\newtheorem{theorem}{Theorem}[section]
\newtheorem{definition}[theorem]{Definition}
\newtheorem{lemma}[theorem]{Lemma}
\newtheorem{corollary}[theorem]{Corollary}
\newtheorem{proposition}[theorem]{Proposition}
\newtheorem{conjecture}[theorem]{Conjecture}
\theoremstyle{definition}
\title{On linear diameter perfect Lee codes with diameter 6}
\author{Tao Zhang$^{\text{a,}}$\thanks{Email address:  zhant220@163.com.}~ and  Gennian Ge$^{\text{b,}}$\thanks{Email address:  gnge@zju.edu.cn.}\\
\footnotesize $^{\text{a}}$ Zhejiang Lab, Hangzhou 311100, China. \\
\footnotesize $^{\text{b}}$ School of Mathematics Sciences, Capital Normal University, Beijing 100048, China.\\}
\begin{document}

\date{}

\maketitle

\begin{abstract}
In 1968, Golomb and Welch conjectured that there is no perfect Lee codes with radius $r\ge2$ and dimension $n\ge3$. A diameter perfect
code is a  natural generalization of the perfect code. In 2011, Etzion (IEEE Trans. Inform. Theory, 57(11): 7473--7481, 2011) proposed the following problem: Are there diameter perfect Lee (DPL, for short) codes with diameter greater than four besides the $DPL(3,6)$ code? Later, Horak and AlBdaiwi (IEEE Trans. Inform. Theory, 58(8): 5490--5499, 2012) conjectured that there are no $DPL(n,d)$ codes for dimension $n\ge3$ and diameter $d>4$ except for $(n,d)=(3,6)$. In this paper, we give a counterexample to this conjecture. Moreover, we prove that for $n\ge3$, there is a linear $DPL(n,6)$ code if and only if $n=3,11$.

\medskip

\noindent {{\it Keywords\/}: Diameter perfect code, Lee metric, group ring, lattice tiling.}

\smallskip

\noindent {{\it AMS subject classifications\/}: 52C22, 11H31, 11H71.}
\end{abstract}

\section{Introduction}
The Lee metric was first introduced in \cite{L58,U57} for transmission of signals. For two vectors $u=(u_1,\dots,u_n), v=(v_1,\dots,v_n)\in\mathbb{Z}^{n}$, their \emph{Lee distance} is defined by
\[d_{L}(u,v)=\sum_{i=1}^{n}|u_i-v_i|.\]
A \emph{Lee code} $C$ in $\mathbb{Z}^{n}$ is a subset of $\mathbb{Z}^{n}$ endowed with Lee distance. If $C$ is also a subgroup of $\mathbb{Z}^{n}$, then $C$ is called a \emph{linear Lee code}. Lee codes have been well studied due to many practical applications, for example, constrained and partial-response channel \cite{RS1994}, flash memory \cite{JSB2010,S2012}, interleaving schemes \cite{BBV1998} and multidimensional burst error correction \cite{EY2009}.

Let $S_{n,r}(u)$ denote the \emph{Lee sphere} of radius $r$ centered at $u$, that is,
\[S_{n,r}(u)=\{v\in\mathbb{Z}^{n}:\ d_{L}(u,v)\le r\}.\]
The \emph{packing radius} $r(C)$ of a Lee code $C$ is the greatest integer $r$ such that $S_{n,r}(u)\cap S_{n,r}(v)=\emptyset$ holds for all $u,v\in C$. The \emph{covering radius} $R(C)$ of a Lee code $C$ is the smallest integer $r$ such that $\cup_{u\in C}S_{n,r}(u)=\mathbb{Z}^{n}$. A Lee code $C\subseteq\mathbb{Z}^{n}$ is called \emph{perfect} if $r(C)=R(C)=r$, and then $C$ is called a $PL(n,r)$ code.

Another way to define perfect codes is by tiling.
For a subset $V\subset\mathbb{Z}^{n}$, if there exists $W\subset\mathbb{Z}^{n}$ such that $\mathbb{Z}^{n}=V\bigoplus W$, then we say that $\mathbb{Z}^{n}$ can be \emph{tiled} by $V$. It is easy to see that there is a $PL(n,r)$ code if and only if $\mathbb{Z}^{n}$ can be tiled by $S_{n,r}(0)$. The well-known \emph{Golomb-Welch conjecture} states that
\begin{conjecture}\rm{\cite{GW70}}
For $n\ge3$ and $r\ge2$, there is no $PL(n,r)$ code.
\end{conjecture}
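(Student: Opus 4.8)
The plan is to argue entirely on the tiling side of the equivalence recorded above: a $PL(n,r)$ code exists precisely when $\mathbb{Z}^{n}=S_{n,r}(0)\bigoplus W$ for some $W\subset\mathbb{Z}^{n}$, so it suffices to prove that the Lee sphere admits no such tiling for $n\ge3$, $r\ge2$. A preliminary bookkeeping step records the volume
\[|S_{n,r}(0)|=\sum_{k=0}^{\min(n,r)}2^{k}\binom{n}{k}\binom{r}{k},\]
which is the forced index of the translating set. Since this is exactly the Golomb--Welch conjecture, I do not expect one uniform argument; instead I would split the parameter range into a large-$r$ regime, a small-$r$ regime, and isolate the band where the genuine difficulty sits.

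For large $r$ with $n$ fixed I would exploit the roundness of the cross-polytope. Rescaling, $\tfrac{1}{r}S_{n,r}(0)$ converges in Hausdorff distance to the $\ell_{1}$ unit ball $B$, and a tiling of $\mathbb{Z}^{n}$ by $S_{n,r}(0)$ supplies a lattice of translations of covolume $|S_{n,r}(0)|$; rescaling these lattices by $1/r$ and passing to a convergent subsequence yields a translational tiling of $\mathbb{R}^{n}$ by $B$. But by Minkowski's facet criterion (sharpened by Venkov and McMullen), a convex body tiling $\mathbb{R}^{n}$ by translations must have centrally symmetric facets, whereas the facets of the cross-polytope are $(n-1)$-simplices, which fail central symmetry exactly when $n\ge3$. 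This contradiction, which is the geometric content of the original argument of \cite{GW70}, eliminates all sufficiently large $r$.

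For small and moderate $r$, and in particular for linear codes, I would pass to characters. A linear $PL(n,r)$ code is a sublattice $L\le\mathbb{Z}^{n}$ of index $|S_{n,r}(0)|$ for which $S_{n,r}(0)$ is a transversal of $G=\mathbb{Z}^{n}/L$; with $\phi$ the quotient map this is the group-ring identity $\sum_{s\in S_{n,r}(0)}\phi(s)=\sum_{g\in G}g$ in $\mathbb{Z}[G]$. Evaluating a nontrivial character $\chi$ and setting $\chi_{i}=\chi(\phi(e_{i}))$, the condition $\sum_{s\in S_{n,r}(0)}\chi(\phi(s))=0$ becomes the vanishing of the coefficient of $t^{r}$ in $\frac{1}{1-t}\prod_{i=1}^{n}\frac{1-t^{2}}{(1-t\chi_{i})(1-t\chi_{i}^{-1})}$. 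The plan is to show these coefficients cannot vanish for all nontrivial $\chi$ at once: for each fixed small $r$ one bounds the root-of-unity tuples $(\chi_{1},\dots,\chi_{n})$ that kill the coefficient, and for each fixed small $n$ one runs a finite analysis over the Smith-normal-form shapes of $L$.

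The hard part, and the reason the conjecture is still open, is the intermediate band where $r$ is moderate while $n$ grows. There the limiting step of the second paragraph has no finite-$r$ analogue, so Minkowski's obstruction does not transfer to the discrete sphere; and the character system of the third paragraph becomes underdetermined, since the number of free eigenvalues $\chi_{i}$ grows with $n$ and generic vanishing of a single coefficient can no longer be excluded by counting. Closing this band would need either a non-tiling criterion for $S_{n,r}(0)$ effective uniformly in $(n,r)$, or a structural classification of the admissible lattices $L$. I would therefore deliver a rigorous proof in the two extreme regimes together with a reduction pinning the remaining obstruction to this intermediate band, which is the honest limit of the present approach.
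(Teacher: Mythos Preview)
The statement you are asked to prove is not a theorem in the paper: it is the Golomb--Welch \emph{conjecture}, stated as Conjecture~1.1 and explicitly labeled as such. The paper offers no proof, because none is known; it merely surveys partial results (Post's bound, the small-dimension cases $n\le 6$, and the small-radius results for $r=2,3,4$). So there is no ``paper's own proof'' to compare against.

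Your proposal is honest about this: you split into a large-$r$ regime (the Minkowski/Venkov--McMullen limiting argument, which is indeed the content of \cite{GW70}), a small-$r$/character-theoretic regime, and an intermediate band you explicitly leave open. That is an accurate map of what is known, but it is not a proof of the conjecture, and you say as much in your last paragraph. The genuine gap is exactly the one you name: the large-$r$ argument needs $r\ge r_0(n)$ with $r_0(n)$ growing with $n$, while the character/group-ring methods only handle fixed small $r$ or fixed small $n$; no technique currently bridges the two. So your submission should be read as a correct summary of partial approaches, not as a proof, and in that sense it matches the paper, which also does not (and cannot) prove Conjecture~1.1.
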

In \cite{GW70}, Golomb and Welch showed that there is no $PL(n,r)$ code for $n\ge3$ and $e\ge e_{n}$, where $e_n$ is not specified. Post \cite{P75} proved the nonexistence of $PL(n,r)$ codes for $n\ge6$ and $r\ge\frac{\sqrt{2}}{2}n-\frac{1}{4}(3\sqrt{2}-2)$. Later, Lepist{\"o} \cite{L81} showed that $PL(n,r)$ codes must satisfy $n\ge\frac{(r+2)^2}{2.1}$, where $r\ge285$.

For some small dimensions, Golomb-Welch conjecture has been solved. Gravier et al. \cite{GMP98} settled the dimension 3 case. Dimension 4 was resolved in \cite{S07}, and dimension 5 was proved in \cite{H09E}. Horak \cite{H09} showed the nonexistence of perfect Lee codes for $n=6$ and $r=2$.

Researchers also considered the conjecture with small radius. Under certain conditions, the nonexistence of $PL(n,r)$ (or linear $PL(n,r)$) codes has been proved in \cite{K17,Q20,ZG17,ZZ19} for $r=2,3,4$. In \cite{LZ20}, Leung and Zhou completely solved the existence of linear $PL(n,2)$ codes. In \cite{XZ2022,ZZ2022}, the authors showed that almost perfect linear Lee codes of radius 2 only exist for small dimensions. For more information about Golomb-Welch conjecture, we refer the readers to
a recent survey \cite{HK18}.

A natural generalization of perfect code includes the  diameter perfect code. Let $(V,d)$ be a metric space. Then a set $C\subseteq V$ is a \emph{diameter $d$ code} if $d(u,v)\ge d$ for any $u,v\in C$, and a set $A\subseteq V$ is a \emph{diameter $d$ anticode} if $d(u,v)\le d$ for all $u,v\in A$. In \cite{AAK01}, the authors proved that if $V$ is a distance regular graph, $C$ is a diameter $d$ code and $A$ is a diameter $d-1$ anticode, then $|C||A|\le |V|$. A diameter $d$ code that attains above bound with equality is called a diameter $d$ perfect code. For more about perfect codes (and diameter perfect codes), we refer the readers to \cite{E22}.

However, the above definition cannot be extended to infinite space. In order to define diameter perfect Lee codes, we need the following notations. Let $\mathcal{S}=\{S_i: i\in I\}$ be a family of subsets of $\mathbb{Z}^{n}$. Then a set $T\subset\mathbb{Z}^{n}$ is called a transversal of $\mathcal{S}$ if $|T\cap S_i|=1$ for each $i\in I$, and $T\cap S_i\ne T\cap S_j$ for all $i\ne j$.

\begin{definition}
  Let $C\subseteq\mathbb{Z}^{n}$. Then $C$ is a diameter $d$ perfect Lee code if $C$ is a diameter $d$ code, and there is a tiling $\mathcal{T}=\{W_i: i\in I\}$ of $\mathbb{Z}^{n}$ by the anticode of diameter $d-1$ of maximum size such that $C$ is a transversal of $\mathcal{T}$. The diameter $d$ perfect Lee code in $\mathbb{Z}^{n}$ will be denoted by $DPL(n,d)$.
\end{definition}
For $d$ odd, the anticode of diameter $d-1$ of the maximum size is the Lee sphere $S_{n,\frac{d-1}{2}}$. Hence $DPL(n,d)$ is the same as $PL(n,\frac{d-1}{2})$ for $d$ odd.

For $u,v\in\mathbb{Z}^{n}$ with $d_L(u,v)=1$, the double Lee sphere is defined by
\[DS_{n,r}(u,v)=S_{n,r}(u)\cup S_{n,r}(v).\]
The size of double Lee sphere is \cite{Etzion2003}
\[|DS_{n,r}|=\sum_{i=0}^{\min\{n-1,r\}}2^{i+1}\binom{n-1}{i}\binom{r+1}{i+1}.\]
It was pointed out in \cite{AB08} that, for $d$ even, the anticode of diameter $d-1$ of maximum size is the double Lee sphere $DS_{n,r}$ with $r=\frac{d-2}{2}$.

The following conjecture is an extension of Golomb-Welch conjecture, which was first proposed by Etzion \cite{E11} and formally presented in \cite{HA12}.
\begin{conjecture}
  There is no $DPL(n,d)$ code for $n\ge3$ and $d>4$ with the exception of the pair $(n,d)=(3,6)$.
\end{conjecture}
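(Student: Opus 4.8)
\medskip

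The plan is to attack the conjecture by splitting on the parity of $d$ and then concentrating the work on the one genuinely new regime. For odd $d>4$ the remark following the definition identifies $DPL(n,d)$ with $PL(n,\tfrac{d-1}{2})$, and since $\tfrac{d-1}{2}\ge 2$ the asserted nonexistence is precisely the Golomb--Welch conjecture in the range $r\ge 2$; I would dispose of this branch by invoking the partial results quoted in the introduction (the small-dimension resolutions of Gravier et al.\ and of Horak for $n=6$, together with the asymptotic bounds of Post and Lepist\"o), while acknowledging that a fully general odd-$d$ statement is equivalent to Golomb--Welch and hence not reachable by elementary means. This isolates the essential case of even $d$, where the maximal diameter-$(d-1)$ anticode is the double Lee sphere $DS_{n,r}$ with $r=\tfrac{d-2}{2}$, so that a code exists if and only if $\mathbb{Z}^{n}$ is tiled by translates of $DS_{n,r}$. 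The smallest new instance is $d=6$, i.e.\ $r=2$, for which a short count gives $|DS_{n,2}|=4n^{2}+2=2(2n^{2}+1)$.

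For the even case I would analyze the tiling through the group-ring/character formalism, first in the linear (lattice) setting. A linear $DPL(n,6)$ code is a sublattice $\Lambda\subseteq\mathbb{Z}^{n}$ of index $4n^{2}+2$ for which the projection $DS_{n,2}\to G:=\mathbb{Z}^{n}/\Lambda$ is a bijection; equivalently, the image of $DS_{n,2}$ in $\mathbb{Z}[G]$ equals $\sum_{g\in G}g$. Applying an arbitrary character $\chi$ of $G$, the tiling is equivalent to $\sum_{x\in DS_{n,2}}\chi(x)=0$ for every nontrivial $\chi$. Fixing a standard basis vector $e_{1}$ as the ``domino axis'' of the double sphere and using inclusion--exclusion, this character sum decomposes as
\[
\sum_{x\in DS_{n,2}}\chi(x)=\bigl(1+\chi(e_{1})\bigr)\sum_{x\in S_{n,2}(0)}\chi(x)-\sum_{x\in S_{n,2}(0)\cap S_{n,2}(e_{1})}\chi(x),
\]
where each piece is an explicit Laurent polynomial in the roots of unity $\chi(e_{j})$. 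Since the vanishing of $\sum_{x\in S_{n,2}(0)}\chi(x)$ at all nontrivial $\chi$ is exactly the tiling condition for a linear $PL(n,2)$ code (solved by Leung and Zhou), the double-sphere problem is a structured perturbation of the perfect-code problem by the $4n$-point overlap term, and the whole question becomes: for which $n$ does some abelian group of order $2(2n^{2}+1)$ admit a character assignment that kills this sum at every nontrivial $\chi$?

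The hard part, and the point where I expect the bare nonexistence strategy to fail, is the joint arithmetic of this vanishing condition and the order $2(2n^{2}+1)$. Because the $\chi(e_{j})$ are roots of unity, each occurrence of the identity above is a cyclotomic-integer equation, and satisfying it across all characters forces divisibility constraints controlled by the factorization of $2n^{2}+1$. Consistent with the abstract, I would anticipate that these constraints are not universally contradictory: when $2n^{2}+1$ has a sufficiently structured factorization --- for instance $2\cdot 11^{2}+1=243=3^{5}$ --- the system becomes solvable, producing a genuine code and refuting the conjecture at $(n,d)=(11,6)$. The realistic deliverable is therefore not a proof of the conjecture but a classification of the admissible $n$, and the principal obstacles are twofold: first, the number-theoretic step of proving the cyclotomic conditions are unsatisfiable for every $n\notin\{3,11\}$ yet satisfiable for $n=11$; and second, removing the linearity hypothesis, since absent a reduction from arbitrary tilings to periodic ones the conjecture for non-linear codes stays strictly harder than the linear characterization.
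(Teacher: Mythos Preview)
You correctly recognize that this is a conjecture the paper \emph{disproves} rather than proves, and you correctly locate the counterexample at $(n,d)=(11,6)$ via $2\cdot 11^{2}+1=3^{5}$. That diagnosis is right. But your route to the linear $d=6$ classification diverges from the paper at the first substantive step and is too vague thereafter to be a proof strategy.

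The paper does not use your additive inclusion--exclusion for $DS_{n,2}$. Instead it notices that in the group ring of $G$ the double sphere factors \emph{multiplicatively} as $(e+a_{1})\cdot B$ with $|B|=2n^{2}+1$; since $|G|=2(2n^{2}+1)$ with coprime factors, one quotients by the $\mathbb{Z}_{2}$ summand and works entirely in an odd-order group $H$ of size $2n^{2}+1$. The tiling condition then becomes the single equation $T^{2}=2H-T^{(2)}+(2n-2)e$ for a symmetric $2n$-subset $T\subset H$. Your decomposition keeps a $4n$-point correction term and stays in the full group of order $4n^{2}+2$; it does not obviously deliver this reduction, and the passage from ``character sums vanish'' to an equation in an odd-order group is the first real gap.

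The larger gap is the nonexistence side. You gesture at ``cyclotomic-integer equations'' and ``divisibility constraints controlled by the factorization of $2n^{2}+1$,'' but the paper's argument is not algebraic-number-theoretic at all. It is a detailed combinatorial analysis of the coefficient level sets of $TT^{(2)}$, $TT^{(4)}$, and $TT^{(5)}$ taken modulo $3$ and modulo $5$, split into cases by $n\bmod 3$ and then $n\bmod 15$, each subcase settled by explicit inequalities among the $|X_{i}|$, $|Y_{i}|$, $|Z_{i}|$. The factorization of $2n^{2}+1$ plays no role in these arguments; the sole place it enters is the residual case where $T=T^{(2)}$ forces $T$ to be a $(2n^{2}+1,2n,1,2)$ partial difference set, and Ma's divisor condition then pins $n$ to $11$, with the explicit code realized as the $11$th-power cosets in $(\mathbb{F}_{3^{5}},+)$. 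None of this machinery is visible from your plan.
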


In this paper, we give a counterexample to this conjecture. Moreover, we prove the following result.
\begin{theorem}\label{mainthm}
For $n\ge3$, there is a linear $DPL(n,6)$ code if and only if $n=3,11$.
\end{theorem}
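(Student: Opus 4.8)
The plan is to recast a linear $DPL(n,6)$ code as a group‑ring identity. Since $d=6$ is even, it follows from the anticode/tiling description recalled above that such a code amounts to a surjective homomorphism $\pi\colon\mathbb{Z}^{n}\to G$ onto a finite abelian group $G$ with $|G|=|DS_{n,2}|=4n^{2}+2$ that maps the double Lee sphere $DS_{n,2}(0,e_{1})=S_{n,2}(0)\cup(e_{1}+S_{n,2}(0))$ bijectively onto $G$. Writing $g_{i}=\pi(e_{i})$ and $T_{i}=g_{i}+g_{i}^{-1}$, $T=\sum_{i}T_{i}$ in $\mathbb{Z}[G]$, an inclusion–exclusion computation of $\pi_{*}\mathbf 1_{DS_{n,2}(0,e_{1})}$ (using $|S_{n,2}(0)\cap(e_{1}+S_{n,2}(0))|=4n$) collapses the tiling condition to
\[(1+g_{1})\Bigl[\tfrac{T^{2}+\sum_{i}T_{i}^{2}}{2}+1-2n\Bigr]=G\quad\text{in }\mathbb{Z}[G].\]
Multiplying by $1-g_{1}$ forces $g_{1}$ to have even order, and since $4n^{2}+2=2(2n^{2}+1)$ with $2n^{2}+1$ odd this gives $G\cong\mathbb{Z}_{2}\times H$ with $z\in G$ of order $2$, $|H|=2n^{2}+1$, and $g_{i}=z^{\varepsilon_{i}}h_{i}$ where $\varepsilon_{1}=1$.

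Next I would apply characters. For $\chi\in\widehat G$ nontrivial with $\chi(g_{1})\neq-1$ the identity forces $\chi\bigl(\tfrac{T^{2}+\sum T_{i}^{2}}{2}+1-2n\bigr)=0$; the characters trivial on $\langle z\rangle$ are exactly $1\times\psi$ with $\psi\in\widehat H$, and unwinding the relation for these gives, for all $\psi\neq1$, $\psi(u)^{2}+\psi(\sigma_{2}u)=2n-2$, where $u=\sum_{i}\bigl([h_{i}]+[h_{i}^{-1}]\bigr)\in\mathbb{Z}[H]$ and $\sigma_{2}$ is the automorphism of $H$ (and of $\widehat H$, acting as $\psi\mapsto\psi^{2}$) induced by doubling; equivalently $u^{2}+\sigma_{2}(u)=(2n-2)e+2H$ in $\mathbb{Z}[H]$. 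Reading off coefficients of this identity shows that $D=\{h_{1},\dots,h_{n}\}$ consists of $n$ distinct nonzero elements, no two mutually inverse, with all differences $h_{i}-h_{j}$ $(i\neq j)$ distinct, and — using $|H|-1=2n^{2}=n(n-1)+2\binom{n+1}{2}$ — that $H\setminus\{0\}$ is the disjoint union $(D-D)\uplus(D+D)\uplus\bigl(-(D+D)\bigr)$. The remaining characters (nontrivial on $\langle z\rangle$) impose further conditions which one checks are automatically satisfied when $\varepsilon_{1}=\dots=\varepsilon_{n}=1$. Hence a linear $DPL(n,6)$ code exists if and only if some abelian group $H$ of order $2n^{2}+1$ admits a generating set $D$ with $|D|=n$ realizing that partition of $H\setminus\{0\}$.

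For the positive direction I would take $H=\mathbb{F}_{q}$ (additive group) with $q=2n^{2}+1\in\{19,243\}$ a prime power, and $D=\mu_{n}$, the group of $n$‑th roots of unity in $\mathbb{F}_{q}^{*}$ (legitimate since $n\mid q-1$); then $D\cup(-D)=\mu_{2n}$ is a subgroup of $\mathbb{F}_{q}^{*}$, and the partition identity turns into an identity among Gaussian periods of $\mathbb{Q}(\zeta_{q})$ that can be verified directly (or by a short finite computation). For the negative direction I would exploit that $y_{\psi}:=\psi(D\cup(-D))$ is a real algebraic integer of $\mathbb{Q}(\zeta_{|H|})^{+}$ and that the group‑ring identity reads $y_{\psi^{2}}=(2n-2)-y_{\psi}^{2}$; thus every $y_{\psi}$ is a periodic point of the real quadratic map $P(x)=(2n-2)-x^{2}$ whose orbit length divides $\operatorname{ord}_{\operatorname{ord}\psi}(2)$ and whose degree over $\mathbb{Q}$ divides $\varphi(\operatorname{ord}\psi)/2$, while the trivial character contributes $\sum_{\psi\neq1}y_{\psi}=-2n$ and $\sum_{\psi\neq1}y_{\psi}^{2}=2n(2n^{2}+1)-4n^{2}$. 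Combining these cyclotomic/dynamical constraints with the rigidity of the partition — treated prime by prime in the factorization of $2n^{2}+1$, with the smallest cases handled computationally — should force $n\in\{3,11\}$.

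The hard part is precisely this last classification. The purely numerical necessary conditions — divisibility, and the averaged character identities — hold for infinitely many $n$, and even the sporadic prime‑power values $2n^{2}+1=73,163,289$ (that is, $n=6,9,12$) survive them; so ruling those out, together with every composite $2n^{2}+1$ and every choice of $H$ and $D$, requires a genuine interplay between the Galois action on the values $y_{\psi}$, the periodic‑orbit structure of $P(x)=(2n-2)-x^{2}$, and the combinatorial partition, and this is where the main effort of the proof must lie.
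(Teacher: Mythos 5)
Your reduction is essentially the paper's: the tiling condition collapses to the existence of a subset $T=D\cup(-D)$ of size $2n$ in an abelian group $H$ of order $2n^2+1$ with $T=T^{(-1)}$ and $T^2=2H-T^{(2)}+(2n-2)e$ (Theorem~\ref{groupring}; your bracket $\tfrac{T^2+\sum_iT_i^2}{2}+(1-2n)e$ is exactly the second factor in that proof, and the passage to the odd quotient is the paper's use of Lemma~\ref{directsum}). Your construction for $n=11$, namely $D=\mu_{11}\subset\mathbb{F}_{243}$ so that $T=\mu_{22}$ is the index-11 subgroup of $\mathbb{F}_{243}^{*}$, is also literally the paper's Proposition~\ref{prop8}. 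Up to that point the proposal is sound (modulo routine verification of the partition statement and of the $n=11$ identity, which you only assert).

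The genuine gap is the entire nonexistence half, i.e.\ ruling out every $n\ge4$ with $n\ne11$, every abelian group $H$ of order $2n^2+1$, and every admissible $D$. You acknowledge this yourself: the ``cyclotomic/dynamical'' program (values $y_\psi=\psi(T)$ as periodic points of $P(x)=(2n-2)-x^2$, Galois action, power sums from the trivial character) is a heuristic, not an argument; you even note that $n=6,9,12$ survive all the numerical conditions you state, and you offer no mechanism that actually eliminates them, let alone the infinitely many $n$ with $2n^2+1$ composite and $H$ non-cyclic. So the proposal does not prove the theorem. For comparison, the paper gets the contradiction not from character values but from multiplicity counting in the group ring: it expands $TT^{(2)}=4nH+(2n-2)T-T^3$ and reduces modulo $3$, using an inclusion--exclusion count of $|TT^{(2)}|$ (Lemma~\ref{lemma2}) to bound the multiplicity distribution $\{|X_i|\}$; this kills $n\equiv1\pmod 3$ outright and $n\equiv2\pmod3$ except for $n=5,8$ and the case $T=T^{(2)}$, which is a $(2n^2+1,2n,1,2)$ partial difference set and is eliminated by Ma's prime-divisor condition (Lemma~\ref{lemma4}), leaving exactly $n=11$. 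For $n\equiv0\pmod3$ the same counting pins down $TT^{(2)}=T+2T^{(3)}+3X_3$ exactly, and then analogous mod-$5$ analyses of $TT^{(4)}$ and $TT^{(5)}$ (split by $n$ mod $15$) give the contradiction; the leftover $n=5,8$ are handled by a separate quadratic congruence on $\mathbb{Z}_{4n^2+2}$ (Proposition~\ref{prop9}). Some argument of this concrete, case-exhausting kind is what your sketch is missing, and without it the classification ``$n\in\{3,11\}$'' is unsupported.
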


This paper is organized as follows. In Section~\ref{prelimi}, we first give some basics about group rings and partial difference sets, then we provide a group ring representation of linear $DPL(n,6)$ codes. In Section~\ref{mainsec}, we prove our main result.
\section{Preliminaries}\label{prelimi}
\subsection{Group ring and partial difference set}
Let $G$ be a finite abelian group (written multiplicatively). The group ring $\mathbb{Z}[G]$ is a free abelian group with a basis $\{g:\ g\in G\}$. For any $A\in\mathbb{Z}[G]$, $A$ can be written as formal sum $A=\sum_{g\in G}a_gg$, where $a_g\in\mathbb{Z}$. For any $A=\sum_{g\in G}a_gg\in\mathbb{Z}[G]$ and $t\in\mathbb{Z}$, we define
\[A^{(t)}=\sum_{g\in G}a_gg^t.\]
Addition, subtraction and multiplication in group rings are defined as:
\[\sum_{g\in G}a_gg\pm\sum_{g\in G}b_gg=\sum_{g\in G}(a_g\pm b_g)g,\]
and
\[\sum_{g\in G}a_gg\sum_{g\in G}b_gg=\sum_{g\in G}(\sum_{h\in G}a_hb_{h^{-1}g})g.\]
For any set $A$ whose elements belong to $G$ ($A$ may be a multiset), we can identify $A$ with the group ring element $\sum_{g\in G}a_gg$, where $a_g$ is the multiplicity of $g$ appearing in $A$.
For a finite abelian group $G$, its character group is denoted by $\widehat{G}$. For any $A=\sum_{g\in G}a_gg\in\mathbb{Z}[G]$ and $\chi\in\widehat{G}$, define
\[\chi(A)=\sum_{g\in G}a_g\chi(g).\]
Group rings are widely used in the study of difference sets and related topics, see \cite{P95,S02} and the references therein.

We also need some results about partial difference sets. We first give its definition.
\begin{definition}
A $k$-element subset $D$ of a finite group $G$ (written multiplicatively) of order $v$ is called a $(v,k,\lambda,\mu)$-partial difference set in $G$ if the multiset $\{d_1d_{2}^{-1}:\ d_1,d_2\in D,d_1\ne d_2\}$ contains each nonidentity element of $D$ exactly $\lambda$ times and each nonidentity element of $G\backslash D$ exactly $\mu$ times. When $e\not\in D$ and $D^{(-1)}=D$, then $D$ is called a regular partial difference set.
\end{definition}
In the language of group ring, $D$ is a $(v,k,\lambda,\mu)$ partial difference set in $G$ if and only if
\[DD^{(-1)}=\mu G+(\lambda-\mu)D+\gamma e,\]
where $\gamma=k-\mu$ if $e\notin D$ and $\gamma=k-\lambda$ if $e\in D$.
See \cite{M1994} for the background on partial difference sets. The following result can be found in \cite{M1984}.
\begin{lemma}{\rm{\cite{M1984}}}\label{lemma4}
  If there exists a nontrivial regular $(v,k,\lambda,\mu)$ partial difference set in an abelian group, then $v,\Delta$ and $v^2/\Delta$ must have the same prime divisors, where $\Delta=(\lambda-\mu)^2+4(k-\mu)$.
\end{lemma}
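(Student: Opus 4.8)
The plan is to pass to the group ring $\mathbb{Z}[G]$ and argue through character values. Since $D$ is regular we have $D=D^{(-1)}$, so the defining relation becomes $D^2=\mu G+(\lambda-\mu)D+(k-\mu)e$. The key preprocessing step is to symmetrize by setting $D'=2D-(\lambda-\mu)e$; a direct expansion collapses the middle term and yields the clean identity
\[(D')^2=\Delta\,e+4\mu\,G.\]
Applying a character $\chi\in\widehat{G}$ gives $\chi(D')^2=\Delta+4\mu\,\chi(G)$. For the principal character $\chi_0$ this reads $s^2=\Delta+4\mu v$ with $s:=2k-(\lambda-\mu)$, while for every nonprincipal $\chi$ we have $\chi(G)=0$, hence $\chi(D')=\pm\sqrt{\Delta}$ (so $\Delta\ge 0$, with $\Delta>0$ by nontriviality). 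Writing $n_1,n_2$ for the numbers of nonprincipal characters with value $+\sqrt\Delta$ and $-\sqrt\Delta$, the two relations I exploit throughout are $n_1+n_2=v-1$ and, by inverting the coefficient of $e$ (which is $-(\lambda-\mu)$ since $e\notin D$), $s+(n_1-n_2)\sqrt\Delta=-v(\lambda-\mu)$.

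First I would dispose of the case $\sqrt\Delta\notin\mathbb{Q}$. Since $\chi(D')\in\mathbb{Q}(\zeta_m)$ for $m=\exp(G)$, the subfield $\mathbb{Q}(\sqrt\Delta)$ lies in $\mathbb{Q}(\zeta_m)$, and a Galois automorphism sending $\sqrt\Delta\mapsto-\sqrt\Delta$ permutes the nonprincipal characters while interchanging the two eigenvalues; this forces $n_1=n_2$. Then $v$ is odd and the inversion relation collapses to $s=-v(\lambda-\mu)$, that is $\lambda-\mu=-2k/(v-1)$. As nontriviality gives $0<k<v-1$, the only integer possibility is $\lambda-\mu=-1$, $k=(v-1)/2$; the counting identity then forces $\mu=(v-1)/4$, and $s^2=\Delta+4\mu v$ yields $\Delta=v$. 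In this case $v=\Delta=v^2/\Delta$, so the three quantities trivially share all prime divisors.

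The substantive case is $\sqrt\Delta=\delta\in\mathbb{Z}_{>0}$, where the eigenvalues $\tfrac12((\lambda-\mu)\pm\delta)$ are integers. Here I would compare, prime by prime, the $p$-adic behaviour of $v$ and $\Delta$ using the same relations reduced modulo $p$. From $s^2=\Delta+4\mu v$, if $p\mid v$ then $p\mid\Delta$ is equivalent to $p\mid s$; and from $s\equiv-(n_1-n_2)\delta\pmod p$ one gets $(n_1-n_2)^2\Delta\equiv s^2\equiv\Delta\pmod p$, so $p\mid\Delta$ unless $n_1-n_2\equiv\pm1\pmod p$. A symmetric argument, starting from $p\mid\Delta$ and using the counting identity $k^2=(\lambda-\mu)k+(k-\mu)+\mu v$, handles the reverse implication, and the finitely many exceptional residues are excluded by nontriviality. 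This establishes $p\mid v\iff p\mid\Delta$, i.e. $\mathrm{rad}(v)=\mathrm{rad}(\Delta)$.

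The hard part is upgrading equality of radicals to the full statement involving $v^2/\Delta$: one must also show $\delta\mid v$ and the valuation inequality $v_p(\Delta)<2\,v_p(v)$ for every $p\mid v$, for only then is $v^2/\Delta=(v/\delta)^2$ an integer whose radical again equals $\mathrm{rad}(v)$. The congruences above control residues but not exact valuations, so I expect the decisive tool to be the factorization of $p$ in $\mathbb{Z}[\zeta_m]$: since every nonprincipal character value is $\pm\delta$, localizing $(D')^2=\Delta e+4\mu G$ at each prime ideal above $p$ and tracking how the integer coefficients of $D'$ and $G$ are recovered by inversion should pin $v_p(\Delta)$ against $v_p(v)$. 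Carrying out this local analysis, and separately treating $p=2$ and primes dividing $\mu$, where the self-conjugacy of $p$ modulo $\exp(G)$ enters, is where the real work lies and is the main obstacle.
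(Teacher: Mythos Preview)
The paper does not prove this lemma; it is quoted verbatim from Ma~\cite{M1984} and used as a black box, so there is no in-paper argument to compare against. Your overall framework---completing the square to $D'=2D-(\lambda-\mu)e$ so that $(D')^2=\Delta e+4\mu G$, reading off $\chi(D')=\pm\sqrt\Delta$ on nonprincipal characters, and splitting into the irrational and rational cases---is the standard route and is essentially Ma's. Your treatment of the irrational (conference) case is correct and leads to $\Delta=v$, which settles that branch.

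In the rational case the proposal has genuine gaps beyond the one you flag at the end. Your argument that $p\mid v\Rightarrow p\mid\Delta$ reduces to excluding $n_1-n_2\equiv\pm1\pmod p$, but ``excluded by nontriviality'' is not a proof: combining this with $n_1+n_2=v-1\equiv-1\pmod p$ only yields $\{n_1,n_2\}\equiv\{0,-1\}\pmod p$, which is not obviously impossible. The reverse implication is merely asserted. More seriously, your proposed attack on the valuation bound $v_p(\Delta)<2v_p(v)$ via localization in $\mathbb{Z}[\zeta_m]$ is the wrong tool: since every nonprincipal character value $\chi(D')=\pm\delta$ is already a rational integer, passing to prime ideals above $p$ in the cyclotomic ring gives no information beyond the rational congruences you already have---there is no ramification or splitting phenomenon to exploit. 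Ma's actual argument instead uses the multiplier property $D^{(t)}=D$ for all $t$ coprime to $v$ (an immediate consequence of the rationality of the eigenvalues) and then analyzes how $D$ intersects the cosets of Sylow $p$-subgroups of $G$; it is this structural information about $D$ inside $G$, not cyclotomic arithmetic, that controls the $p$-adic valuations and delivers $\delta\mid v$ together with the shared prime divisors of $v/\delta$ and $\delta$.
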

\subsection{Group ring representation of linear $DPL(n,6)$ codes}
In this paper, we focus on the existence of linear $DPL(n,d)$ codes. The following theorem says that there is a linear $DPL(n,d)$ code if and only if there is a lattice tiling of $\mathbb{Z}^{n}$ by a fixed double Lee sphere $DS_{n,\frac{d-2}{2}}$.
\begin{theorem}{\rm{\cite{HA12}}}
Let $d$ be an even integer and $C$ be a linear $DPL(n,d)$ in $\mathbb{Z}^{n}$. Let $W$ be a double Lee sphere $DS_{n,r}$ with $r=\frac{d-2}{2}$. Then $\mathcal{T}=\{W+c: c\in C\}$ is a lattice tiling of $\mathbb{Z}^{n}$ by double Lee sphere $W$.
\end{theorem}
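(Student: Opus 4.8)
The final statement to prove is the following.

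\begin{theorem}{\rm{\cite{HA12}}}
Let $d$ be an even integer and $C$ be a linear $DPL(n,d)$ in $\mathbb{Z}^{n}$. Let $W$ be a double Lee sphere $DS_{n,r}$ with $r=\frac{d-2}{2}$. Then $\mathcal{T}=\{W+c: c\in C\}$ is a lattice tiling of $\mathbb{Z}^{n}$ by double Lee sphere $W$.
\end{theorem}

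The plan is to unwind the definition of a $DPL(n,d)$ code and convert the transversal condition into a genuine tiling statement, then promote that tiling to a \emph{lattice} tiling using the hypothesis that $C$ is linear (a subgroup of $\mathbb{Z}^{n}$). By definition, since $C$ is a $DPL(n,d)$ code, there is a tiling $\mathcal{T}'=\{W_i: i\in I\}$ of $\mathbb{Z}^{n}$ by anticodes of diameter $d-1$ of maximum size, and $C$ is a transversal of $\mathcal{T}'$. For even $d$, the remark recorded earlier in the excerpt (citing \cite{AB08}) identifies the maximum anticode of diameter $d-1$ as the double Lee sphere $DS_{n,r}$ with $r=\frac{d-2}{2}=\frac{d-1-1}{2}$; hence each $W_i$ is a translate of $W=DS_{n,r}$, say $W_i=W+t_i$. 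First I would fix, for each $i$, the unique code point $c_i\in C\cap W_i$ guaranteed by the transversal property, and show that $W_i=W+c_i$ after choosing the centering convention for $W$ consistently (the double Lee sphere has a distinguished pair of centers $\{u,v\}$ with $d_L(u,v)=1$, and translating so that this pair sits at a fixed base location lets us write $W_i=W+c_i$ with $c_i\in C$). This already yields $\mathcal{T}=\{W+c: c\in C\}$ as a reindexing of the tiling $\mathcal{T}'$, so $\mathcal{T}$ is a tiling of $\mathbb{Z}^{n}$ by $W$.

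The substantive point is that $c\mapsto W+c$ is a bijection between $C$ and the tile set, i.e.\ distinct code points give distinct tiles and \emph{every} tile arises from a code point. That the map is injective is immediate: if $W+c=W+c'$ then $c-c'$ stabilizes the finite set $W$, forcing $c=c'$. Surjectivity onto $\mathcal{T}'$ is exactly the content of ``$C$ is a transversal'': each $W_i$ meets $C$ in a single point $c_i$, and the centering argument of the previous paragraph shows $W_i=W+c_i$. Thus $\mathcal{T}=\mathcal{T}'$ as families, and $\mathbb{Z}^{n}=\bigsqcup_{c\in C}(W+c)$ is a disjoint union covering all of $\mathbb{Z}^{n}$, which is precisely the assertion that $W$ tiles $\mathbb{Z}^{n}$ with translation set $C$.

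It remains to upgrade ``tiling by the translates $\{W+c:c\in C\}$'' to the stronger notion of a \emph{lattice} tiling. Here I would invoke the linearity hypothesis: $C$ is a subgroup of $\mathbb{Z}^{n}$, hence a sublattice. A lattice tiling is, by definition, a tiling whose translation set is a sublattice of $\mathbb{Z}^{n}$; since the translation set of $\mathcal{T}$ is exactly $C$, which is a lattice by assumption, the tiling $\mathcal{T}$ is automatically a lattice tiling. I would state this step explicitly, noting that the disjointness $(W+c)\cap(W+c')=\emptyset$ for $c\ne c'$ combined with $\bigcup_{c\in C}(W+c)=\mathbb{Z}^{n}$ says precisely $\mathbb{Z}^{n}=W\oplus C$ in the sense defined earlier in the excerpt.

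The main obstacle, and the only place requiring genuine care, is the centering bookkeeping in the first paragraph: one must verify that the transversal point $c_i$ can serve as a canonical translate so that $W_i=W+c_i$, rather than merely $W_i=W+t_i$ for some unrelated $t_i$ with $c_i\in W+t_i$. The resolution is that the double Lee sphere is invariant (up to the choice of its center pair) under integer translation, and the transversal condition $|T\cap S_i|=1$ forces the single code point in each tile to be recoverable as its translation parameter once $W$ is fixed with a base center pair; one checks that two translates $W+t$ and $W+c_i$ containing the \emph{same} prescribed relative center coincide. Apart from this normalization, every step is a direct application of the definitions of transversal, tiling, and lattice tiling together with the linearity of $C$.
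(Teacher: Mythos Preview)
The paper does not prove this theorem; it is quoted from \cite{HA12} without proof. So there is no ``paper's own proof'' to compare against, and your attempt must be judged on its own merits.

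Your argument has a genuine gap at precisely the point you flag as ``the main obstacle.'' You want to show that the tiling $\mathcal{T}'=\{W+t_i\}$ guaranteed by the definition coincides with $\{W+c:c\in C\}$, by arguing that the transversal point $c_i\in W+t_i$ satisfies $W+c_i=W+t_i$. This is generally false: $c_i$ is just some point of the tile, and $W+c_i$ is a different translate of $W$ unless $c_i-t_i$ happens to lie in the (trivial) stabilizer of $W$. For a concrete one-dimensional counterexample take $W=\{0,1\}$, $C=2\mathbb{Z}$, and $\mathcal{T}'=\{\{2k-1,2k\}:k\in\mathbb{Z}\}$; then $C$ is a transversal of $\mathcal{T}'$, but $\{W+c:c\in C\}=\{\{2k,2k+1\}\}\neq\mathcal{T}'$. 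Your ``centering'' resolution does not repair this, because you cannot re-choose the base copy of $W$ tile by tile and still speak of translates of a single $W$.

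A correct proof separates packing from covering and uses both hypotheses on $C$. Packing: if $x\in(W+c)\cap(W+c')$ with $c\neq c'$, then $c-c'=w'-w$ for some $w,w'\in W$, so $d_L(c,c')=d_L(w,w')\le d-1$, contradicting that $C$ is a diameter-$d$ code. Covering: since $C$ is a sublattice, let $m=[\mathbb{Z}^n:C]$; the transversal condition applied to the given tiling $\mathcal{T}'$ forces $C$ to have density $1/|W|$ in $\mathbb{Z}^n$, hence $m=|W|$. Disjointness makes the projection $W\to\mathbb{Z}^n/C$ injective, and $|W|=m$ makes it bijective, so $\mathbb{Z}^n=W\oplus C$. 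The lattice property is then immediate from $C$ being a subgroup. Your write-up already contains the disjointness ingredient implicitly (via the diameter condition) but never invokes it, and it lacks any counting or density step to establish covering.
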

The following theorem can be found in \cite{HA12}, which establishes the connection between lattice tilings of $\mathbb{Z}^n$ and finite abelian groups.
\begin{theorem}{\rm{\cite{HA12}}}\label{tilinggroup}
  Let $V$ be a subset of $\mathbb{Z}^{n}$. Then there is a lattice tiling $\mathcal{T}$ of $\mathbb{Z}^n$ by $V$ if and only if there are both an abelian group $G$ of order $|V|$ and a homomorphism $\phi:\mathbb{Z}^n\rightarrow G$ so that the restriction of $\phi$ to $V$ is a bijection.
\end{theorem}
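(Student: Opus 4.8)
The plan is to set up the standard dictionary between lattice tilings and quotients by sublattices. By definition a \emph{lattice tiling} of $\mathbb{Z}^n$ by $V$ is a family $\mathcal{T}=\{V+l:\ l\in L\}$ that partitions $\mathbb{Z}^n$, where $L$ is a sublattice (subgroup) of $\mathbb{Z}^n$. The observation driving both directions is that the partition condition is precisely the statement that $V$ is a complete and irredundant system of coset representatives for $L$ in $\mathbb{Z}^n$, which is in turn precisely the statement that the natural projection $\mathbb{Z}^n\to\mathbb{Z}^n/L$ restricts to a bijection on $V$.

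For the forward direction, suppose $\{V+l:\ l\in L\}$ tiles $\mathbb{Z}^n$. Since every $x\in\mathbb{Z}^n$ is uniquely of the form $v+l$ with $v\in V$ and $l\in L$, each coset of $L$ in $\mathbb{Z}^n$ meets $V$ in exactly one point; hence the cosets are in bijection with $V$, and there are exactly $|V|$ of them. Setting $G=\mathbb{Z}^n/L$ and letting $\phi:\mathbb{Z}^n\to G$ be the quotient homomorphism, $G$ is then an abelian group of order $|V|$, and the coset count says exactly that $\phi|_V$ is a bijection onto $G$.

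For the reverse direction, suppose $G$ is abelian of order $|V|$ and $\phi:\mathbb{Z}^n\to G$ is a homomorphism with $\phi|_V$ a bijection. Put $L=\ker\phi$, a sublattice of $\mathbb{Z}^n$, and verify the two tiling conditions directly. For covering: given $x\in\mathbb{Z}^n$, surjectivity of $\phi|_V$ yields a $v\in V$ with $\phi(v)=\phi(x)$, so $x-v\in L$ and $x\in V+L$. For uniqueness/disjointness: if $v_1+l_1=v_2+l_2$ with $v_i\in V$ and $l_i\in L$, then applying $\phi$ and using $\phi(l_i)=0$ gives $\phi(v_1)=\phi(v_2)$, whence $v_1=v_2$ by injectivity of $\phi|_V$ and then $l_1=l_2$. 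Thus $\{V+l:\ l\in L\}$ is a lattice tiling.

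I do not expect a genuine obstacle: the content is careful bookkeeping, and the crux is the single equivalence between the partition condition and the bijectivity of $\phi|_V$. The one point deserving attention is that $\phi|_V$ being a bijection forces $\phi$ to be surjective, since then $\phi(V)=G$; this guarantees $G\cong\mathbb{Z}^n/\ker\phi$, so that the index satisfies $[\mathbb{Z}^n:L]=|G|=|V|$ and the reverse construction is consistent with the forward one. In the applications of interest $V$ is the finite double Lee sphere $DS_{n,r}$, so $G$ is a finite abelian group of order $|V|$.
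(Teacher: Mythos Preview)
The paper does not supply its own proof of this theorem; it is quoted from \cite{HA12} and used as a black box, so there is nothing to compare your argument against here. Your proof is the standard one and is correct: in the forward direction you take $G=\mathbb{Z}^n/L$ with $\phi$ the quotient map, and in the reverse direction you take $L=\ker\phi$ and verify covering and disjointness directly, noting that $\phi|_V$ bijective forces $\phi$ surjective since $|\phi(V)|=|V|=|G|$.
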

The following lemma can be found in \cite{SS09}.
\begin{lemma}{\rm{\cite[Theorem 3.19]{SS09}}}\label{directsum}
If a finite abelian group $G$ is a direct sum of subgroups $H,K$ of relatively prime orders and $G=A+B$ is a factorization, where $|A|=|H|$, $|B|=|K|$, then $G=H+B=A+K$.
\end{lemma}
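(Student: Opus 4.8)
The plan is to pass to the group ring $\mathbb{Z}[G]$ and reformulate everything through characters. Write $m=|H|=|A|$ and $n=|K|=|B|$, so that $\gcd(m,n)=1$. Identifying each set with its group ring element, the hypothesis that $G=A+B$ is a factorization means exactly that the product $AB$ equals $G$ in $\mathbb{Z}[G]$ (each group element occurring once), and the two desired conclusions $G=H+B$ and $G=A+K$ mean $HB=G$ and $AK=G$. Since $|H||B|=|A||K|=|G|$, the identity $HB=G$ is equivalent to the map $H\times B\to G,\ (h,b)\mapsto hb$ being injective, i.e. to $B$ meeting each coset of $H$ at most once. By character theory this is in turn equivalent to $\chi(B)=0$ for every nontrivial $\chi\in\widehat{G}$ that is trivial on $H$: indeed $\chi(H)=|H|$ when $\chi|_H$ is trivial and $\chi(H)=0$ otherwise, so in $\chi(H)\chi(B)=\chi(G)$ only the characters trivial on $H$ can obstruct. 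Thus the lemma reduces to the symmetric pair of claims that $\chi(B)=0$ for every nontrivial $\chi$ trivial on $H$, and $\chi(A)=0$ for every nontrivial $\chi$ trivial on $K$.

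The factorization supplies the basic relation: applying any nontrivial $\chi\in\widehat{G}$ to $AB=G$ gives $\chi(A)\chi(B)=\chi(G)=0$, so for each nontrivial $\chi$ at least one of $\chi(A),\chi(B)$ vanishes. To prove the first claim it therefore suffices, for a nontrivial $\chi$ trivial on $H$, to rule out the case $\chi(A)=0$, and this is where coprimality enters decisively. Suppose first that $\chi$ has prime order $p$. Being trivial on $H$, $\chi$ factors through $G/H\cong K$, so $p\mid n$ and hence $p\nmid m$. Now $\chi(A)=\sum_{a\in A}\chi(a)$ is a sum of $m$ $p$-th roots of unity, and a vanishing such sum must have length divisible by $p$ (group the roots by residue and use that $1+\zeta+\cdots+\zeta^{p-1}=0$ is the only $\mathbb{Z}$-linear relation among the $p$-th roots of unity), which would force $p\mid m$, a contradiction. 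Hence $\chi(A)\ne0$ and therefore $\chi(B)=0$. The identical argument with the roles of $H,A$ and $K,B$ interchanged disposes of every prime-order character for the second claim.

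It remains to treat characters of composite order, which I expect to be the main obstacle. One cannot finish by simply projecting $AB=G$ to $\mathbb{Z}[K]\cong\mathbb{Z}[G/H]$: that image retains only the characters trivial on $H$, and the resulting multiset identity is genuinely too weak, since it admits nonnegative solutions whose $B$-part is not the full group $K$; so the mixed characters of $G=H\oplus K$, which see the interaction of the two factors, must be used. I would carry this out by induction on $|G|$, the base case ($H$ or $K$ trivial) being immediate. For the inductive step I would fix a prime $p\mid n$, pass to the quotient $G/K_1$ by the subgroup $K_1\le K$ of order $p$ — here $G/K_1=H\oplus(K/K_1)$ is again a coprime direct sum with $\gcd(m,n/p)=1$ — and apply the inductive hypothesis to the induced factorization of $G/K_1$, so that a composite-order character trivial on $H$ either descends to this quotient or is detected after finitely many such steps. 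The delicate point, which I expect to be the crux, is that the images of $A$ and $B$ in $G/K_1$ need not a priori form a simple factorization; verifying that they do (equivalently, that $A$ and $B$ are suitably periodic or injective modulo $K_1$) is exactly what the prime-order equidistribution established above is designed to guarantee. Combining this descent with that equidistribution should yield $\chi(B)=0$ for all nontrivial $\chi$ trivial on $H$, and symmetrically $\chi(A)=0$ for all nontrivial $\chi$ trivial on $K$, which by the first paragraph gives $HB=G$ and $AK=G$ and completes the proof.
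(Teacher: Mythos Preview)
The paper does not prove this lemma at all: it is quoted verbatim from Szab\'o--Sands with a citation and no argument, so there is no ``paper's proof'' to compare against. I will therefore assess your attempt on its own merits.

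Your reduction via characters is correct, and your treatment of characters of prime order is clean and complete: if $\chi$ is nontrivial, trivial on $H$, and of prime order $p$, then $p\mid n$, hence $p\nmid m$, and a vanishing sum of $m$ $p$-th roots of unity would force $p\mid m$; so $\chi(A)\ne 0$ and $\chi(B)=0$.

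The gap is in the composite--order case. Your proposed induction quotients by $K_1\le K$ of order $p$ and hopes that the prime--order equidistribution forces the images $\bar A,\bar B$ in $G/K_1$ to form a genuine factorization. It does not: $\bar A\bar B=p\,(G/K_1)$ is only a \emph{multiset} identity, and to upgrade it you would need either $A$ to inject modulo $K_1$ or $B$ to be a union of $K_1$--cosets. The first is equivalent to $(A-A)\cap K_1=\{0\}$, and the second to $\chi(B)=0$ for every character nontrivial on $K_1$; your prime--order result gives $\chi(B)=0$ only for $\chi$ of order $p$ \emph{and} trivial on $H$, which is neither of these conditions. So the crux you identified is real, and it is not resolved by what you have established. (In fact a sum of $m$ $d$-th roots of unity with $\gcd(m,d)=1$ \emph{can} vanish once $d$ is composite, e.g.\ five sixth roots, so the size argument cannot be pushed further on its own.)

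There is a short way to close the gap that avoids induction on character order entirely. Work in $\mathbb{Z}_p[G]$ and use Frobenius: $A^p\equiv A^{(p)}$ and hence
\[
A^{(p)}B\equiv A^{p}B=A^{p-1}(AB)=A^{p-1}G=|A|^{p-1}G\equiv G\pmod{p},
\]
since $p\nmid|A|$. But $A^{(p)}B$ has nonnegative integer coefficients summing to $|G|$, and they are all $\equiv 1\pmod p$, so they are all exactly $1$: thus $G=A^{(p)}+B$ is again a factorization. Iterating over the prime factors of $n$ (with multiplicity) gives $G=nA+B$; since $nA\subseteq H$ and $|nA|=m=|H|$, this is $G=H+B$. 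The other half is symmetric. This is essentially the argument in Szab\'o--Sands; your prime--order computation is exactly the $d=p$ shadow of this replacement step.
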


Now we translate the existence of linear $DPL(n,6)$ codes into group ring equations.
\begin{theorem}\label{groupring}
  Let $n\ge3$, then there exists a linear $DPL(n,6)$ code if and only if there exists a finite abelian group $H$ of order $2n^2+1$ and $T\subseteq H$ satisfying
  \begin{enumerate}
    \item[(1)] $|T|=2n$,
    \item[(2)] $T=T^{(-1)}$,
    \item[(3)] $T^2=2H-T^{(2)}+(2n-2)e$.
  \end{enumerate}
\end{theorem}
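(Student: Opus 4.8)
The plan is to unwind the chain of reductions already assembled in the excerpt and then turn the combinatorics of the double Lee sphere $DS_{n,2}$ into the three displayed group-ring identities. By the two cited theorems, a linear $DPL(n,6)$ code exists if and only if $\mathbb{Z}^n$ admits a lattice tiling by $W = DS_{n,2}(0,e_1)$, and by Theorem~\ref{tilinggroup} this is equivalent to the existence of an abelian group $G$ of order $|W|$ together with a homomorphism $\phi:\mathbb{Z}^n\to G$ whose restriction to $W$ is a bijection. First I would compute $|W| = |DS_{n,2}|$ from the stated formula: the terms for $i=0,1$ give $2(n-1\text{ choose }0)\binom{3}{1} + 4\binom{n-1}{1}\binom{3}{2} = 6 + 12(n-1)$, wait — recomputing, $|DS_{n,2}| = 2\binom{3}{1} + 4(n-1)\binom{3}{2} = 6 + 12(n-1)$ is wrong dimensionally; the correct evaluation yields $|DS_{n,2}| = 4n^2 + 2$, hence $G$ has order $4n^2+2 = 2(2n^2+1)$. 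Since $2n^2+1$ is odd, $G \cong \mathbb{Z}_2 \oplus H$ for an abelian group $H$ of order $2n^2+1$.

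Next I would set up coordinates on $G$. Write $\phi(e_j) = g_j \in G$ for $j=1,\dots,n$, and decompose $g_j = (\varepsilon_j, t_j)$ with $\varepsilon_j \in \mathbb{Z}_2$ and $t_j \in H$. The double Lee sphere $W = S_{n,2}(0)\cup S_{n,2}(e_1)$ consists of: the origin $0$; the point $e_1$; the vectors $\pm e_j$ and $\pm 2e_j$ and $\pm e_i \pm e_j$ lying in $S_{n,2}(0)$; and their translates by $e_1$ lying in $S_{n,2}(e_1)$. I would observe that $W$ is symmetric under the map $x \mapsto e_1 - x$, which on $G$ becomes $g \mapsto g_1 - g$; requiring $\phi|_W$ injective forces $g_1$ to have order $2$ in $G$, so $g_1 = (1, e)$, i.e. $t_1 = e$ and $\varepsilon_1 = 1$. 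With this normalization, the set $\phi(W) \subseteq \mathbb{Z}_2 \oplus H$ splits cleanly: projecting to $H$, the images of $S_{n,2}(0)$ and $S_{n,2}(e_1)$ coincide, and the $\mathbb{Z}_2$-coordinate records which "copy" a point came from. Define $T = \{t_2,\dots,t_n\} \cup \{t_2^{-1},\dots,t_n^{-1}\} = \{\pm t_j : 2 \le j \le n\}$; this is a symmetric set, giving condition (2), and $|T| = 2n-2$ generically — I would need to argue $0 \notin T$ and the $\pm t_j$ are distinct, which follows from injectivity of $\phi$ on the single-coordinate-vectors $\pm e_j$, and then account for the two extra elements to reach $|T| = 2n$. (The precise bookkeeping of which $2n$ elements constitute $T$ — likely $T = \{\pm t_j : 1 \le j \le n\}$ with $t_1$ something nonzero after a corrected normalization, or $T$ includes images of the $2e_j$ — is exactly the delicate point.)

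Finally, the multiplicative identity $T^2 = 2H - T^{(2)} + (2n-2)e$ should emerge by counting, for each $h \in H$, the number of ways $h = t_i t_j$. I would express $\phi|_W$ bijective as the group-ring statement that $\sum_{w \in W} \phi(w) = G$ in $\mathbb{Z}[G]$, i.e. every element of $G$ is hit exactly once. Writing this out over $\mathbb{Z}_2 \oplus H$ and separating the two $\mathbb{Z}_2$-components gives two equations in $\mathbb{Z}[H]$; one of them, after substituting the contributions of $0$, $e_1$, the $\pm e_j$, the $\pm 2e_j$ (which contribute $T^{(2)}$-type terms since $\phi(2e_j) = g_j^2 = (0, t_j^2)$), and the cross terms $\pm e_i \pm e_j$ (which contribute the products $t_i^{\pm 1}t_j^{\pm 1}$, i.e. essentially $\tfrac12(T^2 - \text{diagonal})$), rearranges into exactly condition (3). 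The converse direction is then a matter of reversing these steps: given $H$ and $T$ with the three properties, reconstruct $G = \mathbb{Z}_2 \oplus H$ and a homomorphism $\phi$ by choosing lifts of the elements of $T$, and verify via the same character/counting computation that $\phi|_W$ is a bijection, invoking Lemma~\ref{directsum} if needed to handle the interaction between the $\mathbb{Z}_2$-part and $H$-part of the factorization.

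The main obstacle I anticipate is the careful combinatorial accounting in the middle step: correctly identifying the $2n$-element set $T$, tracking how the asymmetric point $e_1$ (and its orbit) distributes between the two $\mathbb{Z}_2$-layers, and ensuring that the cross-term products $\{\pm e_i \pm e_j\}$ assemble into $T^2$ without double-counting the "diagonal" contributions $t_j^{\pm 2}$ that instead belong with the $\pm 2e_j$ terms and produce the $-T^{(2)}$ summand. Getting the constant $(2n-2)e$ exactly right — as opposed to $(2n-1)e$ or $2ne$ — will hinge on whether the identity $e$ of $H$ is itself an element of $T$, which ties back to the normalization of $g_1$.
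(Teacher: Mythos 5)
Your overall skeleton follows the paper's route (reduce via Theorem~\ref{tilinggroup} to a homomorphism $\phi$ that is bijective on $W=DS_{n,2}(0,e_1)$ into a group $G$ of order $4n^2+2$, split off the $\mathbb{Z}_2$ part, extract $T$ from images of the generators), but the normalization on which you build everything else is false. Injectivity of $\phi$ on $W$ does not force $g_1=\phi(e_1)$ to have order $2$; it forces the opposite. Since $0,\pm e_1,\pm 2e_1,3e_1$ all lie in $W$, their images $e,g_1^{\pm1},g_1^{\pm2},g_1^{3}$ must be pairwise distinct, so $g_1$ has order at least $6$; in particular, if $g_1$ were the unique involution of $G$ then $\phi(2e_1)=\phi(0)$, contradicting injectivity. (The symmetry $x\mapsto e_1-x$ of $W$ only says that $g\mapsto g_1g^{-1}$ permutes $\phi(W)=G$, which holds for any $g_1$ and gives no constraint.) Consequently the ensuing picture --- that the $H$-projections of $S_{n,2}(0)$ and $S_{n,2}(e_1)$ coincide, that the $\mathbb{Z}_2$-coordinate records the ``copy'', and that $T$ is $\{t_j^{\pm1}:2\le j\le n\}$ plus two unexplained extra elements --- collapses, and the part you yourself flag as ``the delicate point'' (identifying the $2n$-element set $T$ and deriving condition (3)) is precisely the content of the theorem and is not carried out.

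For comparison, the paper needs no normalization of $a_1$ at all: bijectivity of $\phi$ on $W$ is written as the group-ring factorization $G=(e+a_1)\bigl(e+\sum_{i=1}^{n}(a_i^{2}+a_i^{-2})+\sum_{1\le i<j\le n}(a_i+a_i^{-1})(a_j+a_j^{-1})\bigr)$, and Lemma~\ref{directsum}, applied to this factorization with coprime factor sizes $2$ and $2n^2+1$, shows that the second factor maps bijectively onto $H=G/\mathbb{Z}_2$ under the natural map $\varphi$. Setting $T=\sum_{i=1}^{n}(\varphi(a_i)+\varphi(a_i)^{-1})$ --- note that $i=1$ is included --- and squaring then yields (3) at once. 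Your plan to ``separate the two $\mathbb{Z}_2$-components'' cannot substitute for this step: the cross terms $a_ia_j^{\pm1}$ have $\mathbb{Z}_2$-coordinate $\varepsilon_i+\varepsilon_j$, which need not vanish, so the second factor is not contained in $\{0\}\oplus H$, and some form of the coprime-factorization lemma (or an equivalent character argument using that $|H|$ is odd) is needed in the forward direction --- not, as you suggest, merely ``if needed'' in the converse. The converse is in fact the easy direction: given $H$ and $T=\{t_1^{\pm1},\dots,t_n^{\pm1}\}$, take $G=\mathbb{Z}_2\oplus H$ and $a_i=(1,t_i)$, and reverse the computation.
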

\begin{proof}
Let $e_i$, $i=1,2,\dots,n$, be a fixed orthonormal basis of $\mathbb{Z}^{n}$.
  By Theorem~\ref{tilinggroup}, there exists a linear $DPL(n,6)$ code if and only if there are both an abelian group $G$ (written multiplicatively) of order $4n^2+2$ and a homomorphism $\phi:\mathbb{Z}^n\rightarrow G$ such that the restriction of $\phi$ to $DS_{n,2}(0,e_1)$ is a bijection. Since the homomorphism $\phi$ is determined by the values $\phi(e_i)$, $i=1,\dots,n$, then there exists a linear $DPL(n,6)$ code if and only if there exists an $n$-subset $\{a_1,a_2,\dots,a_n\}\subset G$ (let $\phi(e_i)=a_i$) such that
  \begin{align*}
  G=&\{e,a_{1}^{\pm1},a_{1}^{\pm2},a_{1}^{3}\}\cup\{a_{i}^{\pm1},a_{1}^{\pm1}a_{i}^{\pm1},a_{1}^{2}a_{i}^{\pm1},a_{i}^{\pm2},a_1a_{i}^{\pm2}:\ 2\le i\le n\}\\
  &\cup\{a_{i}^{\pm1}a_{j}^{\pm1},a_{1}a_{i}^{\pm1}a_{j}^{\pm1}:\ 2\le i<j\le n\}.
  \end{align*}
  In the language of group ring, the above equation can be written as
  \begin{align*}
  G=&e+a_1+a_{1}^{2}+a_{1}^{-1}+a_{1}^{-2}+a_{1}^{3}+(e+a_1+a_{1}^{-1}+a_{1}^{2})\sum_{i=2}^{n}(a_i+a_{i}^{-1})+(e+a_1)\sum_{i=2}^{n}(a_{i}^{2}+a_{i}^{-2})\\
  &+(e+a_1)\sum_{2\le i<j\le n}(a_i+a_{i}^{-1})(a_j+a_{j}^{-1})\\
  =&(e+a_1)(e+a_{1}^{2}+a_{1}^{-2}+(a_1+a_{1}^{-1})\sum_{i=2}^{n}(a_i+a_{i}^{-1})+\sum_{i=2}^{n}(a_{i}^{2}+a_{i}^{-2})+\sum_{2\le i<j\le n}(a_i+a_{i}^{-1})(a_j+a_{j}^{-1}))\\
  =&(e+a_1)(e+\sum_{i=1}^{n}(a_{i}^{2}+a_{i}^{-2})+\sum_{1\le i<j\le n}(a_i+a_{i}^{-1})(a_j+a_{j}^{-1})).
  \end{align*}
  Since $|G|=4n^2+2$ and $\gcd(2,2n^2+1)=1$, then $\mathbb{Z}_{2}$ is a subgroup of $G$. Let $H=G/\mathbb{Z}_{2}$ and $\varphi:G\rightarrow G/\mathbb{Z}_{2}$ be the natural homomorphism, then $|H|=2n^2+1$.
   By Lemma~\ref{directsum}, we have
  \[\varphi(e)+\sum_{i=1}^{n}(\varphi(a_{i})^{2}+\varphi(a_{i})^{-2})+\sum_{1\le i<j\le n}(\varphi(a_i)+\varphi(a_{i})^{-1})(\varphi(a_j)+\varphi(a_{j})^{-1})=H.\]
  Let $T=\sum_{i=1}^{n}(\varphi(a_{i})+\varphi(a_{i})^{-1})$, then we can compute to get that
  \begin{align*}
  T^2=&(\sum_{i=1}^{n}(\varphi(a_{i})+\varphi(a_{i})^{-1}))^2\\
  =&\sum_{i=1}^{n}(\varphi(a_{i})^2+\varphi(a_{i})^{-2})+2\sum_{1\le i<j\le n}(\varphi(a_i)+\varphi(a_{i})^{-1})(\varphi(a_j)+\varphi(a_{j})^{-1})+2n\varphi(e)\\
  =&2H-T^{(2)}+(2n-2)\varphi(e).\qedhere
  \end{align*}
\end{proof}

\begin{lemma}\label{lemma6}
Let $H$ be a finite abelian group of order $2n^2+1$ and $T\subset H$ satisfying conditions in Theorem~\ref{groupring}, then
for any nontrivial character $\chi\in\widehat{H}$, $\chi(T)\ne0$.
\end{lemma}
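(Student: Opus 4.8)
The plan is to argue by contradiction: assume some nontrivial $\chi\in\widehat H$ has $\chi(T)=0$, and push the identity in condition~(3) of Theorem~\ref{groupring} one step further until it clashes with the trivial estimate $|\chi(A)|\le\sum_g|a_g|$.

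First I would apply $\chi$ to $T^2=2H-T^{(2)}+(2n-2)e$. Since $\chi$ is nontrivial, $\chi(H)=0$, and $\chi(e)=1$, so $\chi(T)^2=-\chi(T^{(2)})+(2n-2)$. Under the assumption $\chi(T)=0$ this forces $\chi(T^{(2)})=2n-2$. Next I would iterate the identity. The operator $A\mapsto A^{(2)}$ is multiplicative, so $(T^{(2)})^2=(T^2)^{(2)}$; and because $|H|=2n^2+1$ is odd, $x\mapsto x^2$ permutes $H$, whence $H^{(2)}=H$ and $(T^{(2)})^{(2)}=T^{(4)}$. Applying $(\cdot)^{(2)}$ to condition~(3) therefore gives $(T^{(2)})^2=2H-T^{(4)}+(2n-2)e$ (equivalently, one checks that $T^{(2)}$ again satisfies conditions~(1)--(3)). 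Applying $\chi$ and substituting $\chi(T^{(2)})=2n-2$ yields $(2n-2)^2=-\chi(T^{(4)})+(2n-2)$, i.e. $\chi(T^{(4)})=(2n-2)(3-2n)$.

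To finish, note that $\gcd(4,|H|)=1$, so the elements $\{g^4:g\in T\}$ are pairwise distinct and $T^{(4)}$ is an honest subset of $H$ of size $2n$; hence $\chi(T^{(4)})$ is a sum of $2n$ roots of unity and $|\chi(T^{(4)})|\le 2n$. But $|(2n-2)(3-2n)|=(2n-2)(2n-3)=4n^2-10n+6>2n$ for every $n\ge 3$ (equivalently $2n^2-6n+3>0$, which already holds at $n=3$). This contradiction gives $\chi(T)\ne 0$.

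I do not expect a genuine obstacle here; everything is elementary character arithmetic. The only point that needs care is verifying that $T^{(2)}$ and $T^{(4)}$ are genuine $0/1$ elements of $\mathbb{Z}[H]$, which rests precisely on $|H|$ being odd so that raising to the $2$nd and $4$th powers are bijections of $H$. The single idea is that iterating the defining identity produces $T^{(4)}$, whose character value is forced to be quadratic in $n$ once $\chi(T)=0$, and so overshoots the linear bound $2n$.
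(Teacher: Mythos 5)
Your argument is correct, and it is worth comparing with the paper's. The first step is identical: applying a nontrivial $\chi$ to $T^2=2H-T^{(2)}+(2n-2)e$ and using $\chi(H)=0$, the assumption $\chi(T)=0$ forces $\chi(T^{(2)})=2n-2$. The paper stops at this point and simply declares the value $2n-2$ impossible ``since $|T^{(2)}|=2n$ and $\gcd(2,|H|)=1$''; note that the trivial bound $|\chi(T^{(2)})|\le 2n$ does not by itself exclude $2n-2$, so the paper's concluding sentence is very terse and leaves the actual contradiction implicit. Your extra iteration supplies it in a self-contained way: since $|H|=2n^2+1$ is odd, $g\mapsto g^2$ is an automorphism of $H$, so $A\mapsto A^{(2)}$ is a ring automorphism of $\mathbb{Z}[H]$ fixing $H$ and $e$, hence $T^{(2)}$ again satisfies conditions (1)--(3) of Theorem~\ref{groupring} (a fact the paper itself uses, e.g.\ in the proof of Lemma~\ref{lemma2} and in Subsection~3.3); applying $\chi$ once more gives $\chi(T^{(4)})=(2n-2)(3-2n)$, and since $T^{(4)}$ is a genuine $2n$-subset (as $\gcd(4,|H|)=1$), the triangle inequality $|\chi(T^{(4)})|\le 2n$ fails because $(2n-2)(2n-3)>2n$ for all $n\ge3$. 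In short, you take the same opening step but push the defining identity one iteration further, which buys a fully rigorous elementary contradiction using only the trivial character bound, whereas the paper's one-line conclusion requires the reader to reconstruct essentially this (or some other unstated) argument.
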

\begin{proof}
  If $\chi(T)=0$, by Theorem~\ref{groupring}, we have $\chi(T^{(2)})=2n-2$. This is impossible since $|T^{(2)}|=2n$ and $\gcd(2,|H|)=1$.
\end{proof}

\section{Proof of Theorem~\ref{mainthm}}\label{mainsec}
In this section, we will give an explicit construction of linear $DPL(11,6)$ code and prove that
 for any $n\ge4$, $n\ne11$, there does not exist any subset $T$ in a finite abelian group $H$ satisfying conditions in Theorem~\ref{groupring}.

 We will first study $T^{(2)}T$, by modulo 3, we can get some restrictions on the coefficients of elements of $T^{(2)}T$. This can solve the cases $n\equiv1\pmod{3}$ and $n\equiv2\pmod{3}$ except for $n=5,8,11$. However, we cannot get a contradiction when $n\equiv0\pmod{3}$. Hence we consider $TT^{(4)}\pmod{5}$ and $TT^{(5)}\pmod{5}$. Small dimensions will be solved in the last subsection.

Let $n\ge3$ be an integer. Suppose that $H$ is a finite abelian group with order $2n^2+1$, and $T\subset H$ satisfying conditions in Theorem~\ref{groupring}. We write $T$ as $T=\sum_{i=1}^{n}(a_i+a_{i}^{-1})$, then
\begin{align}\label{eq1}
  H=e+\sum_{i=1}^{n}(a_{i}^{2}+a_{i}^{-2})+\sum_{1\le i<j\le n}(a_i+a_{i}^{-1})(a_j+a_{j}^{-1}).
\end{align}

 We first prove some lemmas.
\begin{lemma}\label{lemma3}
  $T\cap T^{(3)}=\emptyset$.
\end{lemma}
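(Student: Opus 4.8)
The plan is to leverage the tiling identity~(\ref{eq1}). Expanding its right-hand side produces a sum of exactly $2n^2+1$ group-ring monomials, each with coefficient $+1$: the identity element $e$, the $2n$ elements $a_i^{\pm 2}$ with $1\le i\le n$, and the $4\binom{n}{2}$ elements $a_i^{\pm1}a_j^{\pm1}$ with $1\le i<j\le n$. Since this sum equals $H=\sum_{g\in H}g$ and every expanded monomial carries a positive coefficient, these $2n^2+1$ monomials must represent pairwise distinct elements of $H$. I would also record at the outset that each $a_i\ne e$: otherwise the pair $a_i+a_i^{-1}$ would contribute coefficient $2$ at $e$, contradicting $|T|=2n$ from Theorem~\ref{groupring}; and since $|H|=2n^2+1$ is odd, every $a_i$ has odd order strictly greater than $1$.

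Now suppose, for contradiction, that $T\cap T^{(3)}\ne\emptyset$. Then $a_i^{\epsilon}=a_j^{3\delta}$ for some indices $i,j$ and signs $\epsilon,\delta\in\{1,-1\}$; raising both sides to the power $\epsilon$ and setting $\eta=\epsilon\delta$, this becomes $a_i=a_j^{3\eta}$. If $i=j$, then $a_i^{3\eta-1}=e$ with $3\eta-1\in\{2,-4\}$, so the order of $a_i$ divides $\gcd(4,2n^2+1)=1$, forcing $a_i=e$, which is impossible. If $i\ne j$, multiply by $a_j^{-\eta}$ to obtain $a_ia_j^{-\eta}=a_j^{2\eta}$. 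The right-hand side $a_j^{2\eta}$ is one of the square-monomials $a_k^{\pm2}$ appearing in~(\ref{eq1}), while the left-hand side $a_ia_j^{-\eta}$, after relabelling so that the smaller of $i,j$ is written first, is one of the four cross-monomials $a_p^{\pm1}a_q^{\pm1}$ with $p<q$ appearing in~(\ref{eq1}). These are two genuinely distinct terms of the formal sum on the right-hand side of~(\ref{eq1}), so the distinctness established in the first step is violated. Hence $T\cap T^{(3)}=\emptyset$.

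The argument is short and the computations are routine; the only points requiring care are the clerical one of relabelling the pair $\{i,j\}$ so the cross-term appears in the exact form occurring in~(\ref{eq1}), and the degenerate case $i=j$, which is dispatched using the oddness of $|H|$. I do not anticipate a genuine obstacle: the entire substance lies in the observation that~(\ref{eq1}) encodes a tiling and therefore forbids any coincidence among its listed monomials.
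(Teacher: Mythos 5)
Your proposal is correct and follows essentially the same route as the paper: the same-index case is ruled out because an element of order dividing $2$ or $4$ in a group of odd order $2n^2+1$ must be trivial, and the cross-index case $a_i=a_j^{3\eta}$ is ruled out by observing that it forces the square monomial $a_j^{2\eta}$ to coincide with a cross monomial $a_ia_j^{-\eta}$, so that element would appear twice in the right-hand side of Equation~(\ref{eq1}), contradicting the tiling identity. Your explicit sign normalization just packages the paper's three cases (and the implicit symmetric ones) into one computation.
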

\begin{proof}
 If there exist $a_i,a_j\in T$ with $j\ne i$ such that $a_i=a_{j}^{3}\in T\cap T^{(3)}$, then $a_{j}^{2}=a_{i}a_{j}^{-1}$. By Equation~(\ref{eq1}), it can be seen that $a_{j}^{2}$ appears twice in $H$, which is a contradiction.

 If there exists $a_{i}\in T$ such that $a_i=a_{i}^{3}\in T\cap T^{(3)}$, then $a_{i}^{2}=e$. Since $\gcd(2,|H|)=1$, then $a_i=e$, which contradicts to $e\notin T$.

 If there exists $a_{i}\in T$ such that $a_i=a_{i}^{-3}\in T\cap T^{(3)}$, then $a_{i}^{4}=e$. Similarly as above, we have $a_i=e$, which is also a contradiction.
\end{proof}
\begin{lemma}\label{lemma1}
  For any $t\ne e$, we have
 \[|\{(t_1,t_2)\in T\times T: t_1t_2=t\}|= \begin{cases}
    1, & \mbox{if } t\in T^{(2)}; \\
    2, & \mbox{if }t\notin T^{(2)}.
  \end{cases}\]
\end{lemma}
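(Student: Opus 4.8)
The plan is to read the desired count directly off the group ring identity in condition~(3) of Theorem~\ref{groupring}. First I would observe that, since $|T|=2n$, the element $T=\sum_{i=1}^{n}(a_i+a_i^{-1})$ is multiplicity-free in $\mathbb{Z}[H]$, i.e.\ $T=\sum_{g\in H}t_g g$ with all $t_g\in\{0,1\}$. By the definition of multiplication in $\mathbb{Z}[H]$, the coefficient of a given $t\in H$ in $T^2$ is $\sum_{gh=t}t_gt_h=|\{(t_1,t_2)\in T\times T:\ t_1t_2=t\}|$, which is exactly the quantity to be evaluated. Now apply condition~(3): $T^2=2H-T^{(2)}+(2n-2)e$, so for $t\ne e$ the coefficient of $t$ in $T^2$ equals $2$ minus the coefficient of $t$ in $T^{(2)}$, the term $(2n-2)e$ being irrelevant since $t\ne e$.

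It remains only to show that $T^{(2)}=\sum_{i=1}^{n}(a_i^{2}+a_i^{-2})$ is multiplicity-free and has coefficient $0$ at $e$, for then the coefficient of $t$ in $T^{(2)}$ is $1$ when $t\in T^{(2)}$ and $0$ otherwise. This follows at once from Equation~(\ref{eq1}): rewrite it as $H=e+T^{(2)}+X$, where $X=\sum_{1\le i<j\le n}(a_i+a_i^{-1})(a_j+a_j^{-1})$ has nonnegative integer coefficients; since every coefficient of $H$ equals $1$ and the three summands on the right all have nonnegative coefficients, each of $e$, $T^{(2)}$, $X$ has all coefficients at most $1$, and the coefficient of $e$ in $T^{(2)}$ (and in $X$) must be $0$. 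Plugging this back in, for $t\ne e$ we obtain $|\{(t_1,t_2)\in T\times T:\ t_1t_2=t\}|=1$ if $t\in T^{(2)}$ and $=2$ if $t\notin T^{(2)}$, which is the assertion.

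I do not anticipate a real obstacle: the argument is essentially a comparison of coefficients on the two sides of a known identity. The only points needing a moment's care are that $T$ and $T^{(2)}$ are genuine subsets of $H$ rather than multisets (the first because $|T|=2n$, the second via the nonnegativity argument above) and that the constant term $(2n-2)e$ drops out precisely because the statement restricts to $t\ne e$. As an alternative to invoking condition~(3), one can expand $T^2=\sum_i(a_i+a_i^{-1})^2+2\sum_{i<j}(a_i+a_i^{-1})(a_j+a_j^{-1})=T^{(2)}+2ne+2\big(H-e-T^{(2)}\big)$ using Equation~(\ref{eq1}) directly, which yields the same coefficient count.
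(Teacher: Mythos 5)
Your proposal is correct and follows essentially the same route as the paper, whose proof is simply a comparison of coefficients in condition (3) of Theorem~\ref{groupring}; you merely spell out the (true and easily verified) facts that $T$ and $T^{(2)}$ are multiplicity-free with $e\notin T^{(2)}$, which the paper leaves implicit. No issues.
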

\begin{proof}
  The proof is directly from Condition (3) of Theorem~\ref{groupring}.
\end{proof}
Since $T^2=2H-T^{(2)}+(2n-2)e$, then
\begin{align}\label{eq4}
TT^{(2)}=4nH+(2n-2)T-T^{3}.
\end{align}
Writing $TT^{(2)}$ as
\begin{align}\label{eq2}
TT^{(2)}=\sum_{i=0}^{M}iX_i,
\end{align}
where $X_i$ $(i=0,1,\dots,M)$ form a partition of group $H$, i.e.,
\begin{align}\label{eq3}
H=\sum_{i=0}^{M}X_i.
\end{align}
Then we have the following lemma.
\begin{lemma}\label{lemma2}
  \begin{enumerate}
    \item [(1)] $\sum_{i=1}^{M}i|X_i|=4n^2$,
    \item [(2)] $\sum_{i=0}^{M}|X_i|=2n^2+1$,
    \item [(3)] $\sum_{i=1}^{M}|X_i|=4n-\beta+\sum_{i\ge3}\frac{(i-1)(i-2)}{2}|X_i|$, where $2\beta=|T\cap T^{(2)}|$.
  \end{enumerate}
\end{lemma}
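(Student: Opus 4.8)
The plan is to exploit the three group-ring identities already in hand: the defining equation $TT^{(2)} = 4nH + (2n-2)T - T^3$ from~(\ref{eq4}), the partition $H = \sum_{i=0}^M X_i$ from~(\ref{eq3}), and the expansion $TT^{(2)} = \sum_{i=0}^M i X_i$ from~(\ref{eq2}). Throughout I will use the augmentation map (evaluation at the trivial character), which sends a group-ring element to the sum of its coefficients; this is a ring homomorphism $\mathbb{Z}[H]\to\mathbb{Z}$.

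For part (1), I apply the augmentation map to~(\ref{eq2}). The left-hand side $TT^{(2)}$ has augmentation $|T|\cdot|T^{(2)}| = (2n)(2n) = 4n^2$, since $|T^{(2)}|=|T|=2n$ (the map $t\mapsto t^2$ is a bijection on $H$ as $\gcd(2,|H|)=1$, so $T^{(2)}$ is still a $2n$-set). The right-hand side has augmentation $\sum_{i=1}^M i|X_i|$ (the $i=0$ term contributes nothing), giving the claimed identity. Part (2) is immediate: it is just the augmentation of~(\ref{eq3}), since $|H| = 2n^2+1$.

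Part (3) is the substantive one, and it should be obtained by counting the support of $TT^{(2)}$, i.e.\ $\sum_{i\ge 1}|X_i|$, in two ways. On one hand this is $|\{(t_1,t_2)\in T\times T^{(2)} : t_1t_2 \text{ takes a given value}\}|$ aggregated over distinct values; more usefully, I would start from the identity $\sum_{i\ge1}|X_i| = \left(\sum_{i\ge1}i|X_i|\right) - \sum_{i\ge2}(i-1)|X_i| = 4n^2 - \sum_{i\ge 2}(i-1)|X_i|$ and rearrange, but the cleaner route is to compare with the product $TT^{(2)}$ viewed through Lemma~\ref{lemma1}. Specifically, I expect the bookkeeping to go: the number of pairs $(t_1,t_2)\in T\times T^{(2)}$ with $t_1t_2 = e$ governs the constant term, and for a nontrivial target $t$, Lemma~\ref{lemma1} (applied to $t_1$ times $t_2 = s^2$, i.e.\ to triples) controls multiplicities. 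The coefficient of $e$ in $TT^{(2)}$ is $|\{(t_1,t_2): t_1 \in T,\ t_2\in T,\ t_1t_2^2 = e\}|$; pairing this against the coefficient of $e$ from the right side of~(\ref{eq4}), namely $4n + (2n-2)[e\in T] - [\text{coeff of }e\text{ in }T^3]$, and noting $e\notin T$ and using Lemma~\ref{lemma3} to control $T^3$, pins down $|X_0|$. Then $\sum_{i\ge1}|X_i| = (2n^2+1) - |X_0|$, and substituting the value of $|X_0|$ in terms of $n$ and $\beta := \tfrac12|T\cap T^{(2)}|$ should yield $4n-\beta$ plus the correction term $\sum_{i\ge3}\frac{(i-1)(i-2)}{2}|X_i|$. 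The factor $\binom{i-1}{2}$ strongly suggests the correction is really an inclusion–exclusion artifact: each point covered with multiplicity $i$ is overcounted when one passes from "number of representing pairs" ($\sum i|X_i|$) to "number of representing \emph{unordered} structures" or to the support, and $\binom{i-1}{2}$ is exactly the discrepancy between $i$, $2(i-1)$, and $1$.

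The main obstacle will be the careful determination of $|X_0|$ — that is, counting solutions of $t_1 t_2^2 = e$ with $t_1,t_2\in T$ — and correctly accounting for the coefficient of $e$ in $T^3$ without double-counting. Since $e\notin T$ (Theorem~\ref{groupring}(1)) and $T=T^{(-1)}$, the relation $t_1 t_2^2 = e$ means $t_1 = t_2^{-2} = (t_2^{-1})^2$, so $X_0$-type contributions correspond to elements of $T$ that are squares of elements of $T^{-1}=T$, i.e.\ to $T\cap T^{(2)}$, which is where the quantity $\beta$ enters; the constraint $T^{(-1)}=T$ ensures these come in pairs $\{t,t^{-1}\}$, explaining the factor $2\beta = |T\cap T^{(2)}|$. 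I would also need Lemma~\ref{lemma3} ($T\cap T^{(3)}=\emptyset$) to rule out degenerate coincidences when extracting the coefficient of $e$ in $T^3 = T\cdot T\cdot T$, since a contribution to that coefficient of the form $t_1t_2t_3=e$ with two of the factors equal forces a relation of the type handled there. Once $|X_0|$ is expressed as a linear function of $n$ and $\beta$ (with a correction supported on $i\ge 3$), part (3) follows by subtraction from part (2).
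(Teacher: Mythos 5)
Parts (1) and (2) of your plan are fine and coincide with the paper's (they are immediate from Equations~(\ref{eq2}) and (\ref{eq3})). The gap is in part (3). You propose to pin down $|X_0|$ by comparing the coefficient of $e$ on the two sides of Equation~(\ref{eq4}). This conflates two different quantities: $|X_0|$ is the number of group elements whose coefficient in $TT^{(2)}$ is \emph{zero} (the complement of the support), not the coefficient of $e$. Worse, the comparison you propose is vacuous: the coefficient of $e$ in $TT^{(2)}$ is $|T\cap T^{(2)}|=2\beta$ (using $T=T^{(-1)}$), while writing $T^{3}=T\cdot T^{2}=T\bigl(2H-T^{(2)}+(2n-2)e\bigr)$ shows the coefficient of $e$ in $T^{3}$ is $4n-2\beta$, so the coefficient of $e$ on the right of (\ref{eq4}) is $4n+0-(4n-2\beta)=2\beta$ as well. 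The identity you would extract is $2\beta=2\beta$; it gives no information about $|X_0|$, and Lemma~\ref{lemma3} is not needed (nor helpful) for it.

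What is actually required --- and what the paper does --- is a count of the support $\sum_{i\ge1}|X_i|=|T^{(2)}T|$ by inclusion--exclusion over the $2n$ translates $b_1T,\dots,b_{2n}T$, where $T^{(2)}=\sum_{i=1}^{2n}b_i$ with $b_i^{-1}=b_{n+i}$. Lemma~\ref{lemma1} gives $|b_iT\cap b_jT|=1$ or $2$ according as $b_ib_j^{-1}\in T^{(2)}$ or not; one must then evaluate $\alpha=|\{(i,j):i<j,\ b_ib_j^{-1}\in T^{(2)}\}|$, and the paper shows $\alpha=2n-\beta$ by observing that, since $|H|=2n^2+1$ is odd, $T^{(2)}$ again satisfies the conditions of Theorem~\ref{groupring}, so that for $t\in T^{(2)}$ the counts $N(t)+N(t^{-1})$ equal $1$ or $2$ according as $t\in T^{(4)}$ or not; finally, an element covered with multiplicity $i\ge3$ contributes $\binom{i}{2}-\binom{i}{1}+\binom{i}{0}=\frac{(i-1)(i-2)}{2}$ to the tail of the inclusion--exclusion, which is exactly the correction term. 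Your intuitions that $\binom{i-1}{2}$ is an inclusion--exclusion artifact and that $\beta$ enters through $T\cap T^{(2)}$ are correct, but the concrete mechanism --- the translate decomposition, the evaluation $\alpha=2n-\beta$, and in particular the use of the fact that $T^{(2)}$ itself satisfies Theorem~\ref{groupring} --- is missing, and the specific step you do spell out would not produce it.
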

\begin{proof}
  The first two equations are directly from Equations~(\ref{eq2}) and (\ref{eq3}). Now we count the size of distinct elements in $T^{(2)}T$.
Let $T^{(2)}=\sum_{i=1}^{2n}b_i$ with $b_{i}^{-1}=b_{n+i}$ for $i=1,2,\dots,n$.
  By Lemma~\ref{lemma1}, we have
   \[|b_iT\cap b_jT|= \begin{cases}
    1, & \mbox{if } b_ib_{j}^{-1}\in T^{(2)}; \\
    2, & \mbox{if }b_ib_{j}^{-1}\notin T^{(2)}.
  \end{cases}\]
  Let $|T\cap T^{(2)}|=2\beta$ and $\alpha=|\{(i,j): 1\le i<j\le 2n, b_ib_{j}^{-1}\in T^{(2)}\}|$. Let $N(t)=|\{(i,j): 1\le i<j\le 2n, b_ib_{j}^{-1}=t\}|$, then $\alpha=\sum_{t\in T^{(2)}}N(t)$. Since $\gcd(2n^2+1,2)=1$, then $T^{(2)}$ also satisfies conditions in Theorem~\ref{groupring}. By the definition of $b_i$ and Lemma~\ref{lemma1}, we have
     \[N(t)+N(t^{-1})= \begin{cases}
    1, & \mbox{if } t\in T^{(2)}\cap T^{(4)}; \\
    2, & \mbox{if } t\in T^{(2)}\backslash T^{(4)}.
  \end{cases}\]
  Hence, $\alpha=|T^{(2)}\backslash T^{(4)}|+\frac{|T^{(2)}\cap T^{(4)}|}{2}=|T\backslash T^{(2)}|+\frac{|T\cap T^{(2)}|}{2}=2n-\beta$.

By the inclusion-exclusion principle, we have
  \begin{align*}
  \sum_{i=1}^{M}|X_i|=&|T^{(2)}T|\\
  =&\sum_{i=1}^{2n}|b_iT|-\sum_{1\le i<j\le 2n}|b_iT\cap b_jT|+\sum_{r\ge3}\sum_{1\le i_1<\dots<i_r\le 2n}(-1)^{r-1}|b_{i_1}T\cap\dots\cap b_{i_r}T|\\
  =&4n^2-\alpha-2(\binom{2n}{2}-\alpha)+\sum_{r\ge3}\sum_{1\le i_1<\dots<i_r\le 2n}(-1)^{r-1}|b_{i_1}T\cap\dots\cap b_{i_r}T|.
  \end{align*}
  Suppose that $g\in b_{i_1}T\cap\dots\cap b_{i_r}T$ for some $r\ge 3$, then $g\in X_i$ for some $i\ge3$. The contribution for $g$ in the sum $\sum_{r\ge3}\sum_{1\le i_1<\dots<i_r\le 2n}(-1)^{r-1}|b_{i_1}T\cap\dots\cap b_{i_r}T|$ is
  \[(-1)^{3-1}\binom{i}{3}+(-1)^{4-1}\binom{i}{4}+\cdots=\binom{i}{2}-\binom{i}{1}+\binom{i}{0}=\frac{(i-1)(i-2)}{2}.\]
  Hence, we have
  \begin{align*}
  \sum_{i=1}^{M}|X_i|=&4n^2-\alpha-2(\binom{2n}{2}-\alpha)+\sum_{r\ge3}\sum_{1\le i_1<\dots<i_r\le 2n}(-1)^{r-1}|b_{i_1}T\cap\dots\cap b_{i_r}T|\\
  =&4n^2-\alpha-2(\binom{2n}{2}-\alpha)+\sum_{i\ge3}|X_i|\frac{(i-1)(i-2)}{2}\\
  =&4n-\beta+\sum_{i\ge3}\frac{(i-1)(i-2)}{2}|X_i|.\qedhere
  \end{align*}
\end{proof}
Now we divide our discussion into three cases.

\subsection{$n\equiv1\pmod{3}$}
\begin{proposition}\label{prop1}
  Theorem~\ref{mainthm} holds for $n\equiv1\pmod{3}$.
\end{proposition}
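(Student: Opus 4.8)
The plan is to exploit that $n\equiv 1\pmod 3$ forces $3\mid 2n^2+1=|H|$, so that the basic identities can be reduced modulo $3$. Suppose for contradiction that an abelian group $H$ of order $2n^2+1$ and a set $T\subseteq H$ as in Theorem~\ref{groupring} exist, write $T=\sum_{i=1}^{n}(a_i+a_i^{-1})$, and start from identity (\ref{eq4}), namely $TT^{(2)}=4nH+(2n-2)T-T^{3}$. Because $n\equiv 1\pmod 3$ we have $4n\equiv 1$ and $2n-2\equiv 0\pmod 3$, and the Frobenius identity in characteristic $3$ gives $T^{3}=\bigl(\sum_i(a_i+a_i^{-1})\bigr)^{3}\equiv\sum_i(a_i^{3}+a_i^{-3})=T^{(3)}\pmod 3$. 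Hence
\[
TT^{(2)}\equiv H-T^{(3)}\pmod 3 ,
\]
so for each $g\in H$ the coefficient of $g$ in $TT^{(2)}$ is congruent modulo $3$ to $1$ minus the multiplicity of $g$ in $T^{(3)}$.

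From this congruence I would read off the coefficient structure of $TT^{(2)}$. First, every coefficient of $TT^{(2)}$ is at least $1$: for $g\notin T^{(3)}$ it is $\equiv 1\pmod 3$ and non-negative, and for $g=a_i^{\pm 3}\in T^{(3)}$ it is $\ge 1$ because $a_i^{\pm 2}=a_i^{\pm 3}a_i^{\mp 1}\in T^{(2)}\cap a_i^{\pm 3}T$; hence, in the notation of (\ref{eq2})--(\ref{eq3}), $X_0=\emptyset$. Next I would show that $T^{(3)}$ is multiplicity-free, that is, the $2n$ elements $a_i^{\pm 3}$ are pairwise distinct: a coincidence $a_i^{3\varepsilon}=a_j^{3\delta}$ with $(i,\varepsilon)\ne(j,\delta)$ yields a nonidentity element $a_i^{\varepsilon}a_j^{-\delta}$ of order $3$, and tracing the resulting short relations through (\ref{eq1}) forces some element of $H$ to occur there twice, exactly as in the case analysis of Lemma~\ref{lemma3}. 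Granting multiplicity-freeness, the congruence says that each of the $2n$ elements of $T^{(3)}$ has coefficient $\equiv 0\pmod 3$ in $TT^{(2)}$ (hence, being positive, at least $3$), while each of the other $2n^2+1-2n$ elements has coefficient $\equiv 1\pmod 3$. Writing $x_i=|X_i|$, this gives $x_i=0$ for all $i\equiv 2\pmod 3$, $\sum_{3\mid i}x_i=2n$, and $\sum_{i\equiv 1\,(3)}x_i=2n^2+1-2n$.

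The last step is to combine this with Lemma~\ref{lemma2}. Its parts (1) and (2) read $\sum_{i\ge 1}ix_i=4n^2$ and $\sum_{i\ge 0}x_i=2n^2+1$; subtracting part (3) from part (1) gives $\sum_{i\ge 2}\binom{i}{2}x_i=4n^2-4n+\beta$ with $0\le\beta\le n$; and since a fixed $g\in H$ lies in at most $|T|=2n$ of the translates $b_kT$ whose sum is $TT^{(2)}$, every coefficient is at most $2n$, so $x_i=0$ for $i>2n$. Now each element of $T^{(3)}$ spends at least $3$ of the total weight $4n^2$ and at least $\binom{3}{2}=3$ of the second sum, while each element outside $T^{(3)}$ spends at least $1$ of the weight; moreover part (3) of Lemma~\ref{lemma2} can be rewritten as $x_1=4n-\beta+\sum_{i\ge 3}\tfrac{i(i-3)}{2}x_i$, which forces the number of elements outside $T^{(3)}$ whose coefficient exceeds $1$ to be large. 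Weighing this against the room left by the weight identity and the second-moment identity makes the constraints incompatible; written out explicitly they yield an inequality that fails for every $n\ge 3$ with $n\equiv 1\pmod 3$ --- and every such $n$ is $\ge 4$. This contradiction shows that no linear $DPL(n,6)$ code exists when $n\equiv 1\pmod 3$.

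I expect two main difficulties. The first is establishing that $T^{(3)}$ is multiplicity-free: if it were not, an element of multiplicity $2$ would acquire coefficient $\equiv 2\pmod 3$ and so re-populate the residue-$2$ classes $X_2,X_5,\dots$, spoiling the clean dividend $H-T^{(3)}$; ruling this out calls for a careful analysis of (\ref{eq1}) in the spirit of Lemma~\ref{lemma3}, and in the worst case one must instead carry the at most $n$ doubled elements along through the counting. The second is the quantitative endgame: the three identities of Lemma~\ref{lemma2} together with the coefficient bound $2n$ have to be balanced sharply enough to settle also the smallest dimensions $n=4,7,10$, so crude estimates will not do and one must keep the exact value of $\sum_{i\ge 2}\binom{i}{2}x_i$.
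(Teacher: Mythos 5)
Your overall strategy is the paper's: reduce $TT^{(2)}=4nH+(2n-2)T-T^{3}$ modulo $3$ to $TT^{(2)}\equiv H-T^{(3)}\equiv H+2T^{(3)}\pmod 3$, observe $X_0=\emptyset$, and play the identities of Lemma~\ref{lemma2} against the residue classes of the coefficients. However, two load-bearing steps are not established. The first is your claim that $T^{(3)}$ is multiplicity-free, which you say follows ``exactly as in the case analysis of Lemma~\ref{lemma3}.'' It does not transfer: in Lemma~\ref{lemma3}, the relation $a_i=a_j^{3}$ yields $a_j^{2}=a_ia_j^{-1}$, an equality between two terms that both occur explicitly in the tiling identity~(\ref{eq1}), hence a repetition. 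From $a_i^{3}=a_j^{3}$ one only gets relations such as $a_i^{-1}a_j=a_i^{2}a_j^{-2}$, and $a_i^{2}a_j^{-2}$ is not among the terms of~(\ref{eq1}); likewise an $a_i$ of order $3$ is not visibly excluded (it is a real possibility here, since $3\mid 2n^2+1$ exactly when $n\equiv1\pmod 3$), and it would give $a_i^{3}=a_i^{-3}=e$, i.e.\ multiplicity $2$ at $e$. So this sub-claim is unproven --- and also unnecessary: the paper never proves it. It simply sets $a_1$ (resp.\ $a_2$) to be the number of elements whose multiplicity in $T^{(3)}$ is $\equiv1$ (resp.\ $\equiv2$) $\pmod 3$ and uses only $a_1+a_2\le 2n$. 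Your fallback of ``carrying the at most $n$ doubled elements along'' is the right instinct but is not the correct bookkeeping either, since multiplicities larger than $2$ are a priori possible; the multiplicity-mod-$3$ classification is what makes the count go through regardless.

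The second gap is the quantitative endgame, which is asserted rather than derived, and the weighing you describe does not by itself close. For instance, comparing the second-moment identity $\sum_{i\ge2}\binom{i}{2}|X_i|=4n^2-4n+\beta$ with the weight $\sum_i i|X_i|=4n^2$ in the way you sketch only yields a lower bound of the shape $|X_1|\ge\frac{4n^2+2-\beta}{3}$, which is compatible with $|X_1|\le 2n^2+1-2n$ for every $n\ge3$, so no contradiction results. The combination that works is the paper's: adding (1) and (2) of Lemma~\ref{lemma2} and subtracting (3) gives
\begin{equation*}
\sum_{i\ge1}\frac{i(5-i)}{2}|X_i|=4n^2+4n-\beta ,
\end{equation*}
and since $\frac{i(5-i)}{2}\le 3$, with value $2$ at $i\in\{1,4\}$, $3$ at $i\in\{2,3\}$ and $\le 0$ for $i\ge5$, one bounds the left side by $2(2n^2+1-a_1-a_2)+3(a_1+a_2)\le 4n^2+2+2n$, whence $4n-\beta-2\le 2n$ and, using $\beta\le n$, $n\le2$ --- a contradiction. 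Once you replace the multiplicity-freeness claim by the mod-$3$ multiplicity classes and spell out this specific linear combination, your argument becomes precisely the paper's proof.
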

\begin{proof}
By Equation (\ref{eq4}), we have
\[TT^{(2)}\equiv H+2T^{(3)}\pmod{3}.\]
Then we get
\begin{align*}
&\sum_{i\ge0}|X_{3i}|=a_1,\\
&\sum_{i\ge0}|X_{3i+2}|=a_2,\\
&\sum_{i\ge0}|X_{3i+1}|=2n^2+1-a_1-a_2,
\end{align*}
where
\begin{align*}
&a_1=|\{h\in T^{(3)}:\ h\text{ appears }3i+1 \text{ times in }T^{(3)}\text{ for some }i\}|,\\
&a_2=|\{h\in T^{(3)}:\ h\text{ appears }3i+2 \text{ times in }T^{(3)}\text{ for some }i\}|.
\end{align*}
Hence $a_1+a_2\le 2n$. Since for any $t\in T$, $t^3=t\cdot t^2$, then any element in $T^{(3)}$ appears at least once in $TT^{(2)}$. Note that $\cup_{i\ge0}X_{3i}\subseteq T^{(3)}$, then $|X_0|=0$. By Lemma~\ref{lemma2}, we obtain
\begin{align*}
4n^2+4n-\beta=&\sum_{i=1}^{M}i|X_i|+\sum_{i=1}^{M}|X_i|-\sum_{i=3}^{M}\frac{(i-1)(i-2)}{2}|X_i|\\
=&2|X_1|+3|X_2|+\sum_{i=3}^{M}\frac{5i-i^2}{2}|X_i|\\
\le&2|X_1|+3|X_2|+3|X_3|+2|X_4|\\
\le&2(2n^2+1-a_1-a_2)+3(a_1+a_2)\\
=&4n^2+2+a_1+a_2.
\end{align*}
This leads to $a_1+a_2\ge 4n-\beta-2\ge 3n-2$, which is a contradiction.
\end{proof}

\subsection{$n\equiv2\pmod{3}$}
\begin{proposition}\label{prop2}
  Theorem~\ref{mainthm} holds for $n\equiv2\pmod{3}$ except for $n=5,8,11$.
\end{proposition}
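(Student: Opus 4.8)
The plan is to imitate the proof of Proposition~\ref{prop1}, studying $TT^{(2)}$ modulo $3$, but with more care because the relevant congruence now carries an extra term. Since $n\equiv2\pmod3$ we have $4n\equiv2$, $2n-2\equiv2\pmod3$, and $T^3\equiv T^{(3)}\pmod3$, so (\ref{eq4}) gives
\[TT^{(2)}\equiv 2H+2T+2T^{(3)}\pmod3 .\]
The summand $2T$, absent in Proposition~\ref{prop1}, appears because $3\mid 2n^2+1$ exactly when $n\not\equiv0\pmod3$. I would then sort the elements of $H$ by the residue modulo $3$ of their coefficient in $TT^{(2)}$. Let $m_3(h)$ denote the multiplicity of $h$ in $T^{(3)}$; by Lemma~\ref{lemma3}, $h\in T$ forces $m_3(h)=0$, so the coefficient of $h$ is $\equiv 2+2[h\in T]+2m_3(h)\pmod3$. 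Hence it is $\equiv1\pmod3$ iff $h\in T$ or $m_3(h)\equiv1$; $\equiv0\pmod3$ iff $m_3(h)\equiv2$; and $\equiv2\pmod3$ otherwise. Setting $a_1=|\{h:m_3(h)\equiv1\pmod3\}|$ and $a_2=|\{h:m_3(h)\equiv2\pmod3\}|$, so that $a_1+2a_2\le|T^{(3)}|=2n$, the three residue classes have sizes $a_2$, $2n+a_1$ and $2n^2+1-2n-a_1-a_2$. As before $|X_0|=0$: a coefficient divisible by $3$ forces $m_3(h)\ge2$, hence $h\in T^{(3)}$, and $a_i^{\pm3}=a_i^{\pm1}\cdot a_i^{\pm2}$ shows such an $h$ occurs in $TT^{(2)}$.

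Feeding this into Lemma~\ref{lemma2} (using $|X_0|=0$) gives $\sum_{i\ge1}i|X_i|=4n^2$, $\sum_{i\ge1}|X_i|=2n^2+1$, and $\sum_{i\ge3}\binom{i-1}{2}|X_i|=2n^2+1-4n+\beta$ with $2\beta=|T\cap T^{(2)}|$, or equivalently $4n^2+4n-\beta=\sum_{i\ge1}\tfrac{i(5-i)}{2}|X_i|$. Here the counting of Proposition~\ref{prop1} no longer closes: the dominant residue class is the one containing $X_2$, whose weight $\tfrac{2(5-2)}{2}=3$ is the \emph{largest} of the $\tfrac{i(5-i)}{2}$, so bounding the big class below yields nothing. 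Instead I would use that the coefficient of any $h$ in $TT^{(2)}$ equals $|T\cap hT^{(2)}|\le|T|=2n$, so every nonzero $X_i$ has $i\le 2n$; combined with $\sum_{i\ge3}(i-2)|X_i|=|X_1|-2$, the bound $|X_1|\le 2n+a_1$ coming from its residue class, and the inequality $\binom{i-1}{2}\le\frac{M-1}{2}(i-2)$ for $3\le i\le M:=\max\{i:X_i\ne0\}$, this gives
\[2n^2+1-4n+\beta\ \le\ \frac{M-1}{2}\bigl(|X_1|-2\bigr)\ \le\ \frac{2n-1}{2}\bigl(2n+a_1-2\bigr).\]
This forces $|X_1|\ge 2n-1$, hence $\sum_{k\ge1}|X_{3k+1}|\le a_1+1$; running the same estimate through the other residue classes gives $\sum_{i\ge3}|X_i|=O(n)$ with the $\binom{i-1}{2}$–weighted mass concentrated at indices of order $n$. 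A short convexity computation (note that $|X_j|\ge2$ with two elements of coefficient $2n$ would give $h_1T^{(2)}=h_2T^{(2)}$, hence $h_1=h_2$, a contradiction) then shows the only surviving possibility is a single element $h$ with coefficient exactly $2n$ in $TT^{(2)}$; then $T=hT^{(2)}$, and by Lemma~\ref{lemma6} (which kills the alternative $\chi(T)=0$ in the character computation) one gets $h=e$, i.e.\ $T=T^{(2)}$ and $\beta=n$.

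It remains to rule out $T=T^{(2)}$ for large $n$. There Condition~(3) of Theorem~\ref{groupring} becomes $T^2=2H-T+(2n-2)e$, so $T$ is a nontrivial regular $(2n^2+1,\,2n,\,1,\,2)$ partial difference set. Applying a nontrivial character gives $\chi(T)^2+\chi(T)=2n-2$, so $\chi(T)=\tfrac{-1\pm\sqrt{8n-7}}{2}$; since $T=T^{(-1)}$ this is real, and summing $\chi(T)$ over all characters (which vanishes because $e\notin T$) forces $\sqrt{8n-7}\in\mathbb{Q}$. Lemma~\ref{lemma4} with $\Delta=8n-7$ then requires $2n^2+1$, $8n-7$ and $(2n^2+1)^2/(8n-7)$ to share their prime divisors; since $\gcd(2n^2+1,8n-7)$ divides $81$, these common primes can only be $3$, so $2n^2+1$ and $8n-7$ are both powers of $3$, which (using $8n-7\equiv1\pmod8$) leaves only $n=2$ and $n=11$. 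Altogether $n$ is confined to a short explicit list, whose members with $n\ge3$ and $n\equiv2\pmod3$ are $n=5,8,11$; these, and the construction realising $n=11$, are handled in the final subsection.

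The step I expect to be the main obstacle is keeping the chain of inequalities tight enough. With only the crude bound $M\le 2n$, the product $\frac{M-1}{2}(|X_1|-2)$ barely exceeds $2n^2+1-4n+\beta$, so there is essentially no slack, and a clean argument that eliminates every large $n$ seems to require a descent on $M$ (for each candidate value of $M$, either deduce a partial–difference–set structure killed by Lemma~\ref{lemma4}, or push $M$ lower) together with genuine control on how peaked $T\cdot T^{(2)}$ can be — i.e.\ bounding $|T\cap hT^{(2)}|$ well below $2n$ for all but a handful of $h$, using the tiling identity (\ref{eq1}) and the decomposition $TT^{(2)}=T+T^{(3)}+\sum_{i\ne j}(a_i+a_i^{-1})(a_j^{2}+a_j^{-2})$. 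It is precisely the residual slack in this estimate that leaves $n=5,8,11$ as genuine exceptions.
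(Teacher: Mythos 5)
Your setup matches the paper's: the congruence $TT^{(2)}\equiv 2H+2T+2T^{(3)}\pmod 3$, the resulting sizes of the three residue classes, $|X_0|=0$, the identities $\sum_{i\ge3}(i-2)|X_i|=|X_1|-2$ and $\sum_{i\ge3}\binom{i-1}{2}|X_i|=2n^2+1-4n+\beta$, and the endgame where $T=T^{(2)}$ yields a $(2n^2+1,2n,1,2)$ partial difference set killed by Lemma~\ref{lemma4} except at $n=11$ --- that last step is exactly the paper's final case. But the core of the proposition is missing. You replace the entire middle of the argument by the assertion that ``a short convexity computation then shows the only surviving possibility is a single element $h$ with coefficient exactly $2n$ in $TT^{(2)}$,'' and this does not follow from the counting constraints you have assembled. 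With only $M\le 2n$, $|X_1|\le 2n+a_1$ and Lemma~\ref{lemma2}, there remain many admissible coefficient profiles: for instance $|X_M|\in\{2,3,4\}$ with $M$ of order $n$ satisfies every one of your inequalities, and your observation that two elements of coefficient exactly $2n$ would force $h_1T^{(2)}=h_2T^{(2)}$ rules out only the very top value $i=2n$, not these intermediate peaks. The paper eliminates them by a genuine structural analysis: it first proves the sharper bound $|X_1|\le 4n-2\beta$ (via $t^2\in T\cap T^{(2)}\Rightarrow t^3\notin X_1$), deduces $M\ge n$, and then runs a five-way case distinction on $|X_M|$; the case $|X_M|=2$ alone requires writing the repeated element as $t_1^3=t_2^3=u_1v_1^2=\cdots$, comparing with a second peaked class $X_L$, and invoking Lemma~\ref{lemma1} several times to reach a contradiction --- nothing resembling a convexity estimate. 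You yourself flag in your last paragraph that you do not know how to close this step, which confirms it is a real gap rather than an omitted routine verification.

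A second, smaller defect: the exceptional values $n=5,8$ are supposed to come out of the argument (they survive specific inequalities in the paper's Cases 1, 2 and the $M=2n-2$ subcase of Case 5), but in your write-up they are simply listed at the end as ``members of a short explicit list'' that is never actually produced. As it stands the proposal proves the proposition only under the unproven assumption that the coefficient distribution of $TT^{(2)}$ is concentrated in the single value $2n$, i.e.\ essentially only the $T=T^{(2)}$ endpoint; the remaining configurations, which are where all the work in the paper lies, are not addressed.
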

\begin{proof}
By Equation (\ref{eq4}), we have
\begin{align}\label{eq12}
TT^{(2)}\equiv 2H+2T+2T^{(3)}\pmod{3}.
\end{align}
From above equation, for any $h\in H\backslash(T\cup T^{(3)})$, $h$ appears at least twice in $TT^{(2)}$.
Note that for any $t\in T$, $t=t^{-1}t^2$ and $t^3=tt^2$. Hence, $|X_0|=0$. By Lemma~\ref{lemma3} and Equation~(\ref{eq12}), we get
\begin{align*}
&\sum_{i\ge1}|X_{3i}|=a_1,\\
&\sum_{i\ge0}|X_{3i+1}|=2n+a_2,\\
&\sum_{i\ge0}|X_{3i+2}|=2n^2-2n+1-a_1-a_2,
\end{align*}
where
\begin{align*}
&a_1=|\{h\in T^{(3)}:\ h\text{ appears }3i+2 \text{ times in }T^{(3)}\text{ for some }i\}|,\\
&a_2=|\{h\in T^{(3)}:\ h\text{ appears }3i+1 \text{ times in }T^{(3)}\text{ for some }i\}|.
\end{align*}
Then $a_1+a_2\le 2n$. By Lemma~\ref{lemma2}, we obtain
\begin{equation}\label{eq5}
\begin{split}
\sum_{i=3}^{M}(i-2)|X_i|=&\sum_{i=1}^{M}i|X_i|-2\sum_{i=1}^{M}|X_i|+|X_1|\\
=&4n^2-2(2n^2+1)+|X_1|\\
=&|X_1|-2,
\end{split}
\end{equation}
and
\begin{equation}\label{eq7}
\begin{split}
2n^2+1-4n+\beta=&\sum_{i=3}^{M}\frac{(i-1)(i-2)}{2}|X_i|\\
\le&\frac{M-1}{2}\sum_{i=3}^{M}(i-2)|X_i|\\
=&\frac{M-1}{2}(|X_1|-2).
\end{split}
\end{equation}
 If $s,t\in T$ such that $s=t^2\in T\cap T^{(2)}$, then $s^2=t^4\in T^{(2)}$. Hence $t^3=t\cdot t^2=t^{-1}\cdot s^2\in TT^{(2)}$, and so $t^3\notin X_1$. Thus $|X_1|\le4n-2\beta$. Hence,
\begin{align}\label{eq6}
M\ge\frac{4n^2-8n+2\beta+2}{|X_1|-2}+1\ge\frac{4n^2-8n+2\beta+2}{4n-2\beta-2}+1=\frac{4n^2-4n}{4n-2\beta-2}.
\end{align}
Since $M\in\mathbb{Z}$, then $M\ge n$.

{\bf{Case 1: $|X_{M}|\ge5$.}}

From Equation (\ref{eq5}), we derive that $5n-10\le 5(M-2)\le |X_1|-2\le 4n-2$. This is possible only for $n=5,8$.

{\bf{Case 2: $|X_M|=4.$}}

From Equation (\ref{eq5}), we derive that $4(M-2)\le |X_1|-2\le 4n-2$. Then $M\le n+1$. So $n\le M\le n+1$.

If $M=n+1$, then
\begin{align*}
2n^2-4n+\beta+1=&\sum_{i=3}^{n}\frac{(i-1)(i-2)}{2}|X_i|+4\frac{n(n-1)}{2}\\
=&\sum_{i=3}^{n}\frac{(i-1)(i-2)}{2}|X_i|+2n^2-2n.
\end{align*}
This leads to $\beta\ge2n-1$, which is a contradiction.

If $M=n$, then we have
\[\sum_{i=3}^{n-1}\frac{(i-1)(i-2)}{2}|X_i|=(2n^2-4n+\beta+1)-4\frac{(n-1)(n-2)}{2}=2n+\beta-3,\]
and
\[\sum_{i=3}^{n-1}(i-2)|X_i|=(|X_1|-2)-4(n-2)=|X_1|-4n+6\le6.\]
Let $L$ be the maximum of $3\le i\le n-1$ such that $|X_i|\ne0$, then $L\le8$. This implies that
\[2n+\beta-3=\sum_{i=3}^{n-1}\frac{(i-1)(i-2)}{2}|X_i|\le\frac{L-1}{2}\sum_{i=3}^{n-1}(i-2)|X_i|\le\frac{8-1}{2}6=21.\]
Thus $21\ge 2n+\beta-3\ge 3n-3$. This is possible only for $n=5,8$.

{\bf{Case 3: $|X_M|=3.$}}

Since $X_{M}=X_{M}^{(-1)}$, then $e\in X_{M}$. Note that $e$ appears $|T\cap T^{(2)}|=2\beta$ times, then $M=2\beta$, and $|X_1|\le 4n-M$. By Equation~(\ref{eq6}), we have
\begin{align*}
M\ge\frac{4n^2-8n+2\beta+2}{|X_1|-2}+1\ge\frac{4n^2-4n}{4n-M-2}.
\end{align*}
This leads to $M^2-(4n-2)M+4n^2-4n\le0$, so $2n-2\le M\le2n$, which contradicts to $3(M-2)\le |X_1|-2$.

{\bf{Case 4: $|X_M|=2.$}}

If $X_M\cap T^{(3)}=\emptyset$, suppose that $t=u_1v_{1}^{2}=\cdots=u_Mv_{M}^{2}\in X_M$, where $u_j,v_j\in T$.
By Lemma~\ref{lemma1}, $u_j,v_j$ ($j\in[1,M]$) are pairwise distinct. Then $M\le n$. Combining with Equation (\ref{eq6}), we have $M=n$ and $\beta=1$, $|X_1|=4n-2$. We obtain
\begin{align*}
&\sum_{i=3}^{n-1}|X_i|(i-2)=2n,\\
&\sum_{i=3}^{n-1}|X_i|\frac{(i-1)(i-2)}{2}=n^2-n.
\end{align*}
This implies that
\begin{align*}
n^2-n\le\frac{n-2}{2}\sum_{i=3}^{n-1}|X_i|(i-2)\le(n-2)n,
\end{align*}
which is a contradiction. Therefore $X_M\subset T^{(3)}$.

Suppose that $t_{1}^{3}=\cdots=t_{s}^{3}=u_1v_{1}^{2}=\cdots=u_rv_{r}^{2}\in X_M$, where $t_i,u_j,v_j\in T$, $s\ge1$ and $s+r=M$. By Lemma~\ref{lemma1},  $t_i,u_j,v_j$ ($i\in[1,s], j\in[1,r]$) are pairwise distinct. Then we have $s+2r\le2n$. Since $t_{i}^{3},t_{i}^{-3}\notin X_{1}$ for $i\in[1,s]$, then $|X_1|\le4n-2s$. By Equation~(\ref{eq6}), we have
\begin{align*}
M\ge\frac{4n^2-8n+2\beta+2}{|X_1|-2}+1\ge\frac{4n^2-8n+4}{|X_1|-2}+1.
\end{align*}
If $|X_1|\le4n-2s-2$, then we can get $M\ge n+\frac{s+1}{2}$. This leads to $s+2r\ge2n+1$, which is a contradiction. Note that $e\notin X_1$ and $X_{1}^{(-1)}=X_1$, then $|X_1|$ is even. So $|X_1|=4n-2s$ and $T^{(3)}\subset(X_1\cup X_M)$. By Equation~(\ref{eq6}) again, we obtain
\begin{align*}
M\ge\frac{4n^2-8n+4}{4n-2s-2}+1=n+\frac{s-1}{2}+\frac{s^2-2s+1}{4n-2s-2}.
\end{align*}
This leads to $M\ge n+\frac{s}{2}.$
Recall that $M=r+s$, $s+2r\le2n$, we obtain $s$ is even and $M=n+\frac{s}{2}$. By Equation~(\ref{eq5}), we can get that
\[2(M-2)\le |X_1|-2,\]
which leads to $s\le\frac{2n+2}{3}$. If $s=\frac{2n+2}{3}$, by Equation~(\ref{eq5}), we have $|X_i|=0$ for all $3\le i\le M$. By Lemma~\ref{lemma2}, we get
\[2n^2+1-4n+\beta=(M-1)(M-2)=(n+\frac{n+1}{3}-1)(n+\frac{n+1}{3}-2),\]
which is impossible. Hence $s<\frac{2n+2}{3}$, and there exists $i\in[3,M-1]$ such that $|X_i|\ne0$. Let $L$ be the maximum of $i\in[3,M-1]$ with $|X_i|\ne0$, then $X_L\cap T^{(3)}=\emptyset$ and $L\le n$. By Equations (\ref{eq5}) and (\ref{eq7}), we have
\begin{align}\label{eq16}
\sum_{i=3}^{L}(i-2)|X_i|=(|X_1|-2)-2(M-2)=4n-2s-2-2n-s+4=2n-3s+2,
\end{align}
and
\begin{equation}\label{eq17}
\begin{split}
\sum_{i=3}^{L}\frac{(i-1)(i-2)}{2}|X_i|=&(2n^2+1-4n+\beta)-2\frac{(M-1)(M-2)}{2}\\
=&n^2-(1+s)n-\frac{s^2}{4}+\frac{3s}{2}+\beta-1\\
\ge&n^2-(1+s)n-\frac{s^2}{4}+\frac{3s}{2}.
\end{split}
\end{equation}
This implies that
\begin{align*}
n^2-(1+s)n-\frac{s^2}{4}+\frac{3s}{2}\le&\sum_{i=3}^{L}\frac{(i-1)(i-2)}{2}|X_i|\\
\le&\frac{L-1}{2}\sum_{i=3}^{L}(i-2)|X_i|\\
\le&\frac{n-1}{2}(2n-3s+2),
\end{align*}
which leads to $s^2-2ns+4n-4\ge0$. Thus $s\le2$, and so $s=2$ and $L=n$. Note that $|X_1|=4n-4\le 4n-2\beta$, then $\beta\le2$. Hence $e\notin X_L$. Since $X_{L}^{(-1)}=X_L$, then $|X_L|\ge2$. By Equation~(\ref{eq16}), we have $|X_L|=2$.

Now we consider the elements in $X_L=X_n$ and $X_M=X_{n+1}$. Suppose that $t_{1}^{3}=t_{2}^{3}=u_1v_{1}^{2}=\cdots=u_{n-1}v_{n-1}^{2}\in X_{n+1}$ and $a_1b_{1}^{2}=\cdots=a_nb_{n}^{2}\in X_n$,
where $t_i,u_j,v_j\in T$ ($i\in[1,2],j\in[1,n-1]$) are pairwise distinct and $a_i,b_i\in T$ ($i\in[1,n]$) are pairwise distinct.

If $\{b_i: i\in[1,n]\}\cap\{b_{i}^{-1}: i\in[1,n]\}\ge2$, without loss of generality, we may assume $b_i=b_{i+2}^{-1}$ for $i=1,2$. From $a_1b_{1}^{2}=a_2b_{2}^{2}$ and $a_{3}^{-1}b_{3}^{-2}=a_{4}^{-1}b_{4}^{-2}$, we have
\begin{align*}
b_{1}^{2}b_{2}^{-2}=b_{3}^{-2}b_{4}^{2}=a_{1}^{-1}a_2=a_{3}a_{4}^{-1}.
\end{align*}
By Lemma~\ref{lemma1}, we get $a_{1}^{-1}=a_3$, then $a_1b_{1}^{2}=a_{3}^{-1}b_{3}^{-2}$, which contradicts to $|X_L|=2$.
Then we may assume that $\{t_1,t_2,v_i: i\in[1,n-1]\}\cap\{b_i: i\in[1,n]\}\ge3$ (otherwise, we replace $\{b_i: i\in[1,n]\}$ by $\{b_{i}^{-1}: i\in[1,n]\}$).
Without loss of generality, we may assume $v_i=b_i$ for $i\in[1,3]$. By $u_1v_{1}^{2}=u_2v_{2}^{2}=u_3v_{3}^{2}$ and $a_1b_{1}^{2}=a_2b_{2}^{2}=a_3b_{3}^{2}$, we obtain
\begin{align*}
&v_{1}^{2}v_{2}^{-2}=b_{1}^{2}b_{2}^{-2}=u_{1}^{-1}u_2=a_{1}^{-1}a_2,\\
&v_{1}^{2}v_{3}^{-2}=b_{1}^{2}b_{3}^{-2}=u_{1}^{-1}u_3=a_{1}^{-1}a_3.
\end{align*}
By Lemma~\ref{lemma1}, we get $a_{1}^{-1}=u_2=u_3$, which is a contradiction.

{\bf{Case 5: $|X_M|=1.$}}

For this case, we have $X_M=\{e\}$, $M$ is even and $M=2\beta$. With a similar discussion as Case 3, we can get $2n-2\le M\le 2n$.

If $M=2n-2$, then $|X_1|\le4n-2\beta=2n+2$. By Equations (\ref{eq5}) and (\ref{eq7}), we have
\begin{align}
&\sum_{i=3}^{M-1}|X_i|(i-2)\le4,\label{eq18}\\
&\sum_{i=3}^{M-1}|X_i|\frac{(i-1)(i-2)}{2}=2n-6.\label{eq19}
\end{align}
Let $L$ be the maximum $i\in[3,M-1]$ such that $|X_i|\ne0$, then $|X_L|\ge2$. By Equation~(\ref{eq18}), we have $L\le4$. From Equation (\ref{eq19}), we can get $2n-6\le\frac{L-1}{2}4\le6$. This is possible only for $n=5$.

If $M=2n$, then $T=T^{(2)}$. We obtain $T^2=2H-T+(2n-2)e$. This means that $T$ is a partial difference set with $v=2n^2+1$, $k=2n$, $\lambda=1$, $\mu=2$ and $\Delta=8n-7$. Note that $\gcd(2n^2+1,8n-7)=\gcd(81,8n-7)$, by Lemma~\ref{lemma4}, we have $2n^2+1=3^a$ and $8n-7=3^{b}$ for some integers $a,b$. This is possible only for $n=11$.
\end{proof}

\subsection{$n\equiv0\pmod{3}$}
By Equation (\ref{eq4}), we have
\[TT^{(2)}\equiv T+2T^{(3)}\pmod{3}.\]
Since $\gcd(3,2n^2+1)=1$, then all coefficients of elements in $T^{(3)}$ is 1. By Lemma~\ref{lemma3}, we have
\begin{align*}
&\sum_{i\ge0}|X_{3i+1}|=2n,\\
&\sum_{i\ge0}|X_{3i+2}|=2n,\\
&\sum_{i\ge0}|X_{3i}|=2n^2-4n+1.
\end{align*}
By Lemma~\ref{lemma1}, we obtain that $|X_1|=2n$, $X_1=T$ and $|X_{3i+1}|=0$ for all $i\ge1$.
\begin{lemma}\label{lemma7}
$\beta=0$, i.e.,  $T\cap T^{(2)}=\emptyset$, and $|X_i|=0$ for $i\ge5$.
\end{lemma}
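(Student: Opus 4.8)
The plan is to turn the three identities of Lemma~\ref{lemma2} into a single weighted identity for the multiplicities of $TT^{(2)}$, use the mod‑$3$ information to confine those multiplicities to $\{0,1,2,3\}$, and squeeze out $\beta=0$ along the way. Recall that in the case $n\equiv 0\pmod 3$ we already know $X_1=T$, $|X_1|=2n$, $|X_{3k+1}|=0$ for every $k\ge 1$, and $\sum_{j\ge 0}|X_{3j+2}|=2n$; in particular $|X_2|\le 2n$ and $|X_4|=0$.

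First I would eliminate among parts (1)--(3) of Lemma~\ref{lemma2}. Using $|X_1|=2n$, the combination $(1)-2\cdot(2)$ gives
\[\sum_{i\ge 3}(i-2)|X_i| \;=\; 2|X_0|+2n-2,\]
while substituting (2) into (3) gives
\[\sum_{i\ge 3}\tfrac{(i-1)(i-2)}{2}|X_i| \;=\; 2n^2+1-|X_0|-4n+\beta .\]
Since $(i-1)(i-2)=(i-2)(i-3)+2(i-2)$ and $(i-2)(i-3)$ vanishes at $i=3$, combining these two identities yields the key relation
\[\sum_{i\ge 4}(i-2)(i-3)|X_i| \;=\; 2\bigl(2n^2-6n+3+\beta-3|X_0|\bigr).\]
On the other hand $\sum_{i\ge 3}(i-3)|X_i|=\sum_{i\ge 3}(i-2)|X_i|-\sum_{i\ge 3}|X_i|\ge 0$, and combining this with $\sum_{i\ge 0}|X_i|=2n^2+1$ and $|X_2|\le 2n$ gives $3|X_0|\ge 2n^2-6n+3$. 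Feeding this into the key relation produces
\[0\;\le\;\sum_{i\ge 4}(i-2)(i-3)|X_i|\;\le\;2\beta .\]

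It remains to pin down $\beta$. If $\beta=0$ the key relation forces $|X_i|=0$ for every $i\ge 4$, which is exactly the claim. So suppose $\beta\ge 1$; then $e$ occurs $|T\cap T^{(2)}|=2\beta$ times in $TT^{(2)}$, so $|X_{2\beta}|\ge 1$, and since $|X_{3k+1}|=0$ for $k\ge 1$ and $2\beta\ge 2$ we must have $2\beta\not\equiv 1\pmod 3$, which already excludes $\beta=2$. For $\beta\ge 3$ the single element $e$ contributes $(2\beta-2)(2\beta-3)\ge 12>2\beta$ to the left side of the displayed bound — a contradiction. For $\beta=1$ the bound reads $\sum_{i\ge 4}(i-2)(i-3)|X_i|\le 2$; each admissible term with $i\ge 5$ has coefficient at least $6$ and $|X_4|=0$, so the sum vanishes, whence $3|X_0|=2n^2-6n+4$, which is impossible modulo $3$ since $3\mid n$. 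Hence $\beta=0$, and then $|X_i|=0$ for all $i\ge 4$, in particular for $i\ge 5$.

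The step I expect to be the real obstacle is securing the lower bound $3|X_0|\ge 2n^2-6n+3$, since this is what makes the interval $[0,2\beta]$ for $\sum_{i\ge 4}(i-2)(i-3)|X_i|$ tight enough to be useful; it requires marrying the mod‑$3$ bound $|X_2|\le 2n$ to the $(i-3)$‑weighted identity. Everything after that is bookkeeping. (One can also avoid the case split by noting that $6\mid(i-2)(i-3)$ for every $i\not\equiv 1\pmod 3$, so the left side of the key relation is divisible by $6$; hence $3\mid\beta$, and the growth estimate for $\beta\ge 3$ then finishes the argument directly.)
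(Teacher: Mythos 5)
Your proof is correct and follows essentially the same route as the paper: both combine the counting identities of Lemma~\ref{lemma2} with the mod-$3$ structure of $TT^{(2)}$ and the fact that $e$ lies in $X_{2\beta}$, so that a quadratically weighted count of the high-multiplicity classes is bounded by a quantity linear in $\beta$, which kills $\beta>0$ and then forces the vanishing of $|X_i|$ for large $i$. The only difference is bookkeeping: the paper uses the exact count $\sum_{i\ge0}|X_{3i}|=2n^2-4n+1$ together with $e\notin T\cup T^{(3)}$ to obtain the identity $\beta=\sum_{i\ge1}|X_{3i+2}|\binom{3i+1}{2}+\sum_{i\ge2}|X_{3i}|\frac{3i(3i-3)}{2}$ and the divisibility $3\mid\beta$ in one stroke, whereas you work with the inequality $\sum_{i\ge4}(i-2)(i-3)|X_i|\le2\beta$ and eliminate $\beta=1,2$ by separate short arguments (your parenthetical divisibility remark essentially recovers the paper's shortcut).
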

\begin{proof}
Note that $e\in\cup_{i\ge0}X_{3i}$,
 if $\beta>0$, then $\beta\ge3$ and $e\in X_{2\beta}$. By Lemma~\ref{lemma2}, we have
  \begin{align*}
  \beta=&(2n^2+1-4n+\beta)-(2n^2-4n+1)\\
  =&(|X_0|+\sum_{i\ge3}|X_i|\frac{(i-1)(i-2)}{2})-(\sum_{i\ge0}|X_{3i}|)\\
  =&\sum_{i\ge1}|X_{3i+2}|\binom{3i+1}{2}+\sum_{i\ge2}|X_{3i}|\frac{3i(3i-3)}{2}\\
  \ge&\frac{2\beta(2\beta-3)}{2}.
  \end{align*}
  This leads to $\beta^2-2\beta\le0$, which is a contradiction. Hence $\beta=0$. Substituting $\beta=0$ into above equation, we get $|X_i|=0$ for $i\ge5$.
\end{proof}
From above lemma, we obtain
\begin{align*}
&|X_0|+|X_3|=2n^2-4n+1,\\
&|X_1|=|X_2|=2n,\\
&|X_i|=0\text{ for all }i\ge4.
\end{align*}
By Lemma~\ref{lemma2}, we get $|X_0|=\frac{2}{3}n^2-2n+1$, $|X_3|=\frac{4}{3}n^2-2n$, and
\begin{align}\label{eq11}
TT^{(2)}=T+2T^{(3)}+3X_3.
\end{align}

In order to get a contradiction, we need to investigate $TT^{(4)}$. By Theorem~\ref{groupring}, we have
\begin{align*}
(T^{(2)})^2=&(2H+(2n-2)e-T^2)^2\\
=&(8n^2+8n-4)H+(4n^2-8n+4)e-(4H+(4n-4)e)T^2+T^4\\
=&(8n^2+8n-4)H+(4n^2-8n+4)e-(4H+(4n-4)e)(2H-T^{(2)}+(2n-2)e)+T^4\\
=&(-8n^2+4)H+(-4n^2+8n-4)e+(4n-4)T^{(2)}+T^4.
\end{align*}
On the other hand, $T^{(2)}$ also satisfies conditions in Theorem~\ref{groupring}, then
\[(T^{(2)})^2=2H-T^{(4)}+(2n-2)e.\]
Combining above two equations, we obtain
 \[T^4=(8n^2-2)H+(4n^2-6n+2)e-T^{(4)}-(4n-4)T^{(2)}.\]
  Multiplying $T$ to both sides, we can get
\begin{align}\label{eq8}
TT^{(4)}=(16n^3-4n)H+(4n^2-6n+2)T-(4n-4)TT^{(2)}-T^{5}.
\end{align}
Writing $TT^{(4)}$ as
\begin{align*}
TT^{(4)}=\sum_{i=0}^{N}iY_i,
\end{align*}
where $N\le2n$, and $Y_i$, $i=0,1,\dots,N$ form a partition of group $H$, i.e.,
\begin{align*}
H=\sum_{i=0}^{N}Y_i.
\end{align*}
Then we have the following lemma.
\begin{lemma}\label{lemma5}
  \begin{enumerate}
    \item [(1)] $\sum_{i=0}^{N}|Y_i|=2n^2+1$,
    \item [(2)] $\sum_{i=1}^{N}i|Y_i|=4n^2,$
    \item [(3)] $\sum_{i=1}^{N}|Y_i|=4n-\gamma+\sum_{i\ge3}|Y_i|\frac{(i-1)(i-2)}{2}$, where $|T\cap T^{(4)}|=2\gamma$.
  \end{enumerate}
\end{lemma}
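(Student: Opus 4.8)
The plan is to mimic the proof of Lemma~\ref{lemma2}, systematically replacing $T^{(2)}$ by $T^{(4)}$. Parts (1) and (2) are immediate. Since the $Y_i$ partition $H$ and $|H|=2n^{2}+1$ we get (1); and comparing the sum of all coefficients on the two sides of $TT^{(4)}=\sum_{i}iY_i$ gives (2), because $TT^{(4)}$ is the product of the set $T$ of size $2n$ with the set $T^{(4)}$. Here one first records that $\gcd(2n^{2}+1,4)=1$, so the map $h\mapsto h^{4}$ is an automorphism of $H$; consequently $T^{(4)}$ is again a set of size $2n$, and applying this automorphism to the three identities of Theorem~\ref{groupring} shows that $T^{(4)}$ also satisfies the conditions of Theorem~\ref{groupring}, in particular $(T^{(4)})^{2}=2H-T^{(8)}+(2n-2)e$ with $T^{(8)}$ a set of size $2n$ as well.

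For part (3), I would compute $|TT^{(4)}|=\sum_{i\ge1}|Y_i|$ by inclusion--exclusion applied to the $2n$ translates $c_{1}T,\dots,c_{2n}T$, where $T^{(4)}=\sum_{k=1}^{2n}c_k$ is written with $c_{k}^{-1}=c_{n+k}$. The pairwise intersections are handled exactly as in Lemma~\ref{lemma2}: if $i\ne j$, an element of $c_iT\cap c_jT$ corresponds to a pair $(t_1,t_2)\in T\times T$ with $t_1t_2^{-1}=c_i^{-1}c_j$, and substituting $t_2^{-1}\in T$ (using $T=T^{(-1)}$) and invoking Lemma~\ref{lemma1} gives $|c_iT\cap c_jT|=1$ when $c_ic_j^{-1}\in T^{(2)}$ and $=2$ otherwise. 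As in the earlier proof, each $g\in Y_i$ with $i\ge3$ contributes $\sum_{r\ge3}(-1)^{r-1}\binom{i}{r}=\frac{(i-1)(i-2)}{2}$ to the higher-order terms.

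The only genuinely new computation is the number $\alpha'=|\{(i,j):1\le i<j\le 2n,\ c_ic_j^{-1}\in T^{(2)}\}|$, which plays the role of $\alpha$ in Lemma~\ref{lemma2}. For $t\ne e$, the number of ordered pairs $(i,j)$ with $c_ic_j^{-1}=t$ is the coefficient of $t$ in $T^{(4)}(T^{(4)})^{(-1)}=(T^{(4)})^{2}=2H-T^{(8)}+(2n-2)e$, hence equals $1$ if $t\in T^{(8)}$ and $2$ otherwise. Summing over $t\in T^{(2)}$ and pairing each $t$ with $t^{-1}$ (both lie in $T^{(2)}$, and $t\ne t^{-1}$ since $e\notin T^{(2)}$) yields $\alpha'=|T^{(2)}\setminus T^{(8)}|+\tfrac12|T^{(2)}\cap T^{(8)}|$. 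Now the squaring automorphism of $H$ carries $T$ to $T^{(2)}$ and $T^{(4)}$ to $T^{(8)}$, so $|T^{(2)}\cap T^{(8)}|=|T\cap T^{(4)}|=2\gamma$, and therefore $\alpha'=(2n-2\gamma)+\gamma=2n-\gamma$. Substituting $\sum_{i<j}|c_iT\cap c_jT|=2\binom{2n}{2}-\alpha'$ into $\sum_{i\ge1}|Y_i|=\sum_{k}|c_kT|-\sum_{i<j}|c_iT\cap c_jT|+\sum_{i\ge3}\frac{(i-1)(i-2)}{2}|Y_i|$ and simplifying (with $\sum_k|c_kT|=4n^{2}$) gives exactly $\sum_{i\ge1}|Y_i|=4n-\gamma+\sum_{i\ge3}\frac{(i-1)(i-2)}{2}|Y_i|$, which is (3). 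I expect the main obstacle to be purely bookkeeping: one must keep careful track of the $(-1)$-invariance of $T^{(2)}$, $T^{(4)}$ and $T^{(8)}$, and of the fact that each of these is an honest set, so that the parity counts feeding into $\alpha'$ collapse to the clean value $2n-\gamma$; beyond this there is no substantive new difficulty relative to Lemma~\ref{lemma2}.
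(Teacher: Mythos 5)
Your proposal is correct and matches the paper, whose proof of this lemma is simply the remark that it is analogous to Lemma~\ref{lemma2}: you carry out exactly that adaptation, replacing $T^{(2)}$ by $T^{(4)}$, using that the squaring map is an automorphism of $H$ (so $T^{(4)}$, $T^{(8)}$ again satisfy Theorem~\ref{groupring}), and computing $\alpha'=2n-\gamma$ in place of $\alpha=2n-\beta$. The bookkeeping details you flag (inverse-closedness of $T^{(2)}$, $T^{(4)}$, $T^{(8)}$ and the identity $|T^{(2)}\cap T^{(8)}|=|T\cap T^{(4)}|=2\gamma$) are handled exactly as in the paper's Lemma~\ref{lemma2} argument, so no gap remains.
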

\begin{proof}
  The proof is similar to that for Lemma~\ref{lemma2}.
\end{proof}
\begin{corollary}\label{coro1}
  $2|Y_1|+3|Y_2|+3|Y_3|+2|Y_4|\ge4n^2+3n.$
\end{corollary}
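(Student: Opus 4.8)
The plan is to combine the three identities of Lemma~\ref{lemma5} in exactly the same fashion that Proposition~\ref{prop1} combines the analogous facts about the $X_i$. First I would form the linear combination $\sum_{i=1}^{N} i|Y_i| + \sum_{i=1}^{N}|Y_i| - \sum_{i\ge 3}\frac{(i-1)(i-2)}{2}|Y_i|$. By parts (2) and (3) of Lemma~\ref{lemma5} this equals $4n^2 + 4n - \gamma$, where $2\gamma = |T\cap T^{(4)}|$.

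Next I would read off the coefficient of each $|Y_i|$ appearing in that combination. For $i=1$ it is $1+1=2$, for $i=2$ it is $2+1=3$, and for $i\ge 3$ it is $i + 1 - \frac{(i-1)(i-2)}{2} = \frac{5i-i^2}{2} = \frac{i(5-i)}{2}$. In particular the coefficient equals $3$ for $i=3$, equals $2$ for $i=4$, and is nonpositive for every $i\ge 5$ since then $5-i\le 0$. Hence $2|Y_1|+3|Y_2|+3|Y_3|+2|Y_4| = 4n^2+4n-\gamma - \sum_{i\ge5}\frac{i(5-i)}{2}|Y_i| \ge 4n^2+4n-\gamma$, because the discarded sum is a sum of nonpositive terms.

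Finally I would invoke the trivial bound $\gamma\le n$, which follows from $2\gamma = |T\cap T^{(4)}| \le |T| = 2n$, to obtain $2|Y_1|+3|Y_2|+3|Y_3|+2|Y_4| \ge 4n^2+4n-n = 4n^2+3n$, which is the claim.

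I do not expect any genuine obstacle here: the argument is a verbatim transcription of the estimate used in the proof of Proposition~\ref{prop1}, with $T^{(2)}$, $M$, $X_i$, $\beta$ replaced throughout by $T^{(4)}$, $N$, $Y_i$, $\gamma$. The only points requiring (routine) verification are that the weight $\frac{i(5-i)}{2}$ is indeed nonpositive for all $i\ge 5$, so the truncation to $i\le 4$ only decreases the left-hand side, and that $\gamma\le n$. Everything else is the bookkeeping already carried out in Lemma~\ref{lemma5}.
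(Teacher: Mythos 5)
Your proposal is correct and is essentially identical to the paper's own proof: both form the combination $\sum_i i|Y_i|+\sum_i|Y_i|-\sum_{i\ge3}\frac{(i-1)(i-2)}{2}|Y_i|=4n^2+4n-\gamma$ from Lemma~\ref{lemma5}, observe that the resulting weight $\frac{i(5-i)}{2}$ is nonpositive for $i\ge5$ so the terms with $i\le4$ dominate, and finish with $\gamma\le n$.
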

\begin{proof}
  By Lemma~\ref{lemma5}, we have
  \begin{align*}
  4n^2+4n-\gamma=&\sum_{i=1}^{N}i|Y_i|+\sum_{i=1}^{N}|Y_i|-\sum_{i\ge3}|Y_i|\frac{(i-1)(i-2)}{2}\\
  =&2|Y_1|+3|Y_2|+3|Y_3|+2|Y_4|-\sum_{i\ge5}|Y_i|\frac{i(i-5)}{2}\\
  \le&2|Y_1|+3|Y_2|+3|Y_3|+2|Y_4|.
  \end{align*}
  Then the statement follows from $\gamma\le n$.
\end{proof}
Now we divide our discussion into five cases according to the residues modulo 5.

\begin{proposition}\label{prop3}
  Theorem~\ref{mainthm} holds for $n\equiv6\pmod{15}$.
\end{proposition}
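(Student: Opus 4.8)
The plan is to exploit the constraints modulo $5$ coming from Equation~(\ref{eq8}) in the same spirit that Propositions~\ref{prop1} and~\ref{prop2} exploited Equation~(\ref{eq4}) modulo $3$. When $n\equiv6\pmod{15}$ we have $n\equiv0\pmod 3$, so all of the structural conclusions of Lemma~\ref{lemma7} and Equation~(\ref{eq11}) are available: $T\cap T^{(2)}=\emptyset$, $|X_0|=\frac23 n^2-2n+1$, $|X_1|=|X_2|=2n$, $|X_3|=\frac43 n^2-2n$, and $X_1=T$. Moreover $n\equiv1\pmod 5$, which I would use to reduce the right-hand side of~(\ref{eq8}): modulo $5$ the coefficients $16n^3-4n$, $4n^2-6n+2$, $4n-4$ collapse to small residues (indeed $4n-4\equiv0\pmod 5$ kills the $TT^{(2)}$ term), so $TT^{(4)}\equiv cH + c'T - T^{(5)}\pmod 5$ for explicit small $c,c'$. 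First I would record this congruence precisely and, using $\gcd(5,2n^2+1)=1$ (valid since $n\equiv1\pmod 5$ forces $2n^2+1\equiv3\pmod5$), deduce that every coefficient of $T^{(5)}$ is $1$ (that is, $T\cap T^{(5)}$, $T\cap T^{(-5)}$ are controlled — the analogue of Lemma~\ref{lemma3} for the fifth-power map, which I would prove by the same argument: a coincidence $a_i=a_j^{5}$ or $a_i=a_i^{\pm5}$ produces a repeated element in~(\ref{eq1}) or an element of order dividing $2,4,6$, all impossible).

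Next I would set up the partition bookkeeping for $TT^{(4)}=\sum iY_i$ exactly as in Lemma~\ref{lemma5} and Corollary~\ref{coro1}, which give $\sum i|Y_i|=4n^2$, $\sum|Y_i|=2n^2+1$, and $2|Y_1|+3|Y_2|+3|Y_3|+2|Y_4|\ge 4n^2+3n$. The mod-$5$ congruence for $TT^{(4)}$ partitions $H$ according to residue classes of coefficients: writing $a,b$ for the number of elements of $T^{(5)}$ that, together with the contributions of the $T$ and $H$ terms, land in each residue class mod $5$, I get three (or four) equations of the form $\sum_{i\equiv \ast}|Y_i| = (\text{something like } 2n \text{ or } 2n^2+1-\dots)$ with the "exceptional" counts bounded by $2n$ since $|T^{(5)}|=2n$. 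I would also need the analogue of $|X_0|$-vanishing: since $t^{5}=t\cdot t^{4}$ and $t^{3}=t^{-1}t^{4}$ for $t\in T$, both $T^{(5)}$ and $T^{(3)}$ lie in the support of $TT^{(4)}$, so $|Y_0|$ is small or zero, and likewise $T=X_1$ feeds into $Y_1$ via $t=t^{-3}t^{4}$ only when $t^{4}\in T^{(4)}$, etc. Combining the linear relations from Lemma~\ref{lemma5} with the residue-class equations and Corollary~\ref{coro1} should force $\sum_{i\ge3}|Y_i|\frac{(i-1)(i-2)}{2}$ to be simultaneously too large (from the $4n^2+3n$ lower bound, which needs mass on high $Y_i$) and too small (from the mod-$5$ count, which caps how many elements can have large coefficient), producing a numerical contradiction for all $n\equiv6\pmod{15}$ — in parallel with how Proposition~\ref{prop1} derived $a_1+a_2\ge 3n-2$ against $a_1+a_2\le 2n$.

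The main obstacle is that, unlike Equation~(\ref{eq4}), Equation~(\ref{eq8}) still contains the term $(4n-4)TT^{(2)}$, and although its coefficient vanishes mod $5$ when $n\equiv1\pmod5$, the term $-T^{5}$ must be understood as a genuine signed group-ring element, not merely $-T^{(5)}$: $T^{5}$ has large positive coefficients coming from many additive representations, and I must argue that modulo $5$ these reduce in a usable way. Concretely, I expect to need the coefficient information from~(\ref{eq11}) — i.e. the exact shape of $TT^{(2)}=T+2T^{(3)}+3X_3$ — fed into $T^{5}=T\cdot T^{(2)}\cdot\text{(stuff)}$ or into the identity $T^{3}=2H-T^{(2)}+(2n-2)e$ times $T^{(2)}$, to pin down $T^{5}\bmod 5$. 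Making that reduction clean, and checking that the resulting case analysis on $|Y_M|$ (mirroring Cases~1--5 of Proposition~\ref{prop2}) does not leave a stray small value of $n$ uncovered, is where the real work lies; I would expect a handful of small residues to survive and be swept into the final "small dimensions" subsection.
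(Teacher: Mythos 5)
Your starting point coincides with the paper's: reduce Equation~(\ref{eq8}) modulo $5$ using $n\equiv1\pmod5$ and run the $Y_i$ bookkeeping of Lemma~\ref{lemma5}. But the issue you single out as the ``main obstacle'' is not one. Since $H$ is abelian, $\mathbb{F}_5[H]$ is a commutative ring of characteristic $5$, so $T^5\equiv T^{(5)}\pmod 5$ by the Frobenius endomorphism --- the same fact ($T^3\equiv T^{(3)}\pmod 3$) already underlies the congruences used in Propositions~\ref{prop1} and~\ref{prop2}. Moreover for $n\equiv1\pmod 5$ one has $4n^2-6n+2\equiv0$ and $4n-4\equiv0\pmod5$, so the congruence is exactly $TT^{(4)}\equiv 2H+4T^{(5)}\pmod 5$; no detour through (\ref{eq11}) or through identities for $T^3$ is needed, and your proposed workaround would be wasted effort. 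Likewise, your ``analogue of Lemma~\ref{lemma3} for fifth powers'' is neither needed (all that matters is that $x\mapsto x^5$ is a bijection of $H$, so $T^{(5)}$ is a $2n$-element set) nor provable ``by the same argument'': $a_i=a_j^5$ does not produce a repeated element of (\ref{eq1}) the way $a_i=a_j^3$ does.

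The substantive gap is in the endgame. You predict that combining Lemma~\ref{lemma5}, Corollary~\ref{coro1} and the residue-class counts will ``force a numerical contradiction for all $n\equiv6\pmod{15}$,'' perhaps leaving a few small residues for the sporadic subsection. It does not: with only the classes $1,2\pmod 5$ occupied, Lemma~\ref{lemma5} yields $\sum_{i\ge1}5i(|Y_{5i+1}|+|Y_{5i+2}|)=2n-2$ and then $2n^2+1-4n+\gamma\le\frac{N-1}{2}(2n-2)$, which forces $N\ge 2n-1$, $|Y_N|=1$, $Y_N=\{e\}$ and hence $N=|T\cap T^{(4)}|=2n$. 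In other words the counting is perfectly consistent; it only pins down the structure $T=T^{(4)}$. The contradiction requires a further, non-numerical step that your plan does not contain: from $T=T^{(4)}$ one gets $TT^{(4)}=T^2=2H-T^{(2)}+(2n-2)e$, and comparison with the mod-$5$ congruence forces $T^{(2)}=T^{(5)}$; then for any $t\in T$ there is $s\in T$ with $s^2=t^5=t\cdot t^4$ and $t^4\in T$, so Lemma~\ref{lemma1} gives $t=t^4=s$, hence $t^3=e$ and $t=e$ (as $\gcd(3,|H|)=1$), a contradiction covering every $n\equiv6\pmod{15}$ with no leftover cases. Without this structural conclusion the argument you sketch does not close.
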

\begin{proof}
By Equation~(\ref{eq8}), we have
\begin{align}\label{eq10}
TT^{(4)}\equiv2H+4T^{(5)}\pmod{5}.
\end{align}
Since $\gcd(5,2n^2+1)=1$, then all coefficients of $T^{(5)}$ is 1. By Equation (\ref{eq10}), we obtain
\begin{align*}
&\sum_{i\ge0}|Y_{5i+1}|=2n,\\
&\sum_{i\ge0}|Y_{5i+2}|=2n^2-2n+1,\\
&|Y_{i}|=0\text{ for all }i\equiv0,3,4\pmod{5}.
\end{align*}
By Lemma~\ref{lemma5}, we can get
\begin{equation}\label{eq9}
\begin{split}
2n-2=&\sum_{i=1}^{N}i|Y_i|-\sum_{i\ge0}|Y_{5i+1}|-2\sum_{i\ge0}|Y_{5i+2}|\\
=&\sum_{i\ge1}5i(|Y_{5i+1}|+|Y_{5i+2}|).
\end{split}
\end{equation}
Consequently,
\begin{align*}
2n^2+1-4n+\gamma=&\sum_{i=6}^{N}|Y_i|\frac{(i-1)(i-2)}{2}\\
\le&\frac{N-1}{2}\sum_{i=6}^{N}|Y_i|(i-2)\\
\le&\frac{N-1}{2}(2n-2).
\end{align*}
This implies that $N\ge2n-1$. From Equation~(\ref{eq9}), we have $|Y_N|=1$, and so $Y_{N}=\{e\}$ and $N=|T\cap T^{(4)}|$ is even. Thus $N=2n$.
In this case, we have $T=T^{(4)}$ and $TT^{(4)}=T^2=2H-T^{(2)}+(2n-2)e$. Comparing with Equation~(\ref{eq10}), we obtain $T^{(2)}=T^{(5)}$. Then for any $t\in T$, there exists $s\in T$ such that $s^2=t^5=t\cdot t^4$. Note that $t^4\in T$, by Lemma~\ref{lemma1}, we have $t=t^4=s$. Then $t^3=e$, and so $t=e$, which is a contradiction.
\end{proof}

\begin{proposition}\label{prop4}
  Theorem~\ref{mainthm} holds for $n\equiv9\pmod{15}$.
\end{proposition}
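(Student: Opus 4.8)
The plan is to follow the pattern of Proposition~\ref{prop3}: reduce a group-ring identity modulo $5$, read off the coefficient structure of $TT^{(4)}$ (equivalently, of the partition $H=\sum_i Y_i$), push this through Lemma~\ref{lemma5} and Corollary~\ref{coro1} to force $N=2n$, conclude $T=T^{(4)}$, and then derive a contradiction by characters. The new difficulty when $n\equiv4\pmod5$ is that the coefficient $-(4n-4)$ of $TT^{(2)}$ in Equation~(\ref{eq8}) no longer vanishes modulo $5$, so one must feed in Equation~(\ref{eq11}) and live with the quadratic-size set $X_3$.

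First I would reduce Equation~(\ref{eq8}) modulo $5$. For $n\equiv4\pmod5$ one has $16n^3-4n\equiv3$, $4n^2-6n+2\equiv2$, $-(4n-4)\equiv3$, and $T^5\equiv T^{(5)}\pmod5$ by the Frobenius congruence, so, substituting $TT^{(2)}=T+2T^{(3)}+3X_3$,
\[
TT^{(4)}\equiv 3H+2T+3\bigl(T+2T^{(3)}+3X_3\bigr)+4T^{(5)}\equiv 3H+T^{(3)}+4X_3+4T^{(5)}\pmod5.
\]
Since $T$, $T^{(3)}$ and $X_3$ are pairwise disjoint (Lemma~\ref{lemma3} and Equation~(\ref{eq11})), every coefficient of $TT^{(4)}$ is $\equiv3+[g\in T^{(3)}]+4[g\in X_3]+4[g\in T^{(5)}]\pmod5$, hence never $\equiv0$; thus $Y_i=\emptyset$ whenever $5\mid i$, the indices $i\equiv1\pmod5$ (resp.\ $i\equiv4\pmod5$) with $|Y_i|\ne0$ contribute only $|X_3\cap T^{(5)}|\le2n$ (resp.\ $2n-|T^{(3)}\cap T^{(5)}|\le2n$) to the partition, and the two classes $i\equiv2,3\pmod5$ absorb the remaining $\Theta(n^2)$ elements. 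This is weaker than in Proposition~\ref{prop3}, where three residue classes were empty, so I would additionally exploit $TT^{(5)}\pmod5$: from the closed form $T^3=4nH+(2n-3)T-2T^{(3)}-3X_3$ (obtained by eliminating $T^3$ between Equations~(\ref{eq4}) and (\ref{eq11})) and the congruence $2n-3\equiv0\pmod5$ one gets $T^3\equiv H+3T^{(3)}+2X_3\pmod5$, whence $TT^{(5)}\equiv T^6=(T^3)^2\pmod5$; using $(T^{(3)})^2=2H-T^{(6)}+(2n-2)e$, the fact that $|X_3|\equiv0\pmod5$ precisely because $n\equiv9\pmod{15}$, and $3X_3=TT^{(2)}-T-2T^{(3)}$, every product that appears reduces to a combination of $H$, $e$, and the sets $T^{(2^k)}$, $T^{(3\cdot2^k)}$. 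Combining the two congruences with Lemma~\ref{lemma5} and Corollary~\ref{coro1}, I expect to force $|Y_N|=1$, hence $Y_N=\{e\}$, $N=|T\cap T^{(4)}|$ even, and therefore $N=2n$, i.e.\ $T=T^{(4)}$.

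For the endgame, $T=T^{(4)}$ gives $T^2-(T^{(2)})^2=T^{(4)}-T^{(2)}=T-T^{(2)}$, so $(T-T^{(2)})(T+T^{(2)}-e)=0$ in $\mathbb{Z}[H]$; applying a nontrivial character $\chi$ and using $\chi(T)^2+\chi(T^{(2)})=2n-2$, the value $\chi(T)$ is forced to be one of $\tfrac{-1\pm\sqrt{8n-7}}{2}$ or $\tfrac{1\pm\sqrt{8n-11}}{2}$ (one checks $\chi(T)\ne2n$ for $\chi\ne\chi_0$). Since $n\equiv0\pmod3$, $8n-7\equiv2\pmod3$ is not a perfect square, so $\tfrac{-1\pm\sqrt{8n-7}}{2}$ are irrational conjugates and a Galois argument gives them equal multiplicity, while the squaring map $\chi\mapsto\chi^2$ carries $S_2=\{\chi:\chi(T)+\chi(T^{(2)})=1\}$ to itself (using $T=T^{(4)}$ again) and interchanges its two values, so those two multiplicities are equal too; counting against $\sum_\chi\chi(T)=0$ then yields multiplicities $\tfrac{n(n+2)}2$ and $\tfrac{n(n-2)}2$, whence $\sum_\chi\chi(T)\overline{\chi(T^{(2)})}=\tfrac{9n(3n-2)}2>0$, contradicting $\sum_\chi\chi(T)\overline{\chi(T^{(2)})}=|H|\,|T\cap T^{(2)}|=0$, which holds by Lemma~\ref{lemma7}. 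The hard part will be the middle step: carrying out the $TT^{(5)}\pmod5$ reduction cleanly and, above all, showing that the combined mod-$5$ data still forces $N\ge2n-1$ despite the two quadratic-size residue classes; in Proposition~\ref{prop3} that inequality fell out of a one-line pigeonhole with three empty classes, and here it will require carefully balancing the $TT^{(5)}$ information against Corollary~\ref{coro1}.
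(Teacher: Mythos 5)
Your reduction of Equation~(\ref{eq8}) modulo $5$ is correct and agrees with the paper: for $n\equiv9\pmod{15}$ one gets $TT^{(4)}\equiv 3H+T^{(3)}+4X_3+4T^{(5)}\pmod 5$, i.e.\ (splitting $H=X_0+T+T^{(3)}+X_3$) coefficients $\equiv 3$ on $X_0\cup T$, $\equiv 4$ on $T^{(3)}$, $\equiv 2$ on $X_3$, shifted by $4$ on $T^{(5)}$. But at exactly this point you misjudge the situation as ``weaker than in Proposition~\ref{prop3}'' and abandon the counting, launching instead into an uncompleted program: reduce $TT^{(5)}\bmod 5$, somehow force $N\ge 2n-1$ and hence $T=T^{(4)}$, and finish with a character/multiplicity argument. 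The step that carries all the weight --- that the combined mod-$5$ data forces $N\ge 2n-1$ despite two residue classes of size $\Theta(n^2)$ --- is never proved; you explicitly flag it as the ``hard part'' and only say you ``expect'' it. The subsequent endgame (Galois conjugacy of the eigenvalues $\tfrac{-1\pm\sqrt{8n-7}}{2}$, $\tfrac{1\pm\sqrt{8n-11}}{2}$, the claim that $\chi\mapsto\chi^2$ preserves the set $S_2$ and swaps its two values, and the specific multiplicities $\tfrac{n(n\pm2)}{2}$) is likewise unverified and rests entirely on the missing step. As written, this is a plan, not a proof.

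The irony is that your own congruence already finishes the proposition, and this is precisely what the paper does. No element has coefficient $\equiv0\pmod5$, and every element of $X_0\cup T$ contributes at least $3$ (or $2$ if it lies in $T^{(5)}$), every element of $T^{(3)}$ at least $4$ (or $3$), and every element of $X_3$ at least $2$ (or $1$) to $\sum_{i}i|Y_i|$. Hence, by Lemma~\ref{lemma5}(2) and the sizes $|X_0|=\tfrac23 n^2-2n+1$, $|X_3|=\tfrac43 n^2-2n$, $|T|=|T^{(3)}|=|T^{(5)}|=2n$ from Equation~(\ref{eq11}),
\[
4n^2=\sum_{i=1}^{N}i|Y_i|\ \ge\ 3|X_0|+3|T|+4|T^{(3)}|+2|X_3|-|T^{(5)}|=\tfrac{14}{3}n^2+2n+3,
\]
which is absurd. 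So no appeal to $TT^{(5)}$, to $N=2n$, or to characters is needed; you should replace the second and third paragraphs of your proposal by this one-line weighted count against Lemma~\ref{lemma5}(2).
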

\begin{proof}
By Equations~(\ref{eq11}) and (\ref{eq8}), we have
\begin{align*}
TT^{(4)}\equiv&3H+2T+3TT^{(2)}+4T^{(5)}\pmod{5}\\
\equiv&3H+2T+3T+T^{(3)}+4X_3+4T^{(5)}\pmod{5}\\
\equiv&2T+4T^{(5)}+3X_0+T+4T^{(3)}+2X_3\pmod{5}\\
\equiv&4T^{(5)}+3X_0+3T+4T^{(3)}+2X_3\pmod{5}.
\end{align*}
From above equation, we can get
\begin{align*}
&\sum_{i\ge0}|Y_{5i+1}|=|T^{(5)}\cap X_3|,\\
&\sum_{i\ge0}|Y_{5i+2}|=|X_3\backslash T^{(5)}|+|T^{(5)}\cap X_0|+|T^{(5)}\cap T|,\\
&\sum_{i\ge0}|Y_{5i+3}|=|X_0\backslash T^{(5)}|+|T\backslash T^{(5)}|+|T^{(3)}\cap T^{(5)}|,\\
&\sum_{i\ge0}|Y_{5i+4}|=|T^{(3)}\backslash T^{(5)}|.
\end{align*}
By Lemma~\ref{lemma5}, we obtain
\begin{align*}
4n^2=&\sum_{i=1}^{N}i|Y_i|\\
\ge&\sum_{i\ge0}|Y_{5i+1}|+2\sum_{i\ge0}|Y_{5i+2}|+3\sum_{i\ge0}|Y_{5i+3}|+4\sum_{i\ge0}|Y_{5i+4}|\\
=&|T^{(5)}\cap X_3|+2(|X_3\backslash T^{(5)}|+|T^{(5)}\cap X_0|+|T^{(5)}\cap T|)+3(|X_0\backslash T^{(5)}|+|T\backslash T^{(5)}|+|T^{(3)}\cap T^{(5)}|)\\
&+4|T^{(3)}\backslash T^{(5)}|\\
\ge&3(\frac{2}{3}n^2-2n+1)+3\times2n+4\times2n+2(\frac{4}{3}n^2-2n)-2n\\
=&\frac{14}{3}n^2+2n+3,
\end{align*}
which is impossible.
\end{proof}

\begin{proposition}\label{prop5}
  Theorem~\ref{mainthm} holds for $n\equiv3\pmod{15}$ except for $n=3$.
\end{proposition}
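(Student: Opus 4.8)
The plan is to mirror the proofs of Propositions~\ref{prop3} and~\ref{prop4}: reduce Equation~(\ref{eq8}) modulo~$5$, feed in the rigid description of $TT^{(2)}$ obtained in the $n\equiv0\pmod3$ analysis, and then push everything through Corollary~\ref{coro1}. Since $n\equiv3\pmod5$, the coefficients in Equation~(\ref{eq8}) reduce as $16n^3-4n\equiv0$, $4n^2-6n+2\equiv0$ and $4n-4\equiv3$ modulo~$5$, so Equation~(\ref{eq8}) collapses to $TT^{(4)}\equiv2\,TT^{(2)}+4T^{(5)}\pmod5$; substituting Equation~(\ref{eq11}), namely $TT^{(2)}=T+2T^{(3)}+3X_3$, this becomes
\[
TT^{(4)}\equiv 2T+4T^{(3)}+X_3+4T^{(5)}\pmod5 .
\]
Recall from the $n\equiv0\pmod3$ discussion that $H$ is partitioned as $H=T\sqcup T^{(3)}\sqcup X_3\sqcup X_0$ with $|T|=|T^{(3)}|=2n$, $|X_3|=\tfrac43n^2-2n$ and $|X_0|=\tfrac23n^2-2n+1$ (this uses $X_1=T$ and $X_2=T^{(3)}$, the latter read off Equation~(\ref{eq11}) together with Lemma~\ref{lemma3}). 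Since $T$, $T^{(3)}$, $X_3$ are pairwise disjoint, the displayed congruence determines the coefficient of every element of $TT^{(4)}$ modulo~$5$ up to the contribution of $T^{(5)}$.

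The next step is to split $T^{(5)}$ --- a $2n$-element subset of $H$, all of whose coefficients are $1$ because $\gcd(5,2n^2+1)=1$ --- along this decomposition: write $a=|T^{(5)}\cap T|$, $b=|T^{(5)}\cap T^{(3)}|$, $c=|T^{(5)}\cap X_3|$ and $d=|T^{(5)}\cap X_0|$, so that $a+b+c+d=2n$. Reading off the displayed congruence, an element of $TT^{(4)}$ has coefficient $\equiv1\pmod5$ precisely on $(T\cap T^{(5)})\cup(X_3\setminus T^{(5)})$ (size $a+|X_3|-c$), $\equiv2$ precisely on $T\setminus T^{(5)}$ (size $2n-a$), $\equiv3$ precisely on $T^{(3)}\cap T^{(5)}$ (size $b$), $\equiv4$ precisely on $(T^{(3)}\setminus T^{(5)})\cup(X_0\cap T^{(5)})$ (size $2n-b+d$), and $\equiv0$ on all remaining elements. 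Because each $Y_j$ is contained in the set of elements of coefficient $\equiv j\pmod5$, this yields $|Y_1|\le a+|X_3|-c$, $|Y_2|\le 2n-a$, $|Y_3|\le b$ and $|Y_4|\le 2n-b+d$.

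Finally, I would substitute these four bounds into Corollary~\ref{coro1} and simplify using $|X_3|=\tfrac43n^2-2n$, obtaining
\[
4n^2+3n\ \le\ 2|Y_1|+3|Y_2|+3|Y_3|+2|Y_4|\ \le\ \tfrac83n^2+6n+(b+2d-a-2c).
\]
Since $a,b,c,d\ge0$ with $a+b+c+d=2n$, we have $b+2d-a-2c\le b+2d\le 4n$, so $4n^2+3n\le\tfrac83n^2+10n$, i.e.\ $\tfrac43n^2\le 7n$ and $n\le5$. As every integer $n\equiv3\pmod{15}$ other than $n=3$ satisfies $n\ge18$, this contradiction finishes the case. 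I expect the only delicate part to be the bookkeeping of the modular reduction --- in particular justifying $X_1=T$ and $X_2=T^{(3)}$, hence the disjointness invoked above, and checking that $|T^{(5)}|=2n$; note also that the single estimate $\sum i|Y_i|=4n^2$ of Lemma~\ref{lemma5} is by itself too weak here (elements of $X_0$ may have coefficient $0$ and so carry no weight), which is exactly why Corollary~\ref{coro1}, controlling $|Y_1|,\dots,|Y_4|$ directly, is needed. Once the coefficient pattern of $TT^{(4)}\bmod5$ is in hand the final estimate is routine, and --- as it must be --- it just fails at $n=3$, the known $DPL(3,6)$.
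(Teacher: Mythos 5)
Your proposal is correct and is essentially the paper's own proof: reduce Equation~(\ref{eq8}) modulo $5$ using Equation~(\ref{eq11}), read off which of $T$, $T^{(3)}$, $X_3$, $X_0$ (split along $T^{(5)}$) carry coefficients $\equiv1,2,3,4\pmod5$ in $TT^{(4)}$, and feed the resulting bounds on $|Y_1|,\dots,|Y_4|$ into Corollary~\ref{coro1} to get $4n^2+3n\le\tfrac83n^2+10n$, hence $n\le5$ and so $n=3$. Your explicit bookkeeping with $a,b,c,d$ is just a more detailed version of the paper's final estimate, and both arrive at the same inequality.
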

\begin{proof}
By Equations~(\ref{eq11}) and (\ref{eq8}), we have
\begin{align*}
TT^{(4)}\equiv&2TT^{(2)}+4T^{(5)}\pmod{5}\\
\equiv&2T+4T^{(3)}+X_3+4T^{(5)}\pmod{5}.
\end{align*}
From above equation, we can get
\begin{align*}
&\sum_{i\ge0}|Y_{5i+1}|=|X_3\backslash T^{(5)}|+|T\cap T^{(5)}|,\\
&\sum_{i\ge0}|Y_{5i+2}|=|T\backslash T^{(5)}|,\\
&\sum_{i\ge0}|Y_{5i+3}|=|T^{(3)}\cap T^{(5)}|,\\
&\sum_{i\ge0}|Y_{5i+4}|=|T^{(3)}\backslash T^{(5)}|+|T^{(5)}\cap X_0|.
\end{align*}
By Corollary~\ref{coro1}, we obtain
\begin{align*}
4n^2+3n\le&2|Y_1|+3|Y_2|+3|Y_3|+2|Y_4|\\
\le& 2(|X_3\backslash T^{(5)}|+|T\cap T^{(5)}|+|T^{(3)}\backslash T^{(5)}|+|T^{(5)}\cap X_0|)+3(|T\backslash T^{(5)}|+|T^{(3)}\cap T^{(5)}|)\\
\le& 2(\frac{4}{3}n^2-2n+2n+2n)+3\times 2n.
\end{align*}
This leads to $\frac{4}{3}n^2-7n\le0$ and so $n\le5$. Hence $n=3$.
\end{proof}

\begin{proposition}\label{prop6}
  Theorem~\ref{mainthm} holds for $n\equiv12\pmod{15}$.
\end{proposition}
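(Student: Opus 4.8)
The plan is to push the modulo-$5$ analysis of $TT^{(4)}$ from Propositions~\ref{prop3}--\ref{prop5} one step further and then supplement it with a second relation. Since $n\equiv12\pmod{15}$ forces $n\equiv0\pmod3$, all of the present subsection's structure is available: $X_1=T$ and (comparing Equation~(\ref{eq11}) with $TT^{(2)}=X_1+2X_2+3X_3$) $X_2=T^{(3)}$, so $H$ is the disjoint union $X_0\sqcup T\sqcup T^{(3)}\sqcup X_3$ with $|X_0|=\frac23n^2-2n+1$, $|X_3|=\frac43n^2-2n$, $|T|=|T^{(3)}|=2n$, together with $\beta=0$ and $TT^{(2)}=T+2T^{(3)}+3X_3$.

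First I would reduce Equation~(\ref{eq8}) modulo $5$. Since $n\equiv2\pmod{5}$ one computes $16n^3-4n\equiv0$, $4n^2-6n+2\equiv1$ and $-(4n-4)\equiv1\pmod{5}$; using $T^5\equiv T^{(5)}\pmod{5}$, that every coefficient of $T^{(5)}$ equals $1$ (as $\gcd(5,2n^2+1)=1$), and Equation~(\ref{eq11}), this yields
\[TT^{(4)}\equiv 2T+2T^{(3)}+3X_3+4T^{(5)}\pmod{5}.\]
Hence, writing $TT^{(4)}=\sum_{i\ge0}iY_i$ as in Lemma~\ref{lemma5}, the residue modulo $5$ of the coefficient of any $h\in H$ is determined by which of $X_0,T,T^{(3)},X_3$ contains $h$ and by whether $h\in T^{(5)}$.

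Writing $a=|X_0\cap T^{(5)}|$, $b=|(T\cup T^{(3)})\cap T^{(5)}|$, $c=|X_3\cap T^{(5)}|$ with $a+b+c=2n$, bounding each $|Y_i|$ below by the least nonnegative integer in its residue class, and using $\sum_{i\ge1}i|Y_i|=4n^2$ (Lemma~\ref{lemma5}(2)), one gets $4n^2\ge 4n^2+5a$, hence $a=0$ and $T^{(5)}\cap X_0=\emptyset$. The estimate is then tight, so every coefficient of $TT^{(4)}$ attains the minimum in its residue class; in particular $|Y_i|=0$ for $i\ge4$, and feeding the resulting values of $|Y_1|,|Y_2|,|Y_3|$ into Lemma~\ref{lemma5}(3) forces $\gamma=c=0$. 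Thus $T\cap T^{(4)}=\emptyset$, $T^{(5)}\subseteq T\cup T^{(3)}$, and one obtains the exact identity $TT^{(4)}=3H-3X_0-T-T^{(3)}-T^{(5)}$.

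The remaining step, which I expect to be the main obstacle, is to contradict this configuration: $T$ is already disjoint from $T^{(2)},T^{(3)},T^{(4)}$, while $T^{(5)}$ is confined to $T\cup T^{(3)}$ and $TT^{(4)}$ is pinned down, yet no numerical contradiction has appeared, so the $TT^{(4)}$ computation alone stops here. I would bring in a second, independent relation, most naturally the modulo-$5$ reduction of $TT^{(5)}\equiv T^6\pmod{5}$: using $T^6=(T^2)^3$ with $T^2=2H-T^{(2)}+(2n-2)e$, together with $T^{(2)}T^{(4)}=(TT^{(2)})^{(2)}=T^{(2)}+2T^{(6)}+3X_3^{(2)}$, one obtains a congruence for $TT^{(5)}$ described in terms of $X_0^{(2)},T^{(2)},T^{(6)},X_3^{(2)}$ and $T^{(4)}$; combining it with $T^{(10)}=(T^{(5)})^{(2)}\subseteq T^{(2)}\cup T^{(6)}$ (the image under $g\mapsto g^2$ of $T^{(5)}\subseteq T\cup T^{(3)}$), with $\beta=\gamma=0$, and with a size count of the same shape as those above should produce the impossibility. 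Alternatively, a multiplier/orbit argument exploiting $T^{(5)}\subseteq T\cup T^{(3)}$ together with the three disjointness relations, $|T|=2n$, and the fact that $T$ generates $H$ may close the case directly.
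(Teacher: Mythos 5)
Your first half is essentially the paper's argument and is carried out correctly: reducing Equation~(\ref{eq8}) modulo $5$ with $n\equiv2\pmod 5$ and substituting Equation~(\ref{eq11}) gives $TT^{(4)}\equiv 2T+2T^{(3)}+3X_3+4T^{(5)}\pmod 5$, the counting against $\sum_i i|Y_i|=4n^2$ forces $T^{(5)}\cap X_0=\emptyset$ with equality throughout, and Lemma~\ref{lemma5} then yields $\gamma=0$ and $T^{(5)}\subseteq T\cup T^{(3)}$, with every coefficient of $TT^{(4)}$ equal to the least residue in its class. Up to this point you agree with the paper.

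The gap is in the endgame, which you leave as a hope rather than a proof, and your assertion that ``the $TT^{(4)}$ computation alone stops here'' is not correct: the paper finishes the case precisely from the configuration you reached, without any $TT^{(5)}$ or $T^6$ computation. The missing observations are the following. First, if some $h\in T^{(3)}\cap T^{(5)}$, write $h=s^3=t^5$ with $s,t\in T$; then $h=s^{-1}\cdot s^{4}=t\cdot t^{4}$ gives two factorizations of $h$ in $TT^{(4)}$ (they cannot coincide in a group of odd order), so $h$ has coefficient at least $2$ in $TT^{(4)}$, whereas your exact identity puts every element of $T^{(3)}\cap T^{(5)}$ in $Y_1$. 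Hence $T^{(5)}\cap T^{(3)}=\emptyset$, and since the fifth-power map is a bijection on $H$, $T^{(5)}=T$. Second, with $T^{(5)}=T$ the exact coefficients give $TT^{(4)}=T+2T^{(3)}+3X_3=TT^{(2)}$, i.e.\ $T(T^{(2)}-T^{(4)})=0$; by Lemma~\ref{lemma6} every nontrivial character satisfies $\chi(T)\ne0$, so $\chi(T^{(2)})=\chi(T^{(4)})$ for all characters and therefore $T^{(2)}=T^{(4)}$. Since squaring is a bijection on the odd-order group $H$, this forces $T=T^{(2)}$, contradicting $T\cap T^{(2)}=\emptyset$ (Lemma~\ref{lemma7}). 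Your proposed substitute—reducing $TT^{(5)}\equiv T^6\pmod 5$ as in the $n\equiv0\pmod{15}$ case, or an unspecified multiplier/orbit argument—is not carried out, and there is no verification that the resulting count produces a contradiction when $n\equiv2\pmod5$ (the coefficients in the reduction of $T^6$ change with the residue of $n$), so as written the case is not closed.
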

\begin{proof}
By Equations~(\ref{eq11}) and (\ref{eq8}), we have
\begin{equation}\label{eq13}
\begin{split}
TT^{(4)}\equiv& T+TT^{(2)}+4T^{(5)}\pmod{5}\\
\equiv&2T+2T^{(3)}+3X_{3}+4T^{(5)}\pmod{5}.
\end{split}
\end{equation}
From above equation, we can get
\begin{align*}
&\sum_{i\ge0}|Y_{5i+1}|=|T\cap T^{(5)}|+|T^{(3)}\cap T^{(5)}|,\\
&\sum_{i\ge0}|Y_{5i+2}|=|T\backslash T^{(5)}|+|T^{3}\backslash T^{(5)}|+|X_3\cap T^{(5)}|,\\
&\sum_{i\ge0}|Y_{5i+3}|=|X_3\backslash T^{(5)}|,\\
&\sum_{i\ge0}|Y_{5i+4}|=|X_0\cap T^{(5)}|.
\end{align*}
By Lemma~\ref{lemma5}, we obtain
\begin{align*}
4n^2=&\sum_{i=1}^{N}i|Y_i|\\
\ge&\sum_{i\ge0}|Y_{5i+1}|+2\sum_{i\ge0}|Y_{5i+2}|+3\sum_{i\ge0}|Y_{5i+3}|+4\sum_{i\ge0}|Y_{5i+4}|\\
=&(|T\cap T^{(5)}|+|T^{(3)}\cap T^{(5)}|)+2(|T\backslash T^{(5)}|+|T^{3}\backslash T^{(5)}|+|X_3\cap T^{(5)}|)+3|X_3\backslash T^{(5)}|+4|X_0\cap T^{(5)}|\\
\ge&3(\frac{4}{3}n^2-2n)+2\times2n+2\times2n-2n\\
=&4n^2.
\end{align*}
This means that $T^{(5)}\subset(T\cup T^{(3)}\cup X_3)$ and
\begin{align*}
&|Y_1|=a_1,\\
&|Y_2|=4n-a_1+a_2,\\
&|Y_3|=\frac{4}{3}n^2-2n-a_2,\\
&|Y_i|=0\text{ for all }i\ge4,
\end{align*}
where $a_1=|T^{(5)}\cap (T\cup T^{(3)})|$, $a_2=|T^{(5)}\cap X_3|$ and $a_1+a_2=2n$.
Since $e\in X_0$, then $T\cap T^{(4)}=\emptyset$. By Lemma~\ref{lemma5}, we have $|Y_1|+|Y_2|=4n$. So $a_2=0$ and $a_1=2n$. Thus $T^{(5)}\subset(T\cup T^{(3)})$.

If there exist $s,t\in T$ such that $s^3=t^5\in (T^{(5)}\cap T^{(3)})$, then $s^3=t^5=s^{-1}\cdot s^4=t\cdot t^4$. So $s^3=t^5$ appears at least twice in $TT^{(4)}$, which is a contradiction. Hence $T=T^{(5)}$ and
\[TT^{(4)}=T+2T^{(3)}+3X_{3}.\]
 Comparing with Equation~(\ref{eq11}), we have $TT^{(2)}=TT^{(4)}$, i.e., $T(T^{(2)}-T^{(4)})=0$.
Then for any nontrivial character $\chi\in\widehat{H}$, we have $\chi(T)(\chi(T^{(2)})-\chi(T^{(4)}))=0$.
 By Lemma~\ref{lemma6}, $\chi(T)\ne0$, then $\chi(T^{(2)})-\chi(T^{(4)})=0$ for any $\chi\in\widehat{H}$. Hence $T^{(2)}=T^{(4)}$, which contradicts to Lemma~\ref{lemma7}.
\end{proof}

\begin{proposition}\label{prop7}
  Theorem~\ref{mainthm} holds for $n\equiv0\pmod{15}$.
\end{proposition}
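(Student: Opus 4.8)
The plan is to follow the template of Propositions~\ref{prop3}--\ref{prop6}: reduce the master equation~(\ref{eq8}) modulo~$5$, now using $5\mid n$. Then $16n^{3}-4n\equiv0$, $4n^{2}-6n+2\equiv2$ and $-(4n-4)\equiv4\pmod5$, while $T^{5}\equiv T^{(5)}\pmod5$ since $\gcd(5,|H|)=1$; substituting~(\ref{eq11}) for $TT^{(2)}$ gives
\[TT^{(4)}\equiv T+3T^{(3)}+2X_{3}+4T^{(5)}\pmod5.\]
Writing $TT^{(4)}=\sum_{i=0}^{N}iY_{i}$ with $N\le 2n$ and using the partition $H=X_{0}\cup X_{1}\cup X_{2}\cup X_{3}$ (here $X_{1}=T$, and $X_{2}=T^{(3)}$ by~(\ref{eq11})), I would set $p_{k}=|T^{(5)}\cap X_{k}|$, so that $p_{0}+p_{1}+p_{2}+p_{3}=2n$, express each $\sum_{j\ge0}|Y_{5j+r}|$ as a linear form in the $p_{k}$, and feed these into Lemma~\ref{lemma5}(2) and Corollary~\ref{coro1}. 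This yields constraints such as $5(p_{0}+p_{1})\le\frac{4}{3}n^{2}-2n$.

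Unlike the residue classes already settled, this single congruence does not finish the argument: the large set $X_{3}$ now carries coefficient $2$ rather than the $3$ it had in Proposition~\ref{prop6}, so Lemma~\ref{lemma5}(2) is left with a slack of order $n^{2}$. To absorb it I would also analyse $TT^{(5)}\pmod5$. Again $T^{(5)}\equiv T^{5}\pmod5$, hence $TT^{(5)}\equiv T^{6}\pmod5$; expanding $T^{6}=T^{2}T^{4}$ using $T^{2}=2H-T^{(2)}+(2n-2)e$, the reduction of $T^{4}$ already computed in Proposition~\ref{prop3}, and $(T^{(2)})^{2}=2H-T^{(4)}+(2n-2)e$, one gets
\[TT^{(5)}\equiv H+4e+T^{(4)}+T^{(2)}T^{(4)}\pmod5,\]
and applying the automorphism $\psi_{2}:x\mapsto x^{2}$ to~(\ref{eq11}) rewrites $T^{(2)}T^{(4)}=T^{(2)}+2T^{(6)}+3X_{3}^{(2)}$, where $T^{(6)}=\psi_{2}(T^{(3)})$ and $X_{3}^{(2)}=\psi_{2}(X_{3})$. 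This produces a second system of congruences, governed by how $T$ meets $T^{(2)},T^{(4)},T^{(6)}$ and $X_{3}^{(2)}$; combining it with the first system should use up the remaining slack.

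From the combined constraints I expect to be pushed into an extremal configuration --- for example $N=2n$ with $Y_{N}=\{e\}$, which as in the $M=2n$ branch of Proposition~\ref{prop2} means $|T\cap T^{(4)}|=2n$ and hence $T=T^{(4)}$, so $TT^{(4)}=T^{2}=2H-T^{(2)}+(2n-2)e$. Comparing this with the mod-$5$ form of $TT^{(4)}$, or running a character argument in the spirit of Proposition~\ref{prop6} (using that $\chi(T)\ne0$ for every nontrivial $\chi\in\widehat{H}$ by Lemma~\ref{lemma6}), should collapse to $T=T^{(2)}$, contradicting $T\cap T^{(2)}=\emptyset$ from Lemma~\ref{lemma7}. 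Should the merged count instead overshoot $4n^{2}$, as it did for $n\equiv9\pmod{15}$, the contradiction is immediate.

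The step I expect to be the real obstacle is locating the auxiliary sets $T^{(2)},T^{(4)},T^{(6)},X_{3}^{(2)}$ inside the partition $\{X_{0},T,T^{(3)},X_{3}\}$: the congruence $TT^{(5)}\pmod5$ becomes usable only once one controls, say, how much of $T^{(2)}$ lies in $X_{0}$ versus in $X_{3}$. I would attack this via the complementary partition $H=\{e\}\cup T^{(2)}\cup P$ furnished by~(\ref{eq1}), where $P$ is the set of distinct products $a_{i}^{\pm1}a_{j}^{\pm1}$ ($1\le i<j\le n$), together with~(\ref{eq11}) applied to $T^{(2)}$ and the disjointness relations $T\cap T^{(2)}=\emptyset$ (Lemma~\ref{lemma7}) and $T^{(2)}\cap T^{(6)}=T^{(2)}\cap T^{(4)}=\emptyset$ (Lemmas~\ref{lemma3} and~\ref{lemma7} applied to $T^{(2)}$). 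Getting these overlaps tight enough that the merged count contradicts $|H|=2n^{2}+1$ for all $n\equiv0\pmod{15}$ is the crux.
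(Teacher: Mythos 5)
You have the right key identity: your congruence $TT^{(5)}\equiv H+4e+T^{(4)}+T^{(2)}T^{(4)}\pmod 5$, together with $T^{(2)}T^{(4)}=T^{(2)}+2T^{(6)}+3X_3^{(2)}$ obtained by applying $x\mapsto x^2$ to~(\ref{eq11}), is exactly the identity the paper works with. But your plan stalls precisely at the point you flag as the ``crux,'' and that crux is a non-issue: there is no need to locate $T^{(2)},T^{(6)},X_3^{(2)}$ inside the partition $\{X_0,T,T^{(3)},X_3\}$, nor to combine with the $TT^{(4)}\pmod 5$ congruence, nor to chase an extremal configuration with a character argument. Since $|H|=2n^2+1$ is odd, $x\mapsto x^2$ is an automorphism of $H$, so $\{X_0^{(2)},T^{(2)},T^{(6)},X_3^{(2)}\}$ is itself a partition of $H$ with the same block sizes $\frac23 n^2-2n+1,\ 2n,\ 2n,\ \frac43 n^2-2n$. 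Substituting it into your congruence gives
\begin{align*}
TT^{(5)}\equiv (X_0^{(2)}\setminus\{e\})+2T^{(2)}+3T^{(6)}+4X_3^{(2)}+T^{(4)}\pmod 5,
\end{align*}
so the only unknown overlaps are those of the $2n$-element set $T^{(4)}$ (and of $e$) with the squared partition --- an $O(n)$ perturbation, not the $O(n^2)$ slack you anticipate.

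The finish is then a single mass count, exactly as in your Propositions~\ref{prop4} and~\ref{prop7}-style bounds: writing $TT^{(5)}=\sum_i iZ_i$, one has $\sum_i i|Z_i|=|T|\,|T^{(5)}|=4n^2$, while each element of $X_0^{(2)}\setminus(\{e\}\cup T^{(4)})$, $T^{(2)}$, $T^{(6)}\setminus T^{(4)}$, $X_3^{(2)}\setminus T^{(4)}$ contributes at least $1,2,3,4$ respectively (and $X_0^{(2)}\cap T^{(4)}$, $T^{(6)}\cap T^{(4)}$ at least $2,4$), giving
\begin{align*}
4n^2\ \ge\ \tfrac23 n^2-2n+4n+6n+4\bigl(\tfrac43 n^2-2n\bigr)-8n\ =\ 6n^2-8n,
\end{align*}
i.e.\ $n\le4$, contradicting $n\equiv0\pmod{15}$. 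So the gap in your proposal is the missing observation that the squared sets form a partition of $H$ in their own right; without it you never complete the counting step, and the auxiliary machinery you propose (the $TT^{(4)}$ system, the $T=T^{(4)}$ extremal branch, Lemma~\ref{lemma6}) does not get used in the actual argument.
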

\begin{proof}
By Equations~(\ref{eq11}) and (\ref{eq8}), we have
\begin{align*}
T^6=&(32n^4-8n^2)H+(4n^2-6n+2)T^2-(4n-4)T^2T^{(2)}-T^2T^{(4)}\\
=&(32n^4-8n^2)H+(4n^2-6n+2)(2H-T^{(2)}+(2n-2)e)-(4n-4)(2H-T^{(2)}+(2n-2)e)T^{(2)}\\
&-(2H-T^{(2)}+(2n-2)e)T^{(4)}\\
=&(32n^4-16n^2+4)H-(12n^2-22n+10)T^{(2)}+(4n-4)(T^{(2)})^2+T^{(2)}T^{(4)}-(2n-2)T^{(4)}+\\
&(8n^3-20n^2+16n-4)e\\
=&(32n^4-16n^2+4)H-(12n^2-22n+10)T^{(2)}+(4n-4)(2H-T^{(4)}+(2n-2)e)+\\
&T^{(2)}T^{(4)}-(2n-2)T^{(4)}+(8n^3-20n^2+16n-4)e\\
=&(32n^4-16n^2+8n-4)H-(12n^2-22n+10)T^{(2)}+T^{(2)}T^{(4)}-(6n-6)T^{(4)}\\
&+(8n^3-12n^2+4)e.
\end{align*}
This implies that
\begin{equation}\label{eq14}
\begin{split}
TT^{(5)}\equiv& H+T^{(2)}T^{(4)}+T^{(4)}+4e\pmod{5}\\
\equiv&H+T^{(2)}+2T^{(6)}+3X_{3}^{(2)}+T^{(4)}+4e\pmod{5}\\
\equiv&X_{0}^{(2)}+2T^{(2)}+3T^{(6)}+4X_{3}^{(2)}+T^{(4)}+4e\pmod{5}\\
\equiv&(X_{0}^{(2)}\backslash\{e\})+2T^{(2)}+3T^{(6)}+4X_{3}^{(2)}+T^{(4)}\pmod{5}.
\end{split}
\end{equation}
Writing $TT^{(5)}=\sum_{i=0}^{L}iZ_i$, where $Z_i$, $i=0,1,\dots,L$ form a partition of group $H$. Then
\[4n^2=\sum_{i=1}^{L}i|Z_i|.\]
From Equation (\ref{eq14}), we can get
\begin{align*}
&\sum_{i\ge0}|Z_{5i+1}|=|X_{0}^{(2)}\backslash(\{e\}\cup T^{(4)})|,\\
&\sum_{i\ge0}|Z_{5i+2}|=|T^{(2)}|+|X_{0}^{(2)}\cap T^{(4)}|,\\
&\sum_{i\ge0}|Z_{5i+3}|=|T^{(6)}\backslash T^{(4)}|,\\
&\sum_{i\ge0}|Z_{5i+4}|=|X_{3}^{(2)}\backslash T^{(4)}|+|T^{(6)}\cap T^{(4)}|.
\end{align*}
Hence we obtain
\begin{align*}
4n^2=&\sum_{i=1}^{L}i|Z_i|\\
\ge&\sum_{i\ge0}|Z_{5i+1}|+2\sum_{i\ge0}|Z_{5i+2}|+3\sum_{i\ge0}|Z_{5i+3}|+4\sum_{i\ge0}|Z_{5i+4}|\\
=&|X_{0}^{(2)}\backslash(\{e\}\cup T^{(4)})|+2(|T^{(2)}|+|X_{0}^{(2)}\cap T^{(4)}|)+3|T^{(6)}\backslash T^{(4)}|+4(|X_{3}^{(2)}\backslash T^{(4)}|+|T^{(6)}\cap T^{(4)}|)\\
\ge&\frac{2}{3}n^2-2n+4n+6n+4(\frac{4}{3}n^2-2n)-8n\\
=&6n^2-8n.
\end{align*}
This leads to $2n^2-8n\le0$, and so $n\le4$, which is impossible.
\end{proof}
\subsection{Some sporadic cases}
For $n=3$, it has been pointed out in \cite{E11} that, there exists a linear $DPL(3,6)$ code. For $n=11$, we have the following result.
\begin{proposition}\label{prop8}
  There exists a linear $DPL(11,6)$ code.
\end{proposition}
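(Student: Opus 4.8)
The plan is to apply Theorem~\ref{groupring} with $n=11$ and then to realize the resulting group ring element as a classical partial difference set coming from the ternary Golay code. For $n=11$ one has $2n^2+1=243=3^5$, $2n=22$ and $2n-2=20$, so it suffices to exhibit a finite abelian group $H$ of order $243$ together with $T\subseteq H$ satisfying $|T|=22$, $T=T^{(-1)}$ and $T^2=2H-T^{(2)}+20e$. I would take $H$ to be the elementary abelian group $\mathbb{Z}_3^5$. In this group $g^2=g^{-1}$ for every $g$, so any $T$ with $T=T^{(-1)}$ automatically satisfies $T^{(2)}=T^{(-1)}=T$, and the required identity collapses to $TT^{(-1)}=2H-T+20e$; by the group ring form of a partial difference set (with $\gamma=k-\mu=22-2=20$), this says exactly that $T$ is a regular $(243,22,1,2)$ partial difference set in $\mathbb{Z}_3^5$. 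Thus the whole proposition reduces to producing such a partial difference set.

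Next I would construct the partial difference set from the perfect ternary Golay code $\mathcal{C}=[11,6,5]_3$. Identify $H$ with the quotient $\mathbb{F}_3^{11}/\mathcal{C}$, which is elementary abelian of order $3^{11-6}=3^5$. Since $\mathcal{C}$ has minimum distance $5$, the $22$ weight-one vectors $\pm e_1,\dots,\pm e_{11}$ lie in $22$ pairwise distinct nonidentity cosets, so $T:=\{\pm e_i+\mathcal{C}: 1\le i\le 11\}$ has $|T|=22$ and $T=T^{(-1)}$. To verify the partial difference set equation I would count, for a fixed nonidentity $h\in H$ (written additively), the pairs $(d_1,d_2)\in T\times T$ with $d_1-d_2=h$: using that $\mathcal{C}$ has covering radius $2$, both $d_1-d_2$ (a difference of two weight-one vectors) and the coset leader $h'$ of $h$ have weight $\le 2$, so $d_1-d_2-h'\in\mathcal{C}$ has weight $\le 4<5$ and must be $0$, forcing $d_1-d_2=h'$ in $\mathbb{F}_3^{11}$; a short sign/support case check then yields exactly one such pair when $h\in T$ and exactly two when $h\notin T$. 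Hence $T$ is a regular $(243,22,1,2)$ partial difference set — equivalently, the connection set of the Berlekamp--van Lint--Seidel strongly regular graph — and it supplies the desired $T$, so by Theorem~\ref{groupring} a linear $DPL(11,6)$ code exists.

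The only step requiring any care is the difference count, and it is light: it is driven entirely by the single fact that a nonzero codeword of $\mathcal{C}$ has weight at least $5$, which simultaneously forces the $22$ cosets to be distinct and pins down the multiplicities $\lambda=1$ and $\mu=2$. If one prefers to avoid the count, one may instead invoke the well-known fact that $\mathcal{C}^{\perp}=[11,5,6]_3$ is a projective two-weight code with nonzero weights $6$ and $9$ and appeal to the standard correspondence between projective two-weight codes over $\mathbb{F}_q$ and partial difference sets in $\mathbb{Z}_q^{k}$; either route produces the same $T$. In the write-up I would also record an explicit generator matrix of $\mathcal{C}$ (hence explicit coset representatives, and explicit $\phi(e_i)\in G$ for the group $G$ of order $486$), so that the constructed $DPL(11,6)$ code is completely concrete.
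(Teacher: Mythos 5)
Your proof is correct, and at the structural level it aims at the same target as the paper: both reduce, via Theorem~\ref{groupring}, to exhibiting a regular $(243,22,1,2)$ partial difference set in an elementary abelian group of order $243$ (the Berlekamp--van Lint--Seidel connection set). The difference lies in how that set is produced and verified. The paper takes $H=(\mathbb{F}_{3^5},+)$ and $T=\{g^{11i}: i\in[0,21]\}$, the multiplicative subgroup of order $22$, and simply asserts the identity $T^2=2H-T+(2n-2)e$ without proof; you instead realize the set as the cosets of $\pm e_1,\dots,\pm e_{11}$ in $\mathbb{F}_3^{11}/\mathcal{C}$ for the ternary Golay code $\mathcal{C}=[11,6,5]_3$, and your verification is genuinely self-contained: minimum distance $5$ makes the $22$ cosets distinct and forces any weight-at-most-$4$ vector of $\mathcal{C}$ to vanish, covering radius $2$ supplies the unique coset leader of weight at most $2$, and the sign/support count then gives $\lambda=1$, $\mu=2$. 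Your preliminary reduction is also sound: in $\mathbb{Z}_3^5$ one has $g^2=g^{-1}$, so with $T=T^{(-1)}$ condition (3) of Theorem~\ref{groupring} collapses to the partial difference set equation with $\gamma=k-\mu=20$ (and $e\notin T$ since no nonzero vector of weight one is a codeword). What the paper's choice buys is brevity and an explicitly parametrized $T$ inside one finite field; what yours buys is an actual proof of the group ring identity, which the paper leaves to a cyclotomic or machine computation, at the modest cost of importing standard facts about the Golay code (or, in your alternative phrasing, the two-weight property of its dual). If you keep your version, do write out the short sign/support case analysis explicitly, since that count is the only step where the multiplicities could silently go wrong.
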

\begin{proof}
  Let $H=(\mathbb{F}_{3^5},+)$, and $g$ be a primitive element of finite field $\mathbb{F}_{3^5}$. Let $T=\{g^{11i}: i\in[0,21]\}\subset H$, then $T=T^{(-1)}$, $T=T^{(2)}$ and $T^2=2H-T+(2n-2)e$. By Theorem~\ref{groupring}, there exists a linear $DPL(11,6)$ code.
\end{proof}
For $n=5$ or 8, the nonexistence of $DPL(n,6)$ code follows from following proposition.
\begin{proposition}\label{prop9}
If $n\equiv5,8\pmod{9}$ and $4n^2+2$ is a squarefree integer, then there does not exist a linear $DPL(n,6)$ code.
\end{proposition}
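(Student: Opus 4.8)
The plan is to assume, for contradiction, that a linear $DPL(n,6)$ code exists, move to the group‑ring picture of Theorem~\ref{groupring} to obtain a finite abelian group $H$ with $|H|=2n^2+1$ together with a subset $T\subseteq H$ satisfying conditions (1)--(3), and then evaluate condition (3) at a character of order $3$ to force a numerical impossibility. First I would record the relevant congruences: $n\equiv 5,8\pmod 9$ implies $n\equiv 2\pmod 3$, so $3\mid 2n^2+1=|H|$; and $2n^2+1\equiv 6\pmod 9$ when $n\equiv 5$, while $2n^2+1\equiv 3\pmod 9$ when $n\equiv 8$, so $9\nmid|H|$ and the Sylow $3$‑subgroup of $H$ is exactly $\mathbb{Z}_3$. (The squarefreeness of $4n^2+2$ makes $H$ cyclic via the structure theorem, but the argument below only needs $3\mid|H|$.) Consequently $\widehat H$ contains a character $\chi$ of order $3$, and $\chi(h)$ is a cube root of unity for every $h\in H$.

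The core of the argument is a single evaluation of condition (3). Applying $\chi$ to $T^2=2H-T^{(2)}+(2n-2)e$: since $\chi$ is nontrivial, $\chi(H)=0$; since $\chi$ has order $3$, $\chi(h^2)=\chi(h)^2=\overline{\chi(h)}$ for all $h$, so $\chi(T^{(2)})=\overline{\chi(T)}$; and since $T=T^{(-1)}$ the number $\chi(T)$ is real, so $\chi(T^{(2)})=\chi(T)$. Hence
\[
\chi(T)^2+\chi(T)-(2n-2)=0.
\]
Now $\chi(T)$ is a sum of $2n$ cube roots of unity, so it is a real algebraic integer in $\mathbb{Z}[\omega]\cap\mathbb{R}=\mathbb{Z}$, where $\omega$ is a primitive cube root of unity. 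Thus $x:=\chi(T)$ is an integer root of $x^2+x-(2n-2)$, which forces $8n-7=(2x+1)^2$ to be a perfect square.

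The final step will be the arithmetic obstruction, and it is immediate: $8n-7\equiv 6\pmod 9$ if $n\equiv 5\pmod 9$, and $8n-7\equiv 3\pmod 9$ if $n\equiv 8\pmod 9$; since the quadratic residues modulo $9$ are $\{0,1,4,7\}$, in neither case can $8n-7$ be a square, a contradiction. This disposes of the outstanding cases $n=5,8$ left open in Proposition~\ref{prop2}, because $4\cdot 5^2+2=2\cdot 3\cdot 17$ and $4\cdot 8^2+2=2\cdot 3\cdot 43$ are squarefree. I expect essentially no obstacle here: the only points needing care are the justification that $\chi(T)\in\mathbb{Z}$ and the bookkeeping of $n$ and $8n-7$ modulo $9$. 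The conceptual content is simply that an order‑$3$ character collapses condition (3) to a quadratic whose solvability in $\mathbb{Z}$ is controlled by $n\bmod 9$, which is exactly what the counting arguments of Propositions~\ref{prop1} and~\ref{prop2} could not see.
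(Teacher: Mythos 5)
Your proof is correct, but it follows a genuinely different route from the paper. The paper stays on the $\mathbb{Z}^n$-tiling side: it uses the squarefreeness of $4n^2+2$ to force the group in Theorem~\ref{tilinggroup} to be cyclic, writes out the images of the double Lee sphere in $\mathbb{Z}_{4n^2+2}$, and compares the sum of their squares with $\sum_{i=1}^{4n^2+1}i^2$ modulo $3$ (the left side is $\equiv 0$, the right side is not, using $n\equiv 5,8\pmod 9$ to ensure $3\,\|\,4n^2+2$). You instead work with the group-ring identity of Theorem~\ref{groupring} and evaluate it at a character $\chi$ of order $3$, which exists because $3\mid 2n^2+1$; the key observations that $\chi(T^{(2)})=\overline{\chi(T)}=\chi(T)$ (using $T=T^{(-1)}$) and $\chi(T)\in\mathbb{Z}[\omega]\cap\mathbb{R}=\mathbb{Z}$ are sound, so condition (3) collapses to $x^2+x-(2n-2)=0$ with $x\in\mathbb{Z}$, forcing $8n-7=(2x+1)^2$, which is impossible modulo $9$ for $n\equiv 5,8\pmod 9$ since $8n-7\equiv 6,3\pmod 9$ and the squares modulo $9$ are $\{0,1,4,7\}$. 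What each approach buys: the paper's argument is completely elementary (a weighted counting modulo $3$) but genuinely needs the squarefree hypothesis to reduce to a cyclic group, whereas your character argument never uses squarefreeness, so it proves the stronger statement that no linear $DPL(n,6)$ code exists for any $n\equiv 2\pmod 3$ with $8n-7$ not a perfect square (in particular all $n\equiv 5,8\pmod 9$); it also dovetails with the eigenvalue/partial-difference-set viewpoint the paper invokes in Case 5 of Proposition~\ref{prop2}, where the same quantity $\Delta=8n-7$ appears, and it is consistent with the existing examples $n=3$ (no order-$3$ character constraint applies in the same way, as $3\nmid 2\cdot 9+1$) and $n=11$ (where $8n-7=81$ is a square).
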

\begin{proof}
Let $e_i$, $i=1,2,\dots,n$, be fixed orthonormal basis of $\mathbb{Z}^{n}$.
  By Theorem~\ref{tilinggroup}, there exists a linear $DPL(n,6)$ code if and only if there is a homomorphism $\phi:\mathbb{Z}^n\rightarrow \mathbb{Z}_{4n^2+2}$ (the group $\mathbb{Z}_{4n^2+2}$ is written additively) such that the restriction of $\phi$ to $DS_{n,2}(0,e_1)$ is a bijection. Since the homomorphism $\phi$ is determined by the values $\phi(e_i)$, $i=1,\dots,n$, then there exists a linear $DPL(n,6)$ code if and only if there exists an $n$-subset $\{a_1,a_2,\dots,a_n\}\subset \mathbb{Z}_{4n^2+2}$ (let $\phi(e_i)=a_i$) such that
  \begin{align*}
  \mathbb{Z}_{4n^2+2}=&\{0,\pm a_{1},\pm2a_{1},3a_{1}\}\cup\{\pm a_{i},\pm a_{1}\pm a_{i},2a_{1}\pm a_{i},\pm2a_{i},a_1\pm2a_{i}:\ 2\le i\le n\}\\
  &\cup\{\pm a_{i}\pm a_{j},a_{1}\pm a_{i}\pm a_{j}:\ 2\le i<j\le n\}.
  \end{align*}
  Then we have
  \begin{equation}\label{eq15}
  \begin{split}
  &(\pm a_{1})^2+(\pm2a_{1})^2+(3a_{1})^2+\sum_{i=2}^{n}((\pm a_{i})^2+(\pm a_{1}\pm a_{i})^2+(2a_{1}\pm a_{i})^2+(\pm2a_{i})^2\\
  &+(a_1\pm2a_{i})^2)+\sum_{2\le i<j\le n}((\pm a_{i}\pm a_{j})^2+(a_{1}\pm a_{i}\pm a_{j})^2)\equiv\sum_{i=1}^{4n^2+1}i^2\pmod{4n^2+2}.
  \end{split}
  \end{equation}
  The left hand side of Equation~(\ref{eq15}) can be computed as
  \begin{align*}
  &(\pm a_{1})^2+(\pm2a_{1})^2+(3a_{1})^2+\sum_{i=2}^{n}((\pm a_{i})^2+(\pm a_{1}\pm a_{i})^2+(2a_{1}\pm a_{i})^2+(\pm2a_{i})^2+(a_1\pm2a_{i})^2)\\
  &+\sum_{2\le i<j\le n}((\pm a_{i}\pm a_{j})^2+(a_{1}\pm a_{i}\pm a_{j})^2)\\
  =&(2n^2+8n+9)a_{1}^{2}+(8n+8)\sum_{i=2}^{n}a_{i}^{2}\\
  \equiv&0\pmod{3}.
  \end{align*}
  For the right hand side of Equation~(\ref{eq15}), we have
   \[\sum_{i=1}^{4n^2+1}i^2=\frac{(4n^2+1)(4n^2+2)(8n^2+3)}{6}\not\equiv0\pmod{3}.\]
  This contradicts to $4n^2+2\equiv0\pmod{3}$.
\end{proof}

Theorem~\ref{mainthm} is a combination of Propositions \ref{prop1}, \ref{prop2}, and \ref{prop3}---\ref{prop9}.

\section*{Acknowledgements}
Gennian Ge was partially supported by the National Key Research and Development Program of China under Grant 2020YFA0712100 and Grant 2018YFA0704703, the National Natural Science Foundation of China under Grant 11971325 and Grant 12231014, and Beijing Scholars Program.

\end{document}